\newtheorem{theorem}{Theorem}[section]
\newtheorem{lemma}[theorem]{Lemma}
\newtheorem{proposition}[theorem]{Proposition}
\newtheorem{conjecture}[theorem]{Conjecture}
\theoremstyle{definition}
\newtheorem{definition}[theorem]{Definition}
\newtheorem{remark}[theorem]{Remark}
\newtheorem{example}[theorem]{Example}
\def\P{{\mathbb P}}
\def\cE{{\mathcal E}}
\def\cM{{\mathcal M}}
\def\cN{{\mathcal{N}}}
\def\cO{{\mathcal{O}}}
\def\cS{{\mathcal S}}
\def\cOperatorname#1{\mathop{\rm #1}\nolimits}
\def\codim{\cOperatorname{codim}}
\def\deg{\cOperatorname{deg}}
\def\det{\cOperatorname{det}}
\def\ME{{\cOperatorname{ME}}}
\newcommand{\cME}[1]{\cOverline{\ME}}
\title{{Smooth Fano varieties with pseudoindex equal to half of their dimension}}
\author{Kiwamu Watanabe}
\date{\today}
\address{Department of Mathematics, Faculty of Science and Engineering, Chuo University.
1-13-27 Kasuga, Bunkyo-ku, Tokyo 112-8551, Japan}
\email{watanabe@math.chuo-u.ac.jp}
\thanks{The author is partially supported by JSPS KAKENHI Grant Number 21K03170 and 25K06940.}
\subjclass[2020]{14J40, 14J45, 14E30.}
\keywords{Fano varieties, pseudoindex, extremal contractions}
\begin{document}

\maketitle

\begin{abstract} Let $X$ be a complex smooth Fano variety of dimension $n$. Assume that $X$ admits a birational contraction of an extremal ray. In this paper, we give a classification of such $X$ when the pseudoindex is equal to $\frac{\dim X}{2}$. 
\end{abstract}

\section{Introduction}
Let $X$ be a complex $n$-dimensional smooth projective variety. We call a variety $X$ {\it Fano} if the anticanonical divisor $-K_X$ is ample. Fano varieties (with mild singularities) are considered one of the three fundamental building blocks of algebraic varieties in birational geometry. Two important invariants arise in studying {smooth} Fano varieties: the largest integer that divides $-K_X$ in the Picard group of $X$, which is known as the {\it Fano index} of $X$. The second is the minimum anticanonical degree of rational curves on $X$, called the {\it pseudoindex} of $X$. The Fano index of $X$ and the pseudoindex of $X$ are denoted by ${ \rm i_X}$ and $\iota_X$, respectively.
Historically, {smooth} Fano varieties have been studied primarily from the perspective of the Fano index. It is known that ${ \rm i_X} \leq n+1$ and that {smooth} Fano varieties with ${ \rm i_X} \geq n-2$ can be classified (see, for example, \cite{Isk}). By contrast, addressing similar problems for the pseudoindex is considerably more challenging. For instance, it follows from Mori's bend-and-break lemma that $\iota_X \leq n+1$. Moreover, K. Cho, Y. Miyaoka, and N. I. Shepherd-Barron \cite{CMSB} proved that when $\iota_X = n+1$, $X$ is isomorphic to projective space $\P^n$. Additionally, T. Dedieu and A. H\"oring \cite{DH17} proved that when $\iota_X = n$, $X$ is isomorphic to a smooth quadric hypersurface $Q^n$. The author \cite{Wat24} classified {smooth} Fano varieties with Picard number $\rho_X > 1$ when $\iota_X = n-1$ and $\iota_X = n-2$. However, unlike the case of the Fano index, the classification of varieties with $\iota_X = n-1$ and $\iota_X = n-2$ is still unresolved when $\rho_X = 1$.
As a broader geometric problem concerning general {smooth} Fano varieties, we state the following generalized Mukai conjecture:

\begin{conjecture}[\cite{BCDD03}]\label{conj:GM} For an $n$-dimensional smooth Fano variety $X$, we have 
$$
\rho_X(\iota_X-1)\leq n.
$$ Furthermore, the equality holds if and only if $X$ is isomorphic to a product of $\rho_X$ projective spaces, $(\P^{\iota_X-1})^{\rho_X}$.
\end{conjecture}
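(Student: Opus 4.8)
The statement of Conjecture~\ref{conj:GM} has two parts: the numerical inequality $\rho_X(\iota_X-1)\le n$ and the rigidity assertion identifying the equality case. The plan is to establish the inequality first, by induction on the Picard number $\rho_X$, and then to bootstrap the classification results recalled above to pin down the extremal case. When $\rho_X=1$ the inequality reads $\iota_X\le n+1$, which is exactly the bound furnished by Mori's bend-and-break lemma; this is the base case and requires nothing new.

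For the inductive step I would fix an extremal ray $R$ of the Mori cone $\overline{\NE(X)}$ and its contraction $\phi\colon X\to Y$. The length of $R$---the minimal anticanonical degree among the rational curves it contracts---is at least $\iota_X$, and Wi\'sniewski's inequality bounds the fiber dimension of $\phi$ from below in terms of this length. When $\phi$ is of fiber type, a general fiber $F$ is a smooth Fano variety on which $-K_F$ coincides with the restriction of $-K_X$, so every rational curve in $F$ has anticanonical degree at least $\iota_X$ and hence $\iota_F\ge\iota_X$; one then applies the inductive hypothesis to $F$ and to the base $Y$ and combines the outcomes with the standard relations among $\rho_X,\rho_Y$ and $\dim F,\dim Y$. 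The arithmetic must be arranged so that the surplus $\iota_X-1$ contributed by the new ray is absorbed by the fiber dimension $\dim F\ge\iota_X-1$; already here the subtle point is that the pseudoindex of the base may drop, so the induction does not close purely formally and needs geometric input.

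The genuine obstacle, and the reason the conjecture is hard in full generality, is the configuration in which \emph{every} extremal contraction is birational. Then the exceptional locus is a proper subvariety rather than a fibration, so one cannot transport the pseudoindex onto a Fano variety of smaller dimension, and the dimension count collapses: a birational contraction spends a unit of Picard number without lowering $\dim X$. Handling this case demands a detailed study of the exceptional divisor in the divisorial case, or of the flipping locus in the small case, together with the normal-bundle geometry of the contracted minimal rational curves. This is precisely the configuration that the present paper isolates and resolves under the hypothesis $\iota_X=\tfrac{1}{2}\dim X$, and I expect it to be the decisive step.

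For the equality case $\rho_X(\iota_X-1)=n$ the strategy is to show that the bound is attained only when the induction is tight at every stage: each extremal contraction must be of fiber type with general fiber of dimension exactly $\iota_X-1$ and pseudoindex at least $\iota_X=\dim F+1$. By the theorem of Cho--Miyaoka--Shepherd-Barron such a fiber must be $\P^{\iota_X-1}$, so $X$ acquires $\rho_X$ independent $\P^{\iota_X-1}$-bundle structures; a final splitting argument then identifies the commuting projections and realizes $X$ as $(\P^{\iota_X-1})^{\rho_X}$. The crux of this last part is again to exclude any birational contraction, since such a contraction would break tightness, so the equality analysis rests on the same hard case as the inequality.
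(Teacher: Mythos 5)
There is a fundamental mismatch here: the statement you are trying to prove is stated in the paper as a \emph{conjecture} (the generalized Mukai conjecture of \cite{BCDD03}), and the paper offers no proof of it --- indeed it is an open problem, known only in special ranges such as large pseudoindex (e.g.\ $\iota_X \geq \frac{n+2}{3}$, cf.\ the result of Novelli cited in the paper) and low dimension. The paper's actual theorem is a much more restricted classification: smooth Fano varieties with $\iota_X = \frac{n}{2}$ that admit a birational elementary contraction. So the correct ``comparison'' is that there is nothing in the paper to compare against, and your proposal, read as a proof, has genuine gaps --- which you yourself flag.

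Concretely, the gaps are these. First, in the inductive step for a fiber-type contraction $\phi\colon X \to Y$, the base $Y$ need not be smooth, need not be Fano, and even when it is a smooth Fano variety its pseudoindex can drop below $\iota_X$ (this failure is exactly why Lemma~\ref{lem:Fano:bundle} in the paper is restricted to $\P^r$-bundles, where the base is known to be smooth Fano with $\iota_Z \geq \iota_X$); so the inductive hypothesis applied to $Y$ yields a bound in terms of $\iota_Y$, not $\iota_X$, and the induction does not close --- you acknowledge this but do not repair it. Second, the case where every elementary contraction of $X$ is birational is left entirely open: you call it ``the decisive step'' and ``expect'' the paper's methods to handle it, but the paper only treats this configuration under the strong hypothesis $\iota_X = \frac{n}{2}$, and its arguments (e.g.\ the dimension counts via Proposition~\ref{prop:Ion:Wis} and Proposition~\ref{prop:LocMY:dim}, which force $\dim F = \iota_X + 1$ and $\codim E = 2$) use that hypothesis essentially; they do not generalize to arbitrary pseudoindex. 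Third, the equality analysis inherits both problems: identifying $X$ as $(\P^{\iota_X-1})^{\rho_X}$ requires precisely the unproven exclusion of birational rays plus a splitting argument that is itself nontrivial (this is the content of Occhetta's theorem in the case $\rho_X = 2$, $\iota_X = \frac{n}{2}+1$, not a formality). A proof sketch that defers its hardest case to ``future geometric input'' is not a proof; as it stands, the proposal establishes the conjecture only in cases already covered by the literature the paper cites.
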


S. Mukai's original conjecture \cite[Conjecture~4]{Mukai88}, now known as the Mukai conjecture, replaces the pseudoindex with the Fano index in the above statement. The generalized Mukai conjecture is partially motivated by the following theorem of J. Wi\'sniewski:

\begin{theorem}[{\cite{Wis90b}}]
\label{them:Wis}
Let $X$ be an $n$-dimensional smooth Fano variety. If $\iota_X > \frac{n}{2} + 1$, then $\rho_X = 1$. Moreover, if ${ \rm i_X} = \frac{n}{2} + 1$ and $\rho_X > 1$, then $X$ is isomorphic to a product of two projective spaces, $(\P^{{\rm i_X} - 1})^2$.
\end{theorem}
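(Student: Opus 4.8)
The plan is to run Mori theory, the decisive input being Wi\'sniewski's inequality on fibre dimensions of an extremal contraction. For the contraction $\varphi_R\colon X\to Y$ of an extremal ray $R$ of length $\ell(R)$, write $E=\Loc(R)\subseteq X$ for the locus swept out by curves with class in $R$; then every nontrivial fibre $F$ of $\varphi_R$ satisfies $\dim E+\dim F\ge n+\ell(R)-1$, and in particular $\dim F\ge\ell(R)-1\ge\iota_X-1$, since $\ell(R)\ge\iota_X$ for every ray. Two further facts will be used constantly: a curve lying in a fibre of $\varphi_R$ has numerical class in $R$, so two fibres belonging to distinct rays $R_1\ne R_2$ can share no curve (otherwise $R_1=R_2$) and hence meet in a finite set; and the restriction of $\varphi_{R_2}$ to a fibre $F_1$ of $\varphi_{R_1}$ is a finite morphism, because it contracts no curve. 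The key geometric lemma I will need is that on the Fano manifold $X$ two extremal loci $E_1,E_2$ with $\dim E_1+\dim E_2>n$ necessarily intersect; this rests on the finer deformation theory of the extremal rational curves sweeping $E_1,E_2$ (the same circle of ideas behind Wi\'sniewski's inequality), not on naive dimension counting, which fails on a general projective variety.

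Granting the lemma, the first assertion is quick. Suppose $\rho_X\ge 2$ and choose distinct extremal rays $R_1\ne R_2$. If $\iota_X>\tfrac n2+1$, then every nontrivial fibre has dimension $\ge\iota_X-1>\tfrac n2$; combined with $\dim F_i\le\dim E_i$ and Wi\'sniewski's inequality this gives $2\dim E_i\ge n+\iota_X-1>\tfrac{3n}{2}$, so $\dim E_1+\dim E_2>n$ and the loci meet. Pick $x\in E_1\cap E_2$ and let $F_i$ be the nontrivial fibre of $\varphi_{R_i}$ through $x$; then $x\in F_1\cap F_2$, so this intersection is nonempty and finite, forcing $\dim F_1+\dim F_2\le n$. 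Hence $2(\iota_X-1)\le n$, i.e. $\iota_X\le\tfrac n2+1$, a contradiction; therefore $\rho_X=1$. (The finiteness of $\varphi_{R_2}|_{F_1}$ shows, moreover, that no contraction can be of fibre type once a second ray is present, which is consistent with the above.)

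For the second assertion, note first that $\iota_X\ge{\rm i_X}$, since ${\rm i_X}$ divides $-K_X$ in $\Pic(X)$ and $H\cdot C\ge 1$ for the ample generator $H$ and any curve $C$; thus ${\rm i_X}=\tfrac n2+1$ gives $\iota_X\ge\tfrac n2+1$, and if the inequality were strict the first part would force $\rho_X=1$, against hypothesis. So $\iota_X=\tfrac n2+1$, and re-running the estimate above with this value turns every inequality into an equality: for every pair of distinct rays the fibres through a common point satisfy $\dim F_1+\dim F_2=n$ with $\dim F_i=\iota_X-1=\tfrac n2$, which forces $\ell(R_i)=\iota_X$ and, through Wi\'sniewski's inequality, $\dim E_i=n$. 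Thus every extremal contraction is of fibre type and equidimensional with all fibres of dimension $\tfrac n2$. A general fibre $F$ is then a smooth Fano variety of dimension $\tfrac n2$ with $-K_F=-K_X|_F$, so each of its rational curves has anticanonical degree $\ge\iota_X=\tfrac n2+1=\dim F+1$; hence $\iota_F=\dim F+1$ and, by \cite{CMSB}, $F\cong\P^{n/2}$.

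It remains to assemble the product. Fix two distinct rays and consider $\Phi=(\varphi_1,\varphi_2)\colon X\to Y_1\times Y_2$; its fibres are the finite sets $\varphi_1^{-1}(y_1)\cap\varphi_2^{-1}(y_2)$, so $\Phi$ is finite, and since $\dim(Y_1\times Y_2)=\tfrac n2+\tfrac n2=n$ it is finite and surjective. Restricting $\varphi_2$ to a general fibre $F_1\cong\P^{n/2}$ gives a finite surjection $\P^{n/2}\to Y_2$ (no curve of $F_1$ is contracted, its class lying in $R_1\ne R_2$); using that such a minimal, equidimensional fibre-type contraction is a projective bundle over a smooth base, $Y_2$ is a smooth $\tfrac n2$-fold dominated finitely by $\P^{n/2}$, whence $Y_2\cong\P^{n/2}$ by Lazarsfeld's solution of the Remmert--Van de Ven problem, and likewise $Y_1\cong\P^{n/2}$. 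Finally a Riemann--Hurwitz computation on the minimal rational curves shows $\Phi$ is unramified, hence --- $\P^{n/2}\times\P^{n/2}$ being simply connected --- an isomorphism, giving $X\cong(\P^{n/2})^2=(\P^{{\rm i_X}-1})^2$ and $\rho_X=2$. I expect the main obstacles to be precisely (a) the intersection lemma of the first paragraph and (b) this last step, namely promoting the finite surjection $\Phi$ to an isomorphism and pinning down the bases $Y_i$, for which the crude dimension count must be replaced by the structure theory of minimal extremal contractions.
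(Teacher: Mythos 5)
The paper does not prove Theorem~\ref{them:Wis} at all --- it is quoted from \cite{Wis90b} --- so your attempt can only be measured against the standard argument, which is also the mechanism this paper uses for its own results (see the proof of Theorem~\ref{them:main1}). Measured that way, there is a genuine gap, and it sits exactly where you flagged it: your ``key geometric lemma'' --- that two extremal loci $E_1,E_2$ on a Fano manifold with $\dim E_1+\dim E_2>n$ must meet --- is false, and you never prove it, you only ``grant'' it. Counterexample: let $X=\P^1\times S$, where $S=Bl_{\{p,q\}}(\P^2)$ is the degree-seven del Pezzo surface; $X$ is a smooth Fano threefold, and the two disjoint $(-1)$-curves of $S$ give two divisorial extremal rays of $X$ whose exceptional loci $\P^1\times E_1$ and $\P^1\times E_2$ are disjoint surfaces, so the dimension sum is $4>3=n$. (The blow-up of $\P^3$ along two disjoint lines is another example.) Since both halves of your proof begin by producing a point of $E_1\cap E_2$ from this lemma, the argument collapses at its first step. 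Nor can the lemma be rescued ``under the stronger numerics'': the only reason it holds in the relevant regime is that $\iota_X>\frac n2+1$ forces \emph{every} elementary contraction to be of fiber type (so that $E_i=X$ and intersection is trivial), and proving that is precisely the content one cannot obtain by dimension counts on exceptional loci.

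The standard repair replaces the lemma by a covering family, which makes the intersection automatic. Take a minimal dominating family $\cM$; if $\cM$ were not unsplit, Lemma~\ref{lem:degeneration:curves} would give $\deg_{(-K_X)}\cM\geq 2\iota_X>n+1$, contradicting Theorem~\ref{them:fano:min:fam}(i), so $\cM$ is unsplit and $\Loc(\cM)=X$. If $\rho_X\geq 2$, choose an extremal ray $R$ with $[\cM]\notin R$ and a nontrivial fiber component $F$ of its contraction; $F$ meets $\Loc(\cM)=X$ trivially, curves in $F$ lie in $R$ and are therefore numerically independent of $\cM$-curves, and Proposition~\ref{prop:LocMY:dim} yields $n\geq\dim\Loc(\cM)_F\geq\dim F+\iota_X-1$. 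Combined with $\dim F\geq\ell(R)-1\geq\iota_X-1$ (and $\dim F\geq\iota_X$ if $R$ is birational) from Proposition~\ref{prop:Ion:Wis}, this is impossible when $2\iota_X>n+2$, so every extremal ray contains $[\cM]$ and $\rho_X=1$; when $\iota_X=\frac n2+1$ the same estimate kills birational rays and forces every other ray to be fiber type, equidimensional with $\dim F=\frac n2=\ell(R)-1$, hence a $\P^{n/2}$-bundle by Theorem~\ref{them:HN13}. Separately, your endgame for the second statement is also incomplete: the claim that Riemann--Hurwitz shows $\Phi=(\varphi_1,\varphi_2)$ is unramified is not carried out, and it amounts to showing that lines in fibers of $\varphi_1$ map to lines in $Y_2$ --- essentially the assertion to be proved, since a finite self-map of $\P^{n/2}$ of degree $>1$ is not excluded by positivity of the ramification divisor alone. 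The clean finish is the one the paper uses in Proposition~\ref{prop:bundle:iotaZ}: once $\psi\colon X\to Y_2\cong\P^{n/2}$ is a $\P^{n/2}$-bundle, write $X=\P(\cE)$, use Lemma~\ref{lem:Fano:bundle}(ii) to see that $\cE$ is uniform with balanced splitting type on lines, and conclude from \cite{AW01} that $\cE$ splits, giving $X\cong\P^{n/2}\times\P^{n/2}$ directly.
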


In the latter part of this theorem, G. Occhetta \cite[Theorem~1.1]{Occ06} extended the result by replacing the Fano index with the pseudoindex. For the case where ${ \rm i_X} = \frac{n+1}{2}$ and $\rho_X > 1$, the classification of $n$-dimensional {smooth} Fano varieties was obtained by Wi\'sniewski \cite{Wis91ind}. The extension of this classification from the Fano index to the pseudoindex was provided by the author \cite{Wat24M}. Additionally, in the next case, $n$-dimensional {smooth} Fano varieties with ${ \rm i_X} = \frac{n}{2}$ and $\rho_X > 1$ were classified in a series of papers by Wi\'sniewski et al. \cite{PSW92, Wis93, BW96, Wis94}.

This paper aims to study $n$-dimensional {smooth} Fano varieties $X$ with ${\iota}_X = \frac{n}{2}$ and $\rho_X > 1$ when $X$ admits a birational contraction of an extremal ray. {In the following, for a smooth projective variety $X$ and its smooth projective closed subvariety $Y$, we denote by $Bl_Y X$ the blow-up of $X$ along $Y$.}

\begin{theorem}\label{them:main} Let $X$ be an $n$-dimensional smooth Fano variety with $\iota_X=\frac{n}{2}\geq 2$. Assume that $X$ admits a birational contraction $\varphi: X\to Y$ of an extremal ray. Then $X$ is isomorphic to one of the following:
\begin{enumerate}
\item {$Bl_{\P^{\frac{n}{2} - 1}}(\P^n)$ with a linear subvariety $\P^{\frac{n}{2} - 1}$;} 
\item {$Bl_{\P^{{\frac{n}{2} - 1}}}(Q^n)$ with a linear subvariety $\P^{\frac{n}{2} - 1}$;}
\item {$Bl_{Q^{{\frac{n}{2} - 1}}}(Q^n)$ with a smooth quadric $Q^{{\frac{n}{2} - 1}}$ of $Q^n$ not contained in a linear subspace of $Q^n$;}
\item {$Bl_{\P^{\frac{n}{2} - 2}}(\P^n)$ with a linear subspace $\P^{\frac{n}{2} - 2}$;}
\item $\P(\cO_{\P^{\frac{n}{2}+1}}(2)\oplus\cO_{\P^{\frac{n}{2}+1}}^{\oplus \frac{n}{2}-1})$;
\item $\P(\cO_{Q^{\frac{n}{2}+1}}(1)\oplus\cO_{Q^{\frac{n}{2}+1}}^{\oplus \frac{n}{2}-1})$;
\item $\P(\cO_{\P^{\frac{n}{2}+1}}^{\oplus 2}(1)\oplus\cO_{\P^{\frac{n}{2}+1}}^{\oplus \frac{n}{2}-2})$;
\item $\P^1\times \P(\cO_{\P^2}(1)\oplus\cO_{\P^2})$.
\end{enumerate}
{If $X$ is as in {\rm (i)}, {\rm (iv)}, or {\rm (viii)}, then $ \rm i_X=1$; otherwise ${\rm i_X} = \iota_X$.}
\end{theorem}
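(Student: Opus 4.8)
The plan is to study the given elementary birational contraction $\varphi\colon X\to Y$ of the extremal ray $R$, derive numerical constraints on its fibers, match these against the structure theory of Fano--Mori contractions, and then globalize. Since $X$ is Fano and $\varphi$ is elementary, $\rho_Y=\rho_X-1$ and hence $\rho_X\ge 2$, so $X$ carries at least one further extremal ray, which will be essential. Write $\ell=\ell(R)$ for the length of $R$; by definition of the pseudoindex, $\ell\ge\iota_X=\frac n2$. Let $E$ be the exceptional locus of $\varphi$ and $F$ an irreducible component of a nontrivial fiber. The Ionescu--Wi\'sniewski inequality $\dim E+\dim F\ge n+\ell-1$, combined with $\dim E\le n-1$ (as $\varphi$ is birational), forces $\dim F\ge \ell\ge\frac n2$: every nontrivial fiber is at least half-dimensional.

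The first main step is to pin down the local structure of $\varphi$. I would split according to whether $\varphi$ is divisorial ($\dim E=n-1$) or small ($\dim E\le n-2$), and according to whether the general fiber has minimal dimension $\dim F=\ell$ or is strictly larger. In the divisorial, minimal-fiber case, the structure theorems for contractions whose fiber dimension equals the length (going back to Ando and Wi\'sniewski, and refined by Andreatta--Occhetta) should give that $\varphi$ is the blow-up of a smooth subvariety $Z\subset Y$ of codimension $\ell+1$, with $Y$ smooth and $E=\P(N_{Z/Y})\to Z$ a $\P^{\ell}$-bundle. The remaining subcases --- small contractions, and divisorial contractions with $\dim F>\ell$ --- I would analyze by bounding $\dim\varphi(E)$ and using that each fiber $F$ is a Fano variety which, by \cite{CMSB} and \cite{DH17}, must be $\P^{\dim F}$ or $Q^{\dim F}$; these subcases should either be excluded or be exactly the ones that later produce the projective-bundle models.

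The second main step is globalization. Once $\varphi$ is a smooth blow-up, $Y$ is a smooth projective variety, and from $-K_X=\varphi^*(-K_Y)-\ell\,E$ together with the requirement that the minimal anticanonical degree on $X$ equal $\frac n2$, I would determine the numerical invariants of $Y$ and of the center $Z$. Analyzing these minimal degrees and invoking the maximal-pseudoindex classifications (\cite{CMSB} for $\iota=n+1$, \cite{DH17} for $\iota=n$) should identify $Y$ as $\P^n$ or $Q^n$ and force $Z$ to be a linear $\P^{\frac n2-1}$, a linear $\P^{\frac n2-2}$, or a smooth quadric $Q^{\frac n2-1}$ not contained in a linear subspace, yielding cases (i)--(iv). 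For the configurations not of smooth-blow-up type, I would instead exploit the complementary extremal ray: the pseudoindex bound $\iota_X=\frac n2$ with $\rho_X\ge2$, via Theorem~\ref{them:Wis} and Occhetta's theorem \cite{Occ06}, should produce a fiber-type contraction realizing $X$ as a projective bundle over $\P^{\frac n2+1}$ or $Q^{\frac n2+1}$; recovering the bundle from the contracted section gives cases (v)--(vii). The boundary dimension $n=4$ (where $\iota_X=2$ and the generalized-Mukai bound permits $\rho_X=3$) falls outside the generic numerics and must be handled directly, producing the product $\P^1\times\P(\cO_{\P^2}(1)\oplus\cO_{\P^2})$ of (viii).

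Finally, the Fano index statement is a model-by-model computation in $\Pic X$: writing $-K_X$ in terms of the natural generators --- pullbacks of the hyperplane or quadric class, the exceptional divisor, and the relative tautological class --- I would check that $-K_X$ is primitive precisely in cases (i), (iv), (viii), giving ${\rm i_X}=1$, and that $-K_X=\iota_X\,H$ for a primitive $H$ otherwise, giving ${\rm i_X}=\iota_X$. I expect the main obstacle to be the local-structure step together with the identification of the non-blow-up models: ruling out small contractions and divisorial contractions with oversized fibers, and verifying that the second-ray analysis produces exactly the bundles in (v)--(vii) (and the product in (viii)) and no others, is where the classification can fail to be clean; by contrast, once $\varphi$ is known to be a smooth blow-up the determination of $Y$ and the index computations are comparatively routine.
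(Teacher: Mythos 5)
Your overall architecture does mirror the paper's (split according to the type of the birational contraction, produce either a smooth-blow-up structure for cases (i)--(iv) or a $\P^{\frac n2-1}$-bundle over $\P^{\frac n2+1}$ or $Q^{\frac n2+1}$ for cases (v)--(vii), handle $n=4$ separately for (viii)), but the step on which your whole local-structure analysis rests is invalid. You propose to treat small contractions and divisorial contractions with oversized fibers by ``using that each fiber $F$ is a Fano variety which, by \cite{CMSB} and \cite{DH17}, must be $\P^{\dim F}$ or $Q^{\dim F}$.'' An irreducible component of a non-trivial fiber of a birational extremal contraction need not be smooth, nor even normal, and it is not a Fano variety in any useful sense: what is true is only that $-K_X|_F$ is ample, which says nothing about the intrinsic canonical sheaf of $F$. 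The theorems of Cho--Miyaoka--Shepherd-Barron and Dedieu--H\"oring are statements about smooth varieties; the paper applies them only to the smooth base $Z$ of the second contraction (where $\dim Z=\iota_X+1$ forces $\iota_Z\le\iota_X+2$), never to fibers. Likewise, Theorem~\ref{them:Wis} and \cite{Occ06} do not ``produce'' a projective-bundle structure; they concern products of projective spaces. The paper's replacement for your fiber argument is a different mechanism altogether: unsplit families of rational curves and the Locus technique (Proposition~\ref{prop:LocMY:dim}, Lemma~\ref{lem:LocMY}) combined with Serre's inequality (Lemma~\ref{lem:Serre}) to force equidimensionality of one of the two contractions, after which Theorem~\ref{them:HN13} (equidimensional with $\ell(R)=d+1$ implies $\P^d$-bundle) and Theorem~\ref{them:AO02} (divisorial with fibers of dimension $\ell(R)$ implies smooth blow-up) apply; this dichotomy is exactly Lemma~\ref{lem:dimF:l(R)}. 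You also never reduce to $\rho_X=2$: for $n\ge 6$ and $\rho_X\ge 3$ the paper invokes \cite{Nov16} to get $(\P^2)^3$, which admits no birational contraction.

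Even granting the blow-up/bundle dichotomy, two essential closing steps are absent from your plan. First, in the small-contraction case --- which is not ``excluded'' but is precisely the source of model (vii) --- knowing that the second contraction $\psi\colon X\to Z$ is a $\P^{\iota_X-1}$-bundle does not identify $Z$; the paper (Theorem~\ref{them:main1}) gets $Z\cong\P^{\iota_X+1}$ by normalizing the exceptional locus, taking a Stein factorization, applying \cite[Theorem~1.4]{HNov13} to obtain a projective bundle in codimension one, and then invoking Lazarsfeld's theorem \cite{Laz84} on surjections from projective space. Second, ``recovering the bundle from the contracted section'' does not yield (v)--(vii): one must classify all rank-$\frac n2$ bundles $\cE$ on $\P^{\frac n2+1}$ and $Q^{\frac n2+1}$ for which $\P(\cE)$ is Fano of pseudoindex $\frac n2$. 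The paper does this (Propositions~\ref{prop:bundle}, \ref{prop:bundle:Q}, \ref{prop:bundle:iotaZ}) by restricting to lines, using the Fano condition to bound the splitting type, and then appealing to classification theorems for uniform bundles (\cite{Sato76}, \cite{AW01}, \cite{PSW92}, \cite{Occ05}), including the exclusion of the extra quadric model $\P(\cS(1)\oplus\cO_{Q^4})$ on the grounds that it admits no birational contraction (Remark~\ref{rem:spinor}). Without these ingredients your case analysis cannot be closed, so the proposal as written has a genuine gap rather than being a complete alternative route.
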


This paper is structured as follows. Chapter 2 summarizes what is known about contractions and the deformation theory of rational curves. Furthermore, we verify through explicit calculations that the pseudoindex of the {smooth} Fano varieties appearing in Theorem~\ref{them:main} is $\frac{n}{2}$. 
In Chapter 3, we classify {smooth} Fano varieties with pseudoindex $\frac{n}{2}$ that exhibit special structures, such as blow-ups and projective bundles. In Chapter 4, we will classify {smooth} Fano varieties appearing in Theorem~\ref{them:main} into cases with small contraction and cases with divisorial contraction. {In particular, we show that any smooth Fano variety admitting either a small or a divisorial contraction also admits a structure as a blow-up or a projective bundle, thereby allowing us to reduce
the classification to the cases covered in Chapter 3.}

\subsection*{Notation and Conventions}\label{subsec:NC} In this paper, we work over the complex number field. We use standard notation and conventions as in the books \cite{Har}, \cite{Kb} and \cite{KM}.  
\begin{itemize}
\item We denote by $\P^n$ the projective space of dimension $n$ and by $Q^n$ a smooth quadric hypersurface of dimension $n$. 
\item A {\it curve} means a projective curve.
\item {{For a smooth projective variety $X$ and its smooth projective closed subvariety $Y$, we write $N_{Y/X}$ for the normal bundle of $Y$ in $X$.}} 
\item For projective varieties $X, Y$ and $F$, a smooth surjective morphism $f:X\to Y$ is called an {\it $F$-bundle} if any fiber of $f$ is isomorphic to $F$. 
\item {A contraction of an extremal ray is called simply an {\it elementary contraction} when no extremal ray is specified.}
\item For a smooth projective variety $X$, we denote by $\rho_X$ the Picard number of $X$.
\item For a smooth projective variety $X$, the corresponding numerical class of a curve $C\subset X$ is denoted by $[C]\in N_1(X)$. 
\item For a vector bundle $\cE$ over a variety $X$, we denote by $\cE^{\vee}$ the dual of $\cE$. 
\end{itemize}

\section{Preliminaries}

\subsection{Contractions} 

\begin{definition} 
For a smooth projective variety $X$ and a $K_X$-negative extremal ray $R \subset \overline{NE}(X)$, the \textit{length} of $R$ is defined as
$$\ell(R) := \min \{-K_X \cdot C \mid C \text{ is a rational curve and } [C] \in R \}.$$
\end{definition}

\begin{proposition}[{Ionescu-Wi\'sniewski inequality \cite[Theorem~0.4]{Ion86}, \cite[Theorem~1.1]{Wis91}}]\label{prop:Ion:Wis} 
Let $X$ be a smooth projective variety, and let $\varphi: X \to Y$ be the contraction of a $K_X$-negative extremal ray $R\subset \overline{NE}(X)$, where $E$ denotes {an irreducible component of} the exceptional locus of the contraction $\varphi$. Let $F$ be an irreducible component of a non-trivial fiber of $\varphi$ {contained in $E$}. Then,
\[
\dim E + \dim F \geq \dim X + \ell(R) - 1.
\]
\end{proposition}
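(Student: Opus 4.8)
The plan is to derive the inequality entirely from the deformation theory of rational curves in the ray $R$, combining three ingredients: a Riemann--Roch lower bound for the dimension of the space of deformations of a minimal curve, a dimension count for the evaluation of the universal family onto $E$, and the bend-and-break finiteness that controls the fibre $F$. Throughout I write $n=\dim X$.

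First I would reduce to the case where $F$ is a general fibre of $\varphi|_E\colon E\to \varphi(E)$: since fibre dimension is upper semicontinuous, any irreducible component of a special fibre has dimension at least that of a general one, so it suffices to prove the bound for general $F$, and I may take $E$ to be the irreducible component containing $F$. Next I would choose a rational curve $C\subset E$ in $R$ through a general point with $-K_X\cdot C=\ell(R)$; because $R$ is extremal and $K_X$-negative, minimality forces the corresponding family to be unsplit, since any degeneration would split $C$ into $R$-curves of strictly smaller anticanonical degree, which is impossible. Let $\cH$ be the associated irreducible family of such curves, $\cU\to\cH$ its universal $\P^1$-family, and $e\colon\cU\to X$ the evaluation.

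The three estimates then run as follows. By the standard bound $\dim_{[f]}\Hom(\P^1,X)\ge \chi(f^{*}T_X)=\ell(R)+n$, passing to the unparametrised family gives $\dim\cH\ge \ell(R)+n-3$. Since every curve in $R$ is contracted by $\varphi$, the whole family remains inside $E$, so $e(\cU)$ is an irreducible component $E'\subseteq E$; comparing dimensions through the fibres of $e$ yields, for general $y\in E'$, that the subfamily $\cH_y$ of curves through $y$ satisfies $\dim\cH_y=\dim\cH+1-\dim E'$. Finally, the curves through $y$ all lie in the single fibre through $y$, so $\Loc(\cH_y)\subseteq F$; unsplitness implies that only finitely many members pass through two general points, making the evaluation from the universal $\P^1$-family over $\cH_y$ generically finite onto its image, whence $\dim F\ge \dim\Loc(\cH_y)=\dim\cH_y+1$. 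Chaining these, $\dim F+\dim E\ge \dim F+\dim E'\ge (\dim\cH+2-\dim E')+\dim E'=\dim\cH+2\ge \ell(R)+n-1$, as desired.

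The main obstacle is the bookkeeping in the middle step: one must ensure the universal family over the unsplit component is well behaved (general member smooth, a general point of $E'$ lying on a single member) so that the fibre-dimension identity $\dim\cH_y=\dim\cH+1-\dim E'$ holds, and that the two-point finiteness coming from bend-and-break applies at a general point of $\Loc(\cH_y)$. Extremality of $R$ is precisely what supplies the unsplitness underlying both the bend-and-break step and the fact that deformations of $C$ never leave $E$; without it the family could split and the dimension accounting would break down.
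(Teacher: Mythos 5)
Your toolkit here (the Riemann--Roch bound on $\Hom(\P^1,X)$, unsplitness via extremality, bend-and-break finiteness) is the right one, and several individual steps (the unsplitness argument, the arithmetic of the final chain) are fine; note also that the paper itself gives no proof of this proposition but simply cites Ionescu and Wi\'sniewski, so correctness against the standard argument is the only issue. The genuine gap is that your curve family is anchored at the wrong place. You ``choose a rational curve $C\subset E$ in $R$ through a general point with $-K_X\cdot C=\ell(R)$,'' but the definition of $\ell(R)$ only provides such a curve \emph{somewhere}: it lies in some irreducible component of $\Exc(\varphi)$, which need not be the given $E$, and even if $C\subset E$ nothing forces its deformations to reach general points of $E$. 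Hence the assertion that $e(\cU)$ is ``an irreducible component $E'\subseteq E$'' is unjustified; all that is true is that $\Loc(\cH)$ is an irreducible closed subset of \emph{some} component of $\Exc(\varphi)$. Worse, even granting $E'=\Loc(\cH)\subseteq E$, your estimate controls only the fiber components through a general point $y$ of $E'$. If $E'\subsetneq E$, these lie over $\varphi(E')\subsetneq\varphi(E)$ and may all be \emph{special} fibers of $\varphi|_E$, which by semicontinuity are \emph{larger} than general ones; an inequality with a larger $\dim F$ on the left does not imply the inequality for a general fiber. This collides with your opening reduction: you reduced the statement to general fibers of $\varphi|_E$, but the body of the proof only ever bounds fibers meeting $\Loc(\cH)$, which need not include any general fiber. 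So the two halves of the argument never connect.

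The repair, which is essentially the classical proof, is to anchor the family at a general point $x$ of the \emph{given} fiber component $F$, and this requires the one input you omitted: by Kawamata's theorem on lengths of extremal rays, every non-trivial fiber of a $K_X$-negative elementary contraction is covered by rational curves whose classes lie in $R$. Among those through $x$, take $C$ of minimal $-K_X$-degree (only $-K_X\cdot C\ge\ell(R)$ is known, and that suffices), and let $\cV$ be a component of ${\rm RatCurves}^n(X)$ containing $[C]$. Extremality plus minimality \emph{through $x$} makes $\cV_x$ proper; since $x$ is general in $F$, it avoids every other irreducible component of $\varphi^{-1}(\varphi(x))$ and every component of $\Exc(\varphi)$ not containing $F$, so every curve of $\cV_x$ lies in $F$, giving $\Loc(\cV_x)\subseteq F$, while $\Loc(\cV)$, being irreducible and containing $x$, lies in a component of $\Exc(\varphi)$ containing $F$ (namely $E$ when $E$ is the unique such component). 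Then the estimate you were re-deriving from scratch --- which is exactly Proposition~\ref{prop:Ion:Wis:2} of the paper (Koll\'ar IV.2.6) --- gives
\[
\dim F \;\ge\; \dim\Loc(\cV_x) \;\ge\; (-K_X\cdot C)+\codim_X\Loc(\cV)-1 \;\ge\; \ell(R)+\dim X-\dim E-1,
\]
which is the desired inequality for \emph{every} fiber component $F$ at once, with no semicontinuity reduction needed.
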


\begin{theorem}[{\cite[Theorem~1.3]{HNov13}}]\label{them:HN13} 
Let $X$ be a smooth projective variety, and let $\varphi: X \to Y$ be the contraction of an extremal ray $R\subset \overline{NE}(X)$ such that the dimension of each fiber is $d$ and $\ell(R) = d+1$. Then, $\varphi$ is a projective bundle.
\end{theorem}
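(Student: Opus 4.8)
The plan is to determine the type of $\varphi$, identify its fibers, and then assemble them into a projective bundle. First I would observe that, since $R$ is contracted and every fiber has dimension $d$, the contraction must be of fiber type with $d\ge 1$; hence $\varphi$ is equidimensional and $\dim Y = n-d$. By generic smoothness a general fiber $F$ is smooth of dimension $d$, and over the smooth locus of $\varphi$ the normal bundle $N_{F/X}$ is trivial, so adjunction gives $-K_F \cong -K_X|_F$. Every rational curve contained in a fiber is contracted by $\varphi$, so its numerical class lies on the single ray $R$, and by definition of $\ell(R)$ its anticanonical degree is at least $\ell(R)=d+1$. Therefore the pseudoindex of $F$ satisfies $\iota_F\ge d+1=\dim F+1$, and the theorem of Cho--Miyaoka--Shepherd-Barron \cite{CMSB} forces $F\cong\P^d$ with $-K_X|_F\cong\cO_{\P^d}(d+1)$; the minimal rational curves in $R$ thus restrict to lines on each such $\P^d$.

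The core of the proof is to promote this description from the general fiber to every fiber and to exclude any degeneration. Here I would study the closed family $\cM$ of minimal rational curves together with its universal family and evaluation morphism. The hypothesis $\ell(R)=d+1$ is the largest value compatible with fiber dimension $d$ in the Ionescu--Wi\'sniewski inequality (Proposition~\ref{prop:Ion:Wis}), and this maximality provides exactly the rigidity needed: a dimension count for families of rational curves, together with Mori's bend-and-break, should show that through every point of every fiber there passes a $d$-dimensional family of lines meeting in the expected way, so that no fiber can become reducible, non-reduced, or of dimension greater than $d$. Consequently every fiber is isomorphic to $\P^d$, the base $Y$ is smooth, and $\varphi$ is a smooth morphism, i.e.\ a $\P^d$-fibration.

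Finally I would realize this fibration as the projectivization of a vector bundle. Because $R$ is a single extremal ray, the relative Picard number is one, and $-K_X$ restricts to $\cO(d+1)$ on every fiber; descending the relative hyperplane class should produce a $\varphi$-ample line bundle $\xi$ with $\xi|_F\cong\cO_{\P^d}(1)$, and then $\cE:=\varphi_*\cO_X(\xi)$ is locally free of rank $d+1$ by cohomology and base change, with $X\cong\P(\cE)$ over $Y$, as required. I expect the main obstacle to be the middle paragraph: controlling the special fibers and ruling out singular or reducible degenerations, where the maximal-length hypothesis must be exploited carefully. A secondary difficulty is verifying that the relative $\cO(1)$ descends to a genuine line bundle $\xi$ on $X$, i.e.\ that the Brauer-type obstruction vanishes, which is precisely what upgrades the abstract $\P^d$-fibration to an honest $\P(\cE)$.
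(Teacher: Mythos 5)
First, a structural remark: the paper contains no proof of this statement at all --- Theorem~\ref{them:HN13} is imported verbatim from \cite{HNov13} and used as a black box --- so your proposal can only be measured against H\"oring--Novelli's original argument. Your first paragraph is correct and coincides with how that argument begins: the contraction is of fiber type and equidimensional, a general fiber $F$ is smooth with $-K_F=-K_X|_F$, every rational curve in a fiber is contracted and hence has $-K_X$-degree at least $\ell(R)=d+1$, and \cite{CMSB} yields $F\cong\P^d$. The genuine gap is exactly the step you defer with ``should show.'' What bend-and-break and the dimension count of Proposition~\ref{prop:Ion:Wis:2} actually give is this: the family $\cM$ of lines in fibers is unsplit (a degeneration would split a contracted curve of degree $d+1$ into contracted rational pieces each of degree at least $d+1$, contradicting Lemma~\ref{lem:degeneration:curves}), and for every point $x$ of every fiber one has ${\rm Locus}(\cM_x)$ contained in the fiber through $x$ with $\dim {\rm Locus}(\cM_x)\geq d$, so every $d$-dimensional component of every fiber is covered by lines through each of its points. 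But this is far from the conclusion: a special fiber is a priori reducible, non-reduced, or singular, and the Cho--Miyaoka--Shepherd-Barron theorem you invoke applies only to \emph{smooth} projective varieties, so it cannot be applied to such a fiber. Proving that every fiber is an irreducible, reduced $\P^d$, that $Y$ is smooth (which you assert without argument, and which is needed even to run a flatness or miracle-flatness argument), and that $\varphi$ is a smooth morphism is precisely the technical content of \cite{HNov13}; it requires normalization arguments and singular analogues of the $\P^d$-characterization, not just a dimension count.

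Second, your last paragraph aims at a conclusion that is \emph{stronger} than the theorem and is in fact false in that generality. ``Projective bundle'' here must be read in the paper's own convention (see Notation and Conventions): a smooth surjective morphism all of whose fibers are isomorphic to $\P^d$, i.e.\ only \'etale-local triviality. The Brauer obstruction you hope to ``verify vanishes'' genuinely need not vanish: a Severi--Brauer (conic) fibration $X\to Y$ realizing a nontrivial $2$-torsion Brauer class on, say, an Enriques surface $Y$ is a smooth projective threefold whose projection is an elementary $K_X$-negative contraction with all fibers $\P^1$ and $\ell(R)=2=d+1$, yet it admits no rational section and hence is not $\P(\cE)$ for any vector bundle $\cE$ on $Y$. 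This is why the paper itself invokes the vanishing of the Brauer group only \emph{after} the base has been identified as $\P^{\iota_X+1}$ or $Q^{\iota_X+1}$ (see the first line of the proof of Proposition~\ref{prop:bundle}); that descent step belongs to the applications, not to Theorem~\ref{them:HN13}, and no argument can make it part of a proof of the theorem as stated for an arbitrary base.
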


\begin{theorem}[{\cite[Theorem~5.1]{AO02}}]\label{them:AO02} 
For a smooth projective variety $X$, the following are equivalent:
\begin{enumerate}
    \item There exists an extremal ray $R\subset \overline{NE}(X)$ such that the associated contraction is divisorial, and the fibers are of dimension $\ell(R)$.
    \item $X$ is a blow-up of a smooth projective variety $X'$ along a smooth subvariety of codimension $\ell(R)+1$.
\end{enumerate}
\end{theorem}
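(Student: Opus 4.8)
The plan is to prove the two implications separately; the direction (ii) $\Rightarrow$ (i) I would dispatch by a direct computation on the blow-up, while (i) $\Rightarrow$ (ii) is the substantive direction. For (ii) $\Rightarrow$ (i), suppose $X = Bl_Z X'$ with $Z \subset X'$ smooth of codimension $c$, and let $\varphi : X \to X' =: Y$ be the blow-down. The exceptional divisor $E = \P(N_{Z/X'})$ is a $\P^{c-1}$-bundle over $Z$, and $\varphi$ contracts the ray $R$ spanned by a line $f$ inside a fiber of $E \to Z$. From $K_X = \varphi^{*}K_{X'} + (c-1)E$ and $E \cdot f = -1$ I would read off $-K_X \cdot f = c-1$; since $f$ is a minimal rational curve in $R$ this gives $\ell(R) = c-1$, and as every non-trivial fiber of $\varphi$ is such a $\P^{c-1}$, all fibers have dimension $c-1 = \ell(R)$. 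Setting $c = \ell(R)+1$ yields (i).

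For (i) $\Rightarrow$ (ii), write $\ell := \ell(R)$, let $\varphi : X \to Y$ be the divisorial contraction with irreducible exceptional divisor $E$, and set $Z := \varphi(E)$. Since $\dim E = n-1$ and every fiber $F$ has $\dim F = \ell$ by hypothesis, the Ionescu--Wi\'sniewski inequality (Proposition~\ref{prop:Ion:Wis}) reads $(n-1)+\ell \geq n+\ell-1$ and is therefore an \emph{equality}. I would exploit this boundary case together with the deformation theory of the minimal rational curves in $R$: for a minimal $C \in R$ one has $-K_X \cdot C = \ell$, and running a bend-and-break argument inside the $\ell$-dimensional fiber $F$, combined with the characterization of projective space by the anticanonical degrees of its rational curves, I would show that every non-trivial fiber satisfies $F \iso \P^{\ell}$ with $C$ a line, and simultaneously that $\cO_X(E)|_F \iso \cO_{\P^{\ell}}(-1)$, i.e. $E \cdot C = -1$.

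Next I would pass to the exceptional divisor. Using $E \cdot C = -1$ and the adjunction formula $K_E = (K_X+E)|_E$, a fiber line satisfies $-K_E \cdot C = -K_X \cdot C - E \cdot C = \ell+1$. Thus on the (smooth) projective variety $E$ the restricted contraction $\varphi|_E : E \to Z$ has all fibers of dimension $\ell$ and length $\ell+1$, so Theorem~\ref{them:HN13} applies and makes $\varphi|_E$ a projective bundle; in particular $Z$ is smooth and $E \iso \P(\cV)$ for a rank-$(\ell+1)$ bundle $\cV$ on $Z$, with $\cO_E(E)$ restricting to $\cO(-1)$ on each fiber. The sequence $0 \to N_{F/E} \to N_{F/X} \to \cO_{\P^\ell}(-1) \to 0$ with $N_{F/E}$ trivial then splits, giving $N_{F/X} \iso \cO_{\P^{\ell}}(-1)\oplus\cO_{\P^{\ell}}^{\oplus(n-1-\ell)}$. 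With this fiberwise normal bundle pinned down, I would invoke the classical Fujiki--Nakano contractibility criterion to conclude that $Y$ is smooth, that $Z \subset Y$ is a smooth subvariety of codimension $\ell+1$, and that $\varphi$ is identified with the blow-up $X \iso Bl_Z Y$, which is (ii).

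The hard part will be the passage from the numerical equality in Ionescu--Wi\'sniewski to the precise geometric statements needed downstream: that \emph{every} fiber (not merely the general one) is isomorphic to $\P^{\ell}$, that $\cO_X(E)|_F = \cO_{\P^\ell}(-1)$, and that $E$ itself is smooth so that Theorem~\ref{them:HN13} may be applied to $\varphi|_E$. Ruling out singular or exceptional higher-dimensional fibers demands the full deformation theory of rational curves in the extremal ray, and the final contractibility step — establishing that the base $Y$ is \emph{smooth} along $Z$ rather than merely normal, and that the contraction is literally the inverse of a smooth blow-up — is the delicate point, which the Fujiki--Nakano criterion resolves only once the fiberwise structure of $E$ has been completely determined.
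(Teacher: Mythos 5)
First, a point of comparison: the paper does not prove this statement at all --- it is quoted verbatim from Andreatta--Occhetta \cite[Theorem~5.1]{AO02} as a black box. So your attempt can only be measured against the standard proof in the literature, whose skeleton (fibers are $\P^{\ell}$ with $\cO_X(E)|_F \iso \cO_{\P^{\ell}}(-1)$, hence $N_{F/X} \iso \cO_{\P^{\ell}}(-1)\oplus \cO_{\P^{\ell}}^{\oplus(n-1-\ell)}$, then Fujiki--Nakano) you have correctly reproduced. Your (ii) $\Rightarrow$ (i) direction is essentially complete modulo one small remark: to get $\ell(R)=c-1$ you should note that any curve $C$ with $[C]\in R$ satisfies $[C]=\lambda[f]$ with $\lambda = -E\cdot C$ a positive \emph{integer}, so $-K_X\cdot C \geq c-1$; ``$f$ is minimal'' is not by itself an argument.

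The substantive direction, however, has two genuine gaps. (a) The entire content of the theorem is the step you compress into ``running a bend-and-break argument \ldots combined with the characterization of projective space, I would show that every non-trivial fiber satisfies $F\iso\P^{\ell}$ and $E\cdot C=-1$.'' No argument is given, and the obvious one fails as stated: the fibers $F$ are not known to be smooth, normal, or even reduced as schemes, so the Cho--Miyaoka--Shepherd-Barron-type characterization cannot be applied to $F$ directly; the actual proofs (Ando for $\ell=1$, Andreatta--Wi\'sniewski and \cite{AO02} in general) identify the fibers via horizontal slicing, relative vanishing and base-point-freeness techniques, not by bend-and-break inside $F$. Observing that the Ionescu--Wi\'sniewski inequality of Proposition~\ref{prop:Ion:Wis} is an equality is correct but does not by itself yield the fiber structure. (b) Your application of Theorem~\ref{them:HN13} to $\varphi|_E : E \to Z$ is illegitimate: that theorem requires a \emph{smooth} projective total space and the contraction of an extremal ray, whereas $E$ is not known to be smooth (nor is $\varphi|_E$ known to be a $K_E$-negative extremal contraction of $E$). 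This is not a formality: in its own proof of Theorem~\ref{them:main1} the paper confronts exactly this issue and can only apply the H\"oring--Novelli result \cite[Theorem~1.4]{HNov13} to the \emph{normalization} $\tilde E$, obtaining a projective bundle merely in codimension one. There is also a mild circularity in your layout --- you first claim all fibers are $\P^{\ell}$, then re-derive the bundle structure on $E$ from Theorem~\ref{them:HN13}; what that second step is really needed for (smoothness of $Z$ and of $E$, local triviality) is precisely what it cannot deliver under its hypotheses. The standard repair is a Hilbert-scheme argument: once $F\iso\P^{\ell}$ and $N_{F/X}\iso \cO_{\P^{\ell}}(-1)\oplus\cO_{\P^{\ell}}^{\oplus(n-1-\ell)}$ are known, $H^1(F,N_{F/X})=0$ shows the Hilbert scheme is smooth of dimension $n-1-\ell$ along these fibers, and the universal family maps isomorphically onto $E$, giving smoothness of $E$ and $Z$ and the $\P^{\ell}$-bundle structure; only then does Fujiki--Nakano finish as you describe. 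As written, your proposal identifies the right landmarks but leaves both the central fiber-identification step and the smoothness of $E$ unproved.
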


In this paper, we often use the following inequality { concerning the dimensions of the fibers of a contraction} without mentioning it:

\begin{lemma}\label{lem:Serre} Let $X$ be a smooth projective variety. For projective subvarieties $Y$ and $Z$ of $X$, if $Y\cap Z$ is nonempty, then we have 
$$
\dim (Y\cap Z) \geq \dim Y+\dim Z-\dim X.
$$
In particular, when $X$ admits two distinct elementary contractions $\varphi: X\to Y$ and $\psi: X \to Z$, { if an irreducible component $F$ of a fiber of $\varphi$ and an irreducible component $F'$ of a fiber of $\psi$ intersect nontrivially}, we have $$\dim X \geq \dim F+\dim F'.$$
\end{lemma}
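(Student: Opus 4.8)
The plan is to establish the general dimension inequality first and then obtain the statement about fibers as a short formal consequence. The key to the general inequality is the classical \emph{reduction to the diagonal}, which is precisely where the smoothness of $X$ enters. Writing $n = \dim X$ and $\Delta \subset X \times X$ for the diagonal, one has a scheme-theoretic identification $Y \cap Z \cong (Y \times Z) \cap \Delta$ obtained by intersecting $Y \times Z \subset X \times X$ with $\Delta$. Since $X$ is smooth, $\Delta$ is a regular embedding of codimension $n$: in local (analytic or \'etale) coordinates $x_1, \dots, x_n$ near a point of $X$, the diagonal is cut out in $X \times X$ by the $n$ equations $x_i \otimes 1 - 1 \otimes x_i = 0$. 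Thus $Y \cap Z$ is locally obtained from the $(\dim Y + \dim Z)$-dimensional variety $Y \times Z$ by imposing $n$ equations, and Krull's principal ideal theorem (Hauptidealsatz) guarantees that every irreducible component of the result has dimension at least $\dim Y + \dim Z - n$. As $Y \cap Z$ is assumed nonempty, this yields the asserted bound $\dim(Y \cap Z) \geq \dim Y + \dim Z - \dim X$.

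For the ``in particular'' statement, I would apply the inequality just proved to the subvarieties $F$ and $F'$, which meet by hypothesis, obtaining
\[
\dim(F \cap F') \geq \dim F + \dim F' - \dim X.
\]
It therefore suffices to show that $F \cap F'$ is zero-dimensional, for then the left-hand side is $\leq 0$ and the claim $\dim X \geq \dim F + \dim F'$ follows. Suppose instead that $\dim(F \cap F') \geq 1$; then $F \cap F'$ contains an irreducible curve $C$. Since $C \subset F$ and $F$ lies in a fiber of $\varphi$, the image $\varphi(C)$ is a point, so $C$ is contracted by $\varphi$, and by the defining property of an extremal contraction $[C]$ lies on the extremal ray $R_\varphi$ associated with $\varphi$; the same argument with $\psi$ shows $[C] \in R_\psi$. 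As $\varphi$ and $\psi$ are distinct elementary contractions, $R_\varphi$ and $R_\psi$ are distinct extremal rays of $\overline{NE}(X)$, and two distinct rays meet only at the origin, so $[C] = 0$. This is impossible for a curve in a projective variety, since $H \cdot C > 0$ for any ample divisor $H$. Hence $\dim(F \cap F') \leq 0$, completing the deduction.

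The genuinely essential inputs are thus the reduction to the diagonal together with Krull's theorem for the general inequality, and the elementary observation that a curve contracted by two distinct extremal contractions would have its numerical class on two distinct rays. The only point requiring care is the scheme-theoretic reduction to the diagonal and the verification that $\Delta$ is locally cut out by $n$ equations; this is exactly the step that fails without smoothness of $X$, since on a singular $X$ the diagonal need not be a regular embedding and the inequality can genuinely fail. I expect no further obstacle, as the fiber statement is then purely formal.
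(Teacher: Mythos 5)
Your proof is correct and takes essentially the same approach as the paper: the paper simply cites the first inequality as Serre's classical inequality (whose standard proof is exactly your reduction-to-the-diagonal plus Krull argument), and deduces the fiber statement by the same contradiction, namely that a curve in $F \cap F'$ would be contracted by two distinct elementary contractions. The only difference is that you spell out the details the paper leaves implicit, in particular why the class of such a curve, lying on two distinct extremal rays, would have to vanish.
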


\begin{proof} The first part is known as Serre's inequality. Regarding the second part, if $\dim X < \dim F+\dim F'$, then the first part tells us the existence of a curve $C \subset F \cap F'$. However, this is a contradiction because the elementary contractions $\varphi$ and $\psi$ are distinct.   
\end{proof}

\begin{lemma}[{\cite[Lemma~2.5]{BCDD03}}]\label{lem:Fano:bundle} 
Let $X$ be a smooth Fano variety and $\psi: X \to Z$ be a $\P^r$-bundle. Then the following statements hold:
\begin{enumerate}
    \item The base space $Z$ is a smooth Fano variety with $\iota_Z \geq \iota_X$;
    \item If $\iota_X=\iota_Z$, then for a rational curve $f: \P^1\to Z$ such that $\deg f^{\ast}(-K_Z)=\iota_Z$, the fiber product $\P^1\times_Z X$ is isomorphic to $\P^1 \times \P^r$.
\end{enumerate}
\end{lemma}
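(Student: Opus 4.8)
The plan is to reduce both statements to a single numerical estimate obtained from a \emph{minimal section} construction. Fix a rational curve $f\colon\P^1\to Z$ and form the fibre product $X_f:=\P^1\times_Z X$, which is a $\P^r$-bundle $\pi\colon X_f\to\P^1$ equipped with the base-change morphism $g\colon X_f\to X$. By Tsen's theorem any $\P^r$-bundle over a curve is the projectivization of a vector bundle, so $X_f\cong\P(\mathcal F)$ for some $\mathcal F$ of rank $r+1$ on $\P^1$, and by Grothendieck's splitting theorem $\mathcal F\cong\bigoplus_{i=0}^{r}\cO_{\P^1}(a_i)$ with $a_0\geq\cdots\geq a_r$. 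Let $\xi$ denote the relative hyperplane class of $\pi$ and let $\sigma$ be the minimal section, characterized by $\xi\cdot\sigma=a_r$. Setting $\Gamma:=g(\sigma(\P^1))\subset X$, the curve $\Gamma$ maps onto $f(\P^1)$ under $\psi$, so it is a genuine rational curve on $X$.

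First I would record the key identity. Relative canonical classes commute with base change, so $g^*(-K_X)=-K_{X_f}+\pi^*\big(K_{\P^1}+f^*(-K_Z)\big)$, and on the ruled variety $\P(\mathcal F)$ one has the standard formula $-K_{X_f}=(r+1)\xi+\pi^*(-K_{\P^1}-\det\mathcal F)$. Intersecting with $\sigma$ and using $\xi\cdot\sigma=a_r$ and $\deg\det\mathcal F=\sum_i a_i$, a short computation gives
\begin{equation*}
-K_X\cdot\Gamma=\deg f^*(-K_Z)-\sum_{i=0}^{r}\big(a_i-a_r\big).
\end{equation*}
Because every $a_i\geq a_r$, the sum is nonnegative, whence $\deg f^*(-K_Z)\geq -K_X\cdot\Gamma\geq\iota_X$, the last inequality holding since $\Gamma$ is a rational curve on the Fano variety $X$. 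As $f$ was arbitrary, every rational curve on $Z$ has anticanonical degree at least $\iota_X$, which already yields $\iota_Z\geq\iota_X$.

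To complete (i) I must upgrade this curve-by-curve bound to ampleness of $-K_Z$. Here I would invoke that $X$ is Fano: by the Cone Theorem $\overline{NE}(X)$ is a rational polyhedral cone generated by finitely many rational curves $C_1,\dots,C_k$, and pushing forward along the surjection $\psi$ gives $\overline{NE}(Z)=\psi_*\overline{NE}(X)$, generated by the classes $\psi_*[C_i]$. Each nonzero such class is a positive multiple of the class of a rational curve $\psi(C_i)\subset Z$, on which $-K_Z$ is positive by the previous paragraph; hence $-K_Z$ is positive on all of $\overline{NE}(Z)\setminus\{0\}$ and is ample by Kleiman's criterion. Smoothness of $Z$ is automatic, since $\psi$ is a smooth surjection with smooth total space. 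For (ii), assume $\iota_X=\iota_Z$ and choose $f$ with $\deg f^*(-K_Z)=\iota_Z=\iota_X$. The displayed identity becomes $-K_X\cdot\Gamma=\iota_X-\sum_i(a_i-a_r)$, and comparing with $-K_X\cdot\Gamma\geq\iota_X$ forces $\sum_i(a_i-a_r)\leq 0$, hence $a_0=\cdots=a_r$. Thus $\mathcal F$ is balanced, and $\P^1\times_Z X=\P(\mathcal F)\cong\P^1\times\P^r$.

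I expect the main obstacle to be precisely the passage, in part (i), from the estimate on rational curves to the ampleness of $-K_Z$: the minimal-section inequality controls only rational classes, whereas Kleiman's criterion demands positivity on the entire cone $\overline{NE}(Z)$, which could a priori contain non-rational extremal classes. The Fano hypothesis on $X$ is exactly what closes this gap, since it makes $\overline{NE}(X)$—and therefore its image $\overline{NE}(Z)$—rational polyhedral and generated by rational curves. The remaining ingredients (existence of the minimal section and the value $\xi\cdot\sigma=a_r$, together with the canonical bundle formula on $\P(\mathcal F)$) are routine once the splitting type of $\mathcal F$ is fixed.
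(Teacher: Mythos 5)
The paper does not actually prove this lemma: it is quoted verbatim from \cite[Lemma~2.5]{BCDD03}, so there is no internal proof to compare against. Your argument is correct and is essentially the standard proof of that cited result --- base-change along $f$, Tsen plus Grothendieck splitting to write $\P^1\times_Z X=\P(\bigoplus\cO_{\P^1}(a_i))$, the minimal-section degree computation, and Kleiman's criterion via $\overline{NE}(Z)=\psi_*\overline{NE}(X)$ for ampleness of $-K_Z$. The only imprecision is writing $-K_X\cdot\Gamma$ for $\deg\,(g\circ\sigma)^*(-K_X)$, which is a priori a positive multiple $d\,(-K_X\cdot\Gamma)$ if $g|_\sigma$ multiply covers $\Gamma$; since $d\geq 1$ and $-K_X$ is ample, every inequality you use survives unchanged (and in part (ii) the equality case forces $d=1$ along with $a_0=\cdots=a_r$), so this does not affect the proof.
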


\subsection{Families of rational curves}
Let $X$ be a smooth projective variety, and consider the space of rational curves denoted by ${\rm RatCurves}^n(X)$ (for details, see \cite[Section~II.2]{Kb}). A \textit{family of rational curves} $\mathcal{M}$ on $X$ is an irreducible component of $\text{RatCurves}^n(X)$. This family $\mathcal{M}$ comes with a $\mathbb{P}^1$-bundle $p: \mathcal{U} \to \mathcal{M}$ and an evaluation morphism $q: \mathcal{U} \to X$. We denote by $\text{Locus}(\mathcal{M})$ the union of all curves parameterized by $\mathcal{M}$. For a point $x \in X$, we denote by $\mathcal{M}_x$ the normalization of $p(q^{-1}(x))$ and by $\text{Locus}(\mathcal{M}_x)$ the union of all curves parametrized by $\mathcal{M}_x$. An {\it $\cM$-curve} is a curve parametrized by $\cM$. {Any $\mathcal{M}$-curve is numerically equivalent to any other, so 
the intersection number $(-K_X) \cdot C$ does not depend on the choice of an $\mathcal{M}$-curve $C$. 
We therefore define the anticanonical degree 
$\deg_{(-K_X)} \mathcal{M}$ of $\mathcal{M}$ as $(-K_X) \cdot C$ 
for $[C] \in \mathcal{M}$.
}

\begin{definition} Let $X$ be a smooth projective variety and $\mathcal{M} \subset {\rm RatCurves}^n(X)$ a family of rational curves. 
\begin{enumerate}
\item $\cM$ is a \textit{dominating family} (resp. \textit{covering family}) if $\overline{\text{Locus}(\mathcal{M})}=X$ (resp. ${\text{Locus}(\mathcal{M})}=X$).
\item ${\cM}$ is a \textit{minimal dominating family} if it is a dominating family with the minimal degree for some fixed ample line bundle $A$ on $X$. When $X$ is a {smooth} Fano variety, we always take the anticanonical divisor $-K_X$ as $A$ in this paper. 
\item $\mathcal{M}$ is \textit{locally unsplit} if, for a general point $x \in \text{Locus}(\mathcal{M})$, the family $\mathcal{M}_x$ is proper.
\item $\mathcal{M}$ is \textit{unsplit} if the family $\mathcal{M}$ is proper.
\end{enumerate}
\end{definition}

A minimal dominating family $\cM$ is locally unsplit. 
For an unsplit family $\mathcal{M}$ and a closed subset $Y \subset X$ such that $\text{Locus}(\mathcal{M})\cap Y$ is nonempty, we denote by $\text{Locus}(\mathcal{M})_Y$ the set of points that can be connected to $Y$ by a {$\cM$-curve}.

\begin{theorem}\label{them:fano:min:fam} 
Let $X$ be an $n$-dimensional smooth Fano variety, and let $\cM$ be a locally unsplit family of rational curves on $X$. Then the following statements hold:
\begin{enumerate}
    \item $\deg_{(-K_X)}\cM \leq n+1$.
    \item If $\deg_{(-K_X)}\cM= n+1$, then $X$ is isomorphic to $\mathbb{P}^n$.
    \item {If $\deg_{(-K_X )} \mathcal M=n$, then $X$ is isomorphic either to $Q^n$ or
to the blow-up of $\mathbb P^n$ along a smooth codimension two subvariety
contained in a hyperplane. Moreover, in the latter case, we have $\iota_X= 1$.}
\end{enumerate}
\end{theorem}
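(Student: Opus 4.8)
The plan is to prove Theorem~\ref{them:fano:min:fam} by combining the deformation theory of rational curves with the structural results already available, treating the three parts in increasing order of difficulty. For part (i), I would use the standard dimension count for the family $\cM$ through a general point $x \in X$. Since $\cM$ is locally unsplit, $\cM_x$ is proper, and the curves through $x$ sweep out $\Loc(\cM_x)$. The anticanonical degree controls the dimension of $\cM_x$ via the bound $\dim \cM_x \geq \deg_{(-K_X)}\cM - 2$ coming from the expected dimension of the space of deformations fixing a point; combined with the fact that a nonconstant family through a fixed point cannot have dimension exceeding $\dim X - 1$ (bend-and-break prevents curves through two fixed general points of too-low degree), one obtains $\deg_{(-K_X)}\cM - 2 \leq \dim X - 1$, i.e. $\deg_{(-K_X)}\cM \leq n+1$.

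For part (ii), the equality case $\deg_{(-K_X)}\cM = n+1$ forces the curves through a general point to cover all of $X$ with the maximal-dimensional family. This is precisely the hypothesis of the Cho--Miyaoka--Shepherd-Barron characterization \cite{CMSB}: a smooth projective variety carrying a family of rational curves of anticanonical degree $n+1$ through a general point is $\P^n$. So I would invoke that theorem directly, after checking that the numerical conditions on $\cM$ match the CMSB hypotheses (the curves have the minimal possible degree that still saturates the dimension bound, and the general fiber of the family is as large as possible).

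Part (iii) is the substantive case and the main obstacle. When $\deg_{(-K_X)}\cM = n$, the locus $\Loc(\cM_x)$ of curves through a general point $x$ has dimension at least $n-1$, so it is a divisor or all of $X$. The dichotomy in the statement --- $Q^n$ versus a blow-up of $\P^n$ along a smooth codimension-two subvariety in a hyperplane --- suggests following the Dedieu--H\"oring approach \cite{DH17}: analyze the variety of minimal rational tangents and the structure of $\Loc(\cM_x)$. The hard part is ruling out pathologies and pinning down exactly these two geometries. I expect the argument to split according to whether $\cM$ is unsplit (giving, after an analysis of the tangent directions, the quadric $Q^n$) or whether the minimal curves degenerate, in which case bend-and-break produces a reducible or non-reduced cycle that organizes $X$ as a blow-up. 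In the blow-up case, the extra claim $\iota_X = 1$ would follow by exhibiting the exceptional divisor $E$ of the blow-up and a line in a fiber of $E \to Z$ whose anticanonical degree is $1$: since blowing up along a codimension-two center introduces an exceptional $\P^1$-fibered divisor meeting $-K_X$ with the minimal value, the pseudoindex drops to $1$. This final part is essentially a citation-and-verification of the classification in \cite{DH17}, and I would lean on that result, supplying only the checks that our family $\cM$ satisfies its hypotheses and that the pseudoindex computation on the exceptional divisor gives $\iota_X = 1$.
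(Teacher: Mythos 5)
Your treatment of (i), (ii), and the pseudoindex claim in (iii) matches the paper's proof: the paper likewise disposes of (i) and (ii) by citing \cite{CMSB} (with \cite{Keb02}), and it proves $\iota_X=1$ in the blow-up case by exactly the computation you describe, namely taking a fiber line $\ell$ of the exceptional divisor $E\to W$ and computing $-K_X\cdot \ell = \varphi^{\ast}(-K_{\P^n})\cdot\ell - E\cdot\ell = -E\cdot\ell = 1$. Your dimension-count sketch for (i) is the standard argument (it is the content of Proposition~\ref{prop:Ion:Wis:2} applied to a locally unsplit family), so that part is fine even without the citation.

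The genuine gap is in the first part of (iii). You attribute the whole dichotomy ``$Q^n$ or a blow-up of $\P^n$ along a smooth codimension-two subvariety in a hyperplane'' to \cite{DH17}, but Dedieu--H\"oring's theorem is a characterization of the quadric under a pseudoindex-type hypothesis (every rational curve of degree $\geq n$); it cannot produce the blow-up alternative, since that variety has $\iota_X=1$ and so falls entirely outside the scope of any such characterization. The case where the locally unsplit family fails to be unsplit --- which is precisely where the blow-up arises --- is the main theorem of Casagrande--Druel \cite{CD15}, which the paper cites alongside \cite{DH17}. Your one-sentence sketch of that case (``bend-and-break produces a reducible or non-reduced cycle that organizes $X$ as a blow-up'') compresses the entire content of that paper: one must show the degenerate cycles sweep out a divisor, that this divisor admits a $\P^1$-bundle structure that can be contracted, and that the target of the contraction is $\P^n$ with the stated center. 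As written, your proposal neither cites a result that covers this case nor supplies an argument for it, so part (iii) is incomplete.
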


\begin{proof} The first and second statements follow from \cite{CMSB} (see also \cite{Keb02}). {The former part of the third statement follows from  \cite{DH17} and \cite{CD15}. To check the latter part of the third statement} on the pseudoindex, let $\varphi: X:=Bl_W(\P^n)\to \P^n$ be the blow-up of $\P^n$ along a codimension two smooth subvariety $W$ contained in a hyperplane, and let $E$ be the exceptional divisor and $\ell$ be a fiber of $\varphi|_E: E \to \varphi(E)$. Then we have $-K_X\cdot \ell=\varphi^*(-K_{\P^n})\cdot \ell -E\cdot \ell=-E\cdot \ell=1$.
\end{proof}

\begin{proposition}[{\cite[IV Corollary~2.6]{Kb}}]\label{prop:Ion:Wis:2}  
Let $X$ be a smooth projective variety, and let $\mathcal{M}$ be a locally unsplit family of rational curves on $X$. For a general point $x \in {\rm Locus}(\mathcal{M})$,
\[
\dim {\rm Locus}(\mathcal{M}_x) \geq \deg_{(-K_X)} \mathcal{M} + {\codim}_X {\rm Locus}(\mathcal{M}) - 1.
\]
Moreover, if $\mathcal{M}$ is unsplit, this inequality holds for any point $x \in {\rm Locus}(\mathcal{M})$.
\end{proposition}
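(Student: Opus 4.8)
The plan is a dimension count on the universal family, carried out in the standard bend-and-break framework. Write $n = \dim X$, $\delta = \deg_{(-K_X)}\mathcal{M}$, and let $p: \mathcal{U} \to \mathcal{M}$ and $q: \mathcal{U} \to X$ be the $\mathbb{P}^1$-bundle and the evaluation morphism attached to $\mathcal{M}$, so that $\mathcal{U}$ is irreducible of dimension $\dim\mathcal{M}+1$ and $q(\mathcal{U})$ is dense in ${\rm Locus}(\mathcal{M})$. First I would record a lower bound for the size of the whole family. For an $\mathcal{M}$-curve with normalization $f:\mathbb{P}^1 \to X$, the Hom-scheme has dimension at least $\chi(\mathbb{P}^1, f^{\ast}T_X) = -K_X\cdot C + n = \delta + n$ at $[f]$; quotienting by the $3$-dimensional group $\mathrm{Aut}(\mathbb{P}^1)$ yields $\dim\mathcal{M}\geq \delta + n - 3$.

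Next I would bound the family of curves through a point. Since $q:\mathcal{U}\to {\rm Locus}(\mathcal{M})$ is dominant between irreducible varieties, the fibre-dimension theorem gives $\dim q^{-1}(x) \geq \dim\mathcal{U} - \dim{\rm Locus}(\mathcal{M})$ for every $x\in q(\mathcal{U})$, with equality for general $x$. Because a nonconstant map $\mathbb{P}^1\to X$ has finite fibres, the restriction $q^{-1}(x)\to\mathcal{M}$ of $p$ is finite onto its image, so $\mathcal{M}_x$, the normalization of $p(q^{-1}(x))$, satisfies $\dim\mathcal{M}_x = \dim q^{-1}(x)$. Combining this with the previous step,
\[
\dim\mathcal{M}_x \;\geq\; \dim\mathcal{U} - \dim{\rm Locus}(\mathcal{M}) \;\geq\; (\delta + n - 3) + 1 - \dim{\rm Locus}(\mathcal{M}) \;=\; \delta + {\codim}_X{\rm Locus}(\mathcal{M}) - 2,
\]
an equality-chain valid for general $x$, with the first inequality valid for every $x\in{\rm Locus}(\mathcal{M})$.

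The final step, which I expect to be the crux, is to pass from the parameter space $\mathcal{M}_x$ to the swept-out locus ${\rm Locus}(\mathcal{M}_x)$, for which I claim $\dim{\rm Locus}(\mathcal{M}_x) \geq \dim\mathcal{M}_x + 1$. Put $\mathcal{U}_x := p^{-1}(\mathcal{M}_x)$, a variety of dimension $\dim\mathcal{M}_x + 1$ whose image under $q$ is ${\rm Locus}(\mathcal{M}_x)$; it suffices to show that $q|_{\mathcal{U}_x}$ is generically finite, equivalently that through $x$ and a general second point $y\in{\rm Locus}(\mathcal{M}_x)$ only finitely many $\mathcal{M}_x$-curves pass. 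This is exactly where the (locally) unsplit hypothesis enters: a positive-dimensional family of curves all passing through the two fixed points $x$ and $y$ would, by Mori's bend-and-break lemma, degenerate into a reducible or non-reduced cycle, contradicting the properness of $\mathcal{M}_x$, which holds for general $x$ when $\mathcal{M}$ is locally unsplit and for every $x$ when $\mathcal{M}$ is unsplit. Hence $q|_{\mathcal{U}_x}$ is generically finite and the claim follows.

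Chaining the two displays gives $\dim{\rm Locus}(\mathcal{M}_x)\geq \delta + {\codim}_X{\rm Locus}(\mathcal{M}) - 1$, as asserted. For the moreover clause one repeats the argument verbatim: when $\mathcal{M}$ is unsplit, the fibre-dimension bound $\dim q^{-1}(x)\geq \dim\mathcal{U} - \dim{\rm Locus}(\mathcal{M})$ holds at \emph{every} point of the image, and the properness of $\mathcal{M}_x$ — hence the bend-and-break step — is available for every $x$, so the inequality holds for all $x\in{\rm Locus}(\mathcal{M})$ rather than only the general one.
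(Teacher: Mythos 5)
Your proposal is correct and takes essentially the same approach as the paper's source: the paper does not prove this proposition but quotes it from Koll\'ar's book [IV, Corollary~2.6], and your three ingredients — the deformation-theoretic bound $\dim\mathcal{M}\geq \deg_{(-K_X)}\mathcal{M}+\dim X-3$ via $\chi(\mathbb{P}^1,f^{*}T_X)$, the fibre-dimension count for the evaluation map $q$ (valid at every point of the image, general or not), and bend-and-break together with properness of $\mathcal{M}_x$ to force generic finiteness of $q|_{\mathcal{U}_x}$ — are exactly the ingredients of that standard argument, correctly assembled, including the observation that properness of $\mathcal{M}_x$ (automatic for all $x$ when $\mathcal{M}$ is unsplit, for general $x$ when it is only locally unsplit) is precisely what distinguishes the two cases.
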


\begin{proposition}[{\cite[Lemma~5.4]{ACO04}}]\label{prop:LocMY:dim} Let $X$ be a smooth projective variety and $Y$ an irreducible closed subset, and let $\mathcal{M}$ be an unsplit family of rational curves on $X$. Assume that curves in $Y$ are numerically independent of curves in $\cM$ and $Y \cap {\rm Locus}(\cM)$ is nonempty. Then we have
$$
\dim {\rm Locus}(\cM)_Y \geq \dim Y+\deg_{(-K_X)}\cM-1.
$$
\end{proposition}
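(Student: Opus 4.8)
The plan is to run the standard dimension count on the universal family of the unsplit family $\cM$, using the pointed estimate of Proposition~\ref{prop:Ion:Wis:2} as the engine and the numerical-independence hypothesis at two separate places. Write $p:\cU\to\cM$ for the $\P^1$-bundle and $q:\cU\to X$ for the evaluation. Since $\cM$ is unsplit it is proper, so the subfamily $\cM_Y:=p(q^{-1}(Y))$ of curves meeting $Y$ is closed in $\cM$, and by definition ${\rm Locus}(\cM)_Y=q(p^{-1}(\cM_Y))$. Throughout I would pass to irreducible components dominating the relevant images, so that every fibre-dimension estimate below refers to a general fibre, and I would record $\dim\cU=\dim\cM+1$.

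First I would use numerical independence to control the incidence $q^{-1}(Y)\to\cM_Y$. No $\cM$-curve can lie inside $Y$: if $C\subset Y$ were an $\cM$-curve, then $[C]$ would be simultaneously the class of a curve contained in $Y$ and an $\cM$-class, contradicting that these are numerically independent. Hence each $\cM$-curve meeting $Y$ does so in a finite set, so $p|_{q^{-1}(Y)}:q^{-1}(Y)\to\cM_Y$ is finite and $\dim\cM_Y=\dim q^{-1}(Y)$. On the other side, the fibre of $q|_{q^{-1}(Y)}$ over $y\in Y$ is finite over $\cM_y$, and since $\cM$ is unsplit the pointed estimate (Proposition~\ref{prop:Ion:Wis:2}) applies at every $y\in Y\cap{\rm Locus}(\cM)$, giving $\dim\cM_y\geq\deg_{(-K_X)}\cM-2$. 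Combining these two fibrations — and checking that $Y\cap{\rm Locus}(\cM)$ has the expected dimension — yields $\dim\cM_Y\geq\dim Y+\deg_{(-K_X)}\cM-2$, whence $\dim p^{-1}(\cM_Y)=\dim\cM_Y+1\geq\dim Y+\deg_{(-K_X)}\cM-1$.

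It then remains to pass from $p^{-1}(\cM_Y)$ to its image ${\rm Locus}(\cM)_Y$ under $q$ without losing dimension, that is, to show that the general fibre of $q|_{p^{-1}(\cM_Y)}$ is finite — equivalently, that only finitely many curves meeting $Y$ pass through a general point of ${\rm Locus}(\cM)_Y$. This is the main obstacle, and it is precisely where numerical independence enters a second time: the $\cM$-direction along which one leaves $Y$ is numerically transverse to the curve classes of $Y$, so the spray of $\cM$-curves off $Y$ genuinely contributes $\deg_{(-K_X)}\cM-1$ new dimensions rather than folding back into $Y$. Concretely I would analyse, for general $z\in{\rm Locus}(\cM)_Y$, the set $Y\cap{\rm Locus}(\cM_z)$ of points of $Y$ joined to $z$ by an $\cM$-curve, and argue from numerical independence that it is at most finite; feeding this back through the two fibration structures gives the additivity and hence $\dim{\rm Locus}(\cM)_Y\geq\dim Y+\deg_{(-K_X)}\cM-1$. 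The delicate point throughout is the general-position bookkeeping — guaranteeing that $Y\cap{\rm Locus}(\cM)$ and the evaluation fibres have the expected dimensions — which is exactly why the properness coming from unsplitness is indispensable.
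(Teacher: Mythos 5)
The paper itself gives no proof of Proposition~\ref{prop:LocMY:dim}; it is quoted from \cite[Lemma~5.4]{ACO04}. So your attempt can only be measured against the standard argument, whose skeleton (incidence variety $q^{-1}(Y)\subset\cU$, numerical independence used once to exclude $\cM$-curves inside $Y$ and once to control evaluation fibres) you have correctly identified. However, as written your argument proves a strictly weaker inequality, and the step you defer is false. Running the fibration $q^{-1}(Y)\to Y\cap{\rm Locus}(\cM)$ with the fibre bound $\dim\cM_y\geq\deg_{(-K_X)}\cM-2$ can only ever yield
$$
\dim\cM_Y\;\geq\;\dim\bigl(Y\cap{\rm Locus}(\cM)\bigr)+\deg_{(-K_X)}\cM-2,
$$
and the deferred ``check'' that $Y\cap{\rm Locus}(\cM)$ has dimension $\dim Y$ fails in general under the stated hypotheses. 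Concretely, let $X=Bl_{C}(\P^3)$ with $C$ a line, let $\cM$ be the family of fibres of the exceptional divisor $E\to C$ (an unsplit family with $\deg_{(-K_X)}\cM=1$ and ${\rm Locus}(\cM)=E$), and let $Y\cong\P^2$ be the strict transform of a plane containing $C$. Every curve in $Y$ has positive degree against the pullback of a plane class from $\P^3$, hence is numerically independent of the fibre class, yet $Y\cap E$ is a section of $E\to C$, so $\dim(Y\cap{\rm Locus}(\cM))=1<2=\dim Y$. Your count gives $\dim{\rm Locus}(\cM)_Y\geq 1$, whereas the proposition asserts (and this example attains, with ${\rm Locus}(\cM)_Y=E$) the bound $2$. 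The missing idea is a cancellation of two codimension corrections, both available in the paper: Proposition~\ref{prop:Ion:Wis:2} gives the stronger fibre bound $\dim\cM_y\geq\deg_{(-K_X)}\cM+\codim_X{\rm Locus}(\cM)-2$ at \emph{every} $y$ (you dropped the codimension term), while Serre's inequality (Lemma~\ref{lem:Serre}, using smoothness of $X$ and closedness of ${\rm Locus}(\cM)$) gives $\dim(Y\cap{\rm Locus}(\cM))\geq\dim Y-\codim_X{\rm Locus}(\cM)$; adding the two, the codimension terms cancel and one gets $\dim\cM_Y\geq\dim Y+\deg_{(-K_X)}\cM-2$, as needed. (Alternatively one can bound every component of $q^{-1}(Y)=\cU\times_X Y$ below by $\dim\cU+\dim Y-\dim X$ and use the deformation-theoretic estimate $\dim\cM\geq\deg_{(-K_X)}\cM+\dim X-3$.) Without one of these inputs the proof does not close.

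The final step also has a genuine gap: finiteness of the general fibre of $q|_{p^{-1}(\cM_Y)}$ is \emph{not} equivalent to finiteness of $Y\cap{\rm Locus}(\cM_z)$. The fibre over $z$ consists of the $\cM$-curves through $z$ that meet $Y$, and even if those curves hit $Y$ in finitely many points, infinitely many of them could pass through one and the same point of $Y$. You need two separate facts: first, that $Y\cap{\rm Locus}(\cM_z)$ is finite for $z\notin Y$, which does not follow from numerical independence alone but from combining it with the proportionality statement that every curve contained in ${\rm Locus}(\cM_z)$ is numerically proportional to $\cM$-curves (Lemma~\ref{lem:LocMY} applied to the single point $z$); and second, that an unsplit family contains only finitely many curves through two fixed distinct points, which is Mori's bend-and-break together with properness of $\cM$. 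With these, the fibres over $z\notin Y$ are finite, and since no component of $p^{-1}(\cM_Y)$ maps into $Y$ (its image is a union of $\cM$-curves, none contained in $Y$), the image retains full dimension. These are standard ingredients, but they are the actual content of this step rather than bookkeeping, and the ``equivalently'' in your write-up papers over exactly the point where they enter.
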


\begin{remark}\label{rem:suzuki} {Although Proposition 2.10 (originally stated as {\cite[Lemma~5.4]{ACO04}}) does not require $Y$ to be irreducible, this condition is in fact
necessary. To see this, assume that $Y$ consists of two irreducible
components, $Y_1$ and $Y_2$, where only $Y_1$ meets ${\rm Locus}(\mathcal M)$. In this
case, increasing the dimension of $Y_2$ has no effect on the left-hand
side but arbitrarily enlarges the right-hand side of the inequality, thus invalidating the estimate. }
\end{remark}

\begin{lemma}[{\cite[Lemma~3.2]{Occ06}}]\label{lem:LocMY} Let $X$ be a smooth projective variety and $Y$ a closed subset, and let $\mathcal{M}$ be an unsplit family of rational curves on $X$. If ${\rm Locus}(\cM)\cap Y$ is nonempty, then {${\rm Locus}(\cM)_Y$} is closed in $X$. Moreover, for any curve $C\subset {\rm Locus}(\cM)_Y$, there exist a non-negative rational number $a$, a rational number $b$, and a curve $C_Y$ contained in $Y$ and an $\cM$-curve $C_{\cM}$ that satisfy the following
$$
[C]= a[C_Y]+b[C_{\cM}]\in N_1(X).
$$
\end{lemma}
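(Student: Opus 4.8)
The plan is to prove the two assertions separately: the closedness of ${\rm Locus}(\cM)_Y$ follows formally from properness, while the numerical relation is obtained by transporting the problem to a ruled surface over a curve in the parameter space.

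First I would establish closedness. Since $\cM$ is unsplit, it is proper, so both the $\P^1$-bundle $p:\cU\to\cM$ and the evaluation morphism $q:\cU\to X$ are proper. The locus of members meeting $Y$ is $\cM_Y:=p(q^{-1}(Y))$, which is closed because $q^{-1}(Y)$ is closed and $p$ is proper. By definition ${\rm Locus}(\cM)_Y=q(p^{-1}(\cM_Y))$, and since $p^{-1}(\cM_Y)$ is closed and $q$ is proper, its image is closed in $X$.

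For the numerical relation I may assume $C$ is irreducible. As $C\subset {\rm Locus}(\cM)_Y=q(p^{-1}(\cM_Y))$, I can choose an irreducible curve $W\subset p^{-1}(\cM_Y)\subset\cU$ dominating $C$ under $q$, and set $\Gamma:=\overline{p(W)}\subset\cM_Y$. If $\dim\Gamma=0$, then $W$ lies in a single $\P^1$-fiber of $p$, so $C$ is a single $\cM$-curve and $[C]=[C_{\cM}]$, which is the claim with $a=0$. Assume $\dim\Gamma=1$, let $\widetilde\Gamma\to\Gamma$ be the normalization, and form the geometrically ruled surface $S$ obtained by normalizing $p^{-1}(\Gamma)\times_\Gamma\widetilde\Gamma$; it carries a $\P^1$-bundle structure over $\widetilde\Gamma$ with fiber class $F$ and a natural morphism $g:S\to X$ induced by $q$. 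Lifting $W$ gives a curve $\widetilde W\subset S$, not contained in a fiber, with $g(\widetilde W)=C$. Since every member parametrized by $\Gamma$ meets $Y$, the closed set $q^{-1}(Y)\cap p^{-1}(\Gamma)$ surjects onto $\Gamma$, and lifting a component dominating $\widetilde\Gamma$ yields a multisection $\Sigma\subset S$, also not contained in a fiber, with $g(\Sigma)\subset Y$. Because the Picard number of $S$ is two, the classes $F$ and $[\Sigma]$ span $N_1(S)_\Q$, so I write $[\widetilde W]=a'[\Sigma]+b'F$. Intersecting with $F$ and using $F^2=0$ gives $a'=(\widetilde W\cdot F)/(\Sigma\cdot F)$, which is positive since both quantities are the positive fiber degrees over $\widetilde\Gamma$. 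Applying $g_*$, using that $g_*F$ is a positive multiple of the class of an $\cM$-curve $C_{\cM}$, that $g_*[\Sigma]$ is a (possibly zero) multiple of the class of $C_Y:=g(\Sigma)\subset Y$, and that $g_*[\widetilde W]$ is a positive multiple of $[C]$, I can solve for $[C]$ and obtain $[C]=a[C_Y]+b[C_{\cM}]$ with $a\ge 0$ (the first term being absent when $g(\Sigma)$ is a point).

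The main obstacle is the numerical part: the closedness and the dichotomy on $\dim\Gamma$ are formal, but the crux is the reduction to the ruled surface $S$ together with the bookkeeping there, namely verifying that $\widetilde W$ and $\Sigma$ are genuine non-fiber curves, that $g_*F$ is exactly a multiple of the $\cM$-curve class, and above all that the coefficient $a$ comes out non-negative. The sign of $a$ is forced by the positivity of the intersection with the fiber class, which is the key structural input, whereas the sign of $b$ remains unconstrained, in agreement with the statement.
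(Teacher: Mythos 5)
The paper does not prove this lemma at all: it is quoted from \cite{Occ06}, so there is no internal proof to compare against. Your argument is correct, and it is in substance the standard proof from the cited literature (the ruled-surface technique going back to Koll\'ar's book \cite{Kb} and used in \cite{ACO04}, \cite{Occ06}): closedness follows formally from properness of $p$ and $q$ for an unsplit family, together with the identification ${\rm Locus}(\cM)_Y=q(p^{-1}(\cM_Y))$ for $\cM_Y=p(q^{-1}(Y))$; the numerical relation is obtained by restricting the universal family to a one-dimensional $\Gamma\subset\cM_Y$, base-changing to the normalization $\widetilde\Gamma$ to get a geometrically ruled surface $S$ with cyclic-rank-two $N_1(S)$, writing $[\widetilde W]=a'[\Sigma]+b'F$ there, and pushing forward along $g:S\to X$. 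The sign analysis is exactly right: $a'=(\widetilde W\cdot F)/(\Sigma\cdot F)>0$ since both curves dominate $\widetilde\Gamma$ and $F^2=0$, while $g_*F$ is a positive multiple of an $\cM$-curve class (the evaluation maps each fiber onto the corresponding member of the family), $g_*[\Sigma]$ is a non-negative multiple of the class of a curve in $Y$, and $g_*[\widetilde W]$ is a positive multiple of $[C]$, which yields $a\geq 0$ with $b$ unconstrained, as in the statement.

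Two small points should be tightened, though neither is a genuine gap. First, the opening claim that you ``may assume $C$ is irreducible'' is not a formal reduction: for a reducible curve, the decompositions of the components may involve curves in $Y$ with numerically independent classes, and these cannot in general be recombined into a single term $a[C_Y]$. The lemma is really a statement about irreducible curves (and that is how it is applied in this paper, namely to extremal rational curves), so you should fix this convention rather than assert a reduction. Second, the irreducible component of $q^{-1}(Y)\cap p^{-1}(\Gamma)$ dominating $\Gamma$ may be two-dimensional (for instance when all of $p^{-1}(\Gamma)$ maps into $Y$); your $\Sigma$ should then be taken to be a curve inside that component dominating $\widetilde\Gamma$ (e.g.\ a general hyperplane-section component), after which the intersection computation and the pushforward bookkeeping go through verbatim.
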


\begin{lemma}[{\cite[Lemma~2.7]{Wat21}}]\label{lem:degeneration:curves} Let $X$ be a smooth projective variety and $\cM$ a family of rational curves on $X$. If $\cM$ is not unsplit, then there exists a rational $1$-cycle $Z=\sum_{i=1}^s a_i Z_i$ satisfying the following conditions:
\begin{enumerate}
\item $Z$ is algebraically equivalent to $\cM$-curves, where each $a_i$ is a positive integer and each $Z_i$ is a rational curve;
\item $\sum_{i=1}^s a_i\geq 2$.
\end{enumerate}
\end{lemma}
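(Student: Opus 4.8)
The plan is to realize $Z$ as a degenerate limit of the $\cM$-curves inside the Chow variety, and to read off the three required properties from the failure of properness. First I would use that $\cM\subset{\rm RatCurves}^n(X)$ carries its universal $\P^1$-bundle $p\colon\cU\to\cM$ and evaluation $q\colon\cU\to X$, and that there is a natural morphism ${\rm RatCurves}^n(X)\to\Chow(X)$ sending each parametrized irreducible rational curve to its associated reduced $1$-cycle of multiplicity one. Since $\cM$ is not unsplit it is not proper, so the valuative criterion supplies a smooth curve $T$, a point $0\in T$, and a morphism $g\colon T\setminus\{0\}\to\cM$ that does not extend across $0$; pulling back $\cU$ and applying $q$ then produces a family $\{C_t\}_{t\neq 0}$ of irreducible $\cM$-curves on $X$.

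Next I would take the closure $\mathcal C\subset T\times X$ of $\bigcup_{t\neq0}\{t\}\times C_t$. Being the closure of the generic fibres, $\mathcal C$ has no component lying over $0$, so $\mathcal C\to T$ is flat over the regular one-dimensional base $T$, and the arithmetic genus is therefore constant in the family. Since $\mathcal C$ is irreducible and proper over $T$ with irreducible generic fibre, the central fibre $\mathcal C_0$ is connected; as $p_a(\mathcal C_0)=p_a(\P^1)=0$, it is a connected curve of arithmetic genus zero, whose components are rational by the standard structure of degenerations of rational curves. Writing its associated cycle as $Z=\mathcal C_0=\sum_{i=1}^s a_iZ_i$ then yields rational curves $Z_i$ with positive integers $a_i$, and because $T$ is irreducible the cycle $Z$ is algebraically equivalent to $C_t$ for $t\neq0$, each of which is an $\cM$-curve. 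This establishes condition (i).

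It remains to prove (ii), namely $\sum_i a_i\geq2$, and this is the step I expect to be the main obstacle. If $\sum_i a_i=1$, then $\mathcal C_0$ is a single reduced irreducible rational curve and hence corresponds to a point of ${\rm RatCurves}^n(X)$; by separatedness of the parameter space this point would extend $g$ across $0$, and the extension would land in $\cM$ since $\cM$ is a connected component and $\mathcal C_0$ is a limit of $\cM$-curves, contradicting the non-extendability of $g$. The delicate part is exactly this comparison between the flat limit $\mathcal C_0$ and the Chow-theoretic limit of the $\cM$-curves: one must invoke the precise behaviour of ${\rm RatCurves}^n(X)\to\Chow(X)$ over the locus of irreducible reduced curves to guarantee that a reduced irreducible rational limit really does lift back into the component $\cM$ rather than into some other component. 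Granting this, $\sum_i a_i\geq2$ follows, completing the argument.
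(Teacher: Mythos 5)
The paper itself does not prove this lemma---it imports it verbatim from \cite{Wat21}---so your proposal can only be measured against the standard argument (non-properness of $\cM$ produces a boundary point of the closure of its image in $\Chow(X)$, whose cycle is the required $Z$). Your strategy is exactly that standard one, but it contains one incorrect step and one step that is assumed rather than proved. The incorrect step is the genus computation. The fibres of $\mathcal{C}\to T$ over $t\neq 0$ are the image curves $C_t=q(\cU_{g(t)})\subset X$, \emph{not} copies of $\P^1$: the evaluation map restricted to a fibre of $p$ is the normalization of its image, so $C_t$ may be singular with $p_a(C_t)>0$ (take for $\cM$ a family of nodal plane cubics, where $p_a(C_t)=1$). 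Flatness gives $p_a(\mathcal{C}_0)=p_a(C_t)$, not $p_a(\mathcal{C}_0)=0$, so the rationality of the components $Z_i$ does not follow from your argument; note also that the flat limit $\mathcal{C}_0$ need not be reduced, so even arithmetic genus zero would not immediately yield a tree of $\P^1$'s. The standard repair works with the maps rather than the images: pull back the universal $\P^1$-bundle over $T\setminus\{0\}$, compactify and resolve to obtain a smooth surface $S$ proper over $T$ with a morphism $\bar F\colon S\to X$ extending the evaluation; the central fibre $S_0$ is a tree of smooth rational curves, and compatibility of proper pushforward with specialization identifies the limit cycle with $\bar F_*[S_0]$, whose components are images of rational curves, hence rational (this is the content of the degeneration results in \cite{Kb}).

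The second problem is in (ii). You appeal to ``separatedness of the parameter space'' to extend $g$ across $0$, but separatedness gives \emph{uniqueness} of a limit, never existence. What the argument actually needs---precisely the point you flag as delicate and then simply grant---is that ${\rm RatCurves}^n(X)\to\Chow(X)$ is finite (it is a normalization map) over the open locus of Chow points of irreducible, reduced rational curves. Finiteness implies properness, so when $\sum a_i=1$ the extended map $T\to\Chow(X)$, which exists by properness of $\Chow(X)$ and lands at $[Z]$ inside that locus, lifts by the valuative criterion to $T\to{\rm RatCurves}^n(X)$; the lift lands in $\cM$ because $\cM$ is an irreducible component of ${\rm RatCurves}^n(X)$, hence closed, and this contradicts the non-extendability of $g$. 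As written, your proposal neither establishes the rationality claim in (i) nor closes the granted step in (ii), so it is incomplete, although both gaps are repairable along the lines above.
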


\subsection{Specific calculation of pseudoindex} In this subsection, we explicitly calculate the pseudoindex of the {smooth} Fano varieties appearing in Theorem~\ref{them:main}. Although this result may be known to experts, we present it for the convenience of the reader.

\begin{example}\label{eg:ps1} 
Let $X$ be one of the smooth Fano varieties as in Theorem~\ref{them:main}, other than cases (i), (iv), and (viii). We claim that the Fano index and the pseudoindex of $X$ are equal to $\frac{n}{2}$. By Theorem~\ref{them:Wis}, it is sufficient to show that $-K_X$ is divisible by $\frac{n}{2}$ in ${\rm Pic}X$. 

Let $X$ be the blow-up of $Q^n$ along a linear subspace $\P^{{\frac{n}{2} - 1}}$ of $Q^n$, that is, $$\varphi: X:=Bl_{\P^{{\frac{n}{2} - 1}}}(Q^n) \to Q^n.$$ By the canonical bundle formula for blow-ups, we have $$-K_X=\frac{n}{2}\left(2\varphi^{\ast}H-E\right),$$ where $H$ is a hyperplane section of $Q^n$ and $E$ is the exceptional divisor of $\varphi$. {Thus $-K_X$ is divisible by $\frac{n}{2}$ in ${\rm Pic}X$.} The same holds true when $X$ is the variety as in (iii) of Theorem~\ref{them:main}. Similarly, for varieties in cases (v)-(vii) or (iv) of Theorem~\ref{them:main}, the canonical bundle formula for projective bundles gives $\iota_X={ \rm i_X}=\frac{n}{2}$. 

{For the variety $X$ in (viii) of Theorem~\ref{them:main}, we see that $\iota_X=2$ and $ \rm i_X=1$.}
\end{example}

\begin{example}\label{eg:ps2} Let $X$ be the blow-up of $\P^n$ along a linear subspace $\P^t$ with $t \leq \frac{n}{2}-1$, that is, 
$$
X=Bl_{\P^t}(\P^n)=\P(\cO_{\P^{n-t-1}}(1)\oplus\cO_{\P^{n-t-1}}^{\oplus t+1}).
$$
Following the idea as in the proof of \cite[Theorem~1]{Tsu12}, we compute the pseudoindex of $X$. 
We denote the blow-up morphism by $$\varphi: X=Bl_{\P^t}(\P^n) \to \P^n.$$ We denote the $\P^{t+1}$-bundle morphism by $$\psi: X=\P(\cO_{\P^{n-t-1}}(1)\oplus\cO_{\P^{n-t-1}}^{\oplus t+1}) \to \P^{n-t-1}.$$ Let $E$ be the exceptional divisor of $\varphi$; then it can be written as 
$$
E=\P({{N^\vee_{\P^t/\P^n}}})=\P(\cO_{\P^t}(-1)^{\oplus n-t})=\P^{n-t-1}\times \P^t.
$$
We denote by $\cO_E(1)$ the tautological line bundle of $E=\P({{N^\vee_{\P^t/\P^n}}})$. Then we have $\cO_E(E)=\cO_E(-1)$. 
For a line $\ell \subset \P^t$, set $C':=\P(\cO_{\ell}(-1))\subset E$, which is an extremal rational curve of $\psi$. Then we have 
$$
E\cdot C'=\cO_E(-1)\cdot C'=(-1)\cdot (-1)=1.
$$
We denote by $C$ a line in a fiber of $\varphi|_E: E \to \P^t$. This is an extremal rational curve of $\varphi$. Then we have 
\begin{eqnarray}\label{eq:ext:curve}
-K_X\cdot C=n-t-1,\quad -K_X\cdot C'=t+2.
\end{eqnarray}
For any curve $\Gamma \subset X$, we claim 
\begin{eqnarray}\label{eq:-KX}
-K_X\cdot \Gamma \geq \min\{n-t-1, t+2\}.
\end{eqnarray}
We will prove this by dividing it into two cases. 

Firstly, assume that $\Gamma$ is not contained in $E$. In this case, we may take a hyperplane $H\subset \P^n$ such that $\P^t \subset H$ and $\varphi(\Gamma) \not\subset H$. For the strict transform $\tilde{H}$ of $H$, we have $\varphi^{\ast}H\cdot \Gamma=(E+\tilde{H})\cdot \Gamma \geq E\cdot \Gamma$. This yields
$$
-K_X\cdot \Gamma=\left\{(n+1)\varphi^{\ast}H-(n-t-1)E \right\}\cdot \Gamma \geq (t+2)E\cdot\Gamma\geq t+2.
$$

Secondly, assume that $\Gamma$ is contained in $E\cong \P^{n-t-1}\times \P^t$. In this case, there exist non-negative integers $a$ and $b$ such that $[\Gamma]= a[C]+a[C'] \in N_1(X)$ and $(a,b)\neq (0,0)$. By the equalities (\ref{eq:ext:curve}), we have 
$$
-K_X\cdot \Gamma=a(n-t-1)+b(t+2)\geq \min \{n-t-1, t+2\}.
$$
Thus, our claim holds. Combining the inequality (\ref{eq:-KX}) with the equalities (\ref{eq:ext:curve}), we obtain $\iota_X=\min\{n-t-1, t+2\}.$ This implies that $\iota_X=\frac{n}{2}$ if and only if $t=\frac{n}{2}-1$ or $\frac{n}{2}-2$. In particular, the pseudoindex of {smooth} Fano varieties as in (i) and (iv) of Theorem~\ref{them:main} are both $\frac{n}{2}$. 

{{If $X$ is (i) or (iv), then, by the canonical bundle formula for projective bundles, we have ${\rm i}_X=1$.}}
\end{example}

\section{Special cases}

\begin{lemma}\label{lem:n=4} Let $X$ be a $4$-dimensional smooth Fano variety. We assume that $\iota_X=2$ and $\rho_X>1$. If $X$ admits an elementary birational contraction, then $X$ is isomorphic to one of the varieties described in Theorem~\ref{them:main}.
\end{lemma}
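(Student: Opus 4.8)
The plan is to analyse the extremal ray $R$ contracted by $\varphi$ through the length inequality of Proposition~\ref{prop:Ion:Wis}, and then invoke the structure theorems of Section~2 to recognise $X$ as a blow-up or a projective bundle. Write $\ell:=\ell(R)$; since $-K_X$ is ample every extremal ray is $K_X$-negative, so $\ell\ge\iota_X=2$. First I would exclude small contractions: if $\varphi$ were small, then its exceptional locus $E$ and a non-trivial fibre $F$ would satisfy $\dim E\le n-2=2$ and $\dim F\le 2$, contradicting $\dim E+\dim F\ge n+\ell-1\ge 5$. Hence $\varphi$ is divisorial, $E$ is a prime divisor with $\dim E=3$, and Proposition~\ref{prop:Ion:Wis} gives $\dim F\ge\ell$ for every fibre $F$; thus $\ell\le 3$ and $\dim\varphi(E)\le 3-\ell\le 1$. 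This leaves exactly three configurations: (a) $\dim\varphi(E)=1$, forcing $\ell=2$ and all fibres two-dimensional; (b) $\dim\varphi(E)=0$ and $\ell=3$, so $E$ is a three-dimensional fibre of dimension $\ell$; and (c) $\dim\varphi(E)=0$ and $\ell=2$, the ``jumping'' case in which $E$ is a three-dimensional fibre with $\dim F=3>\ell$.

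In configurations (a) and (b) the fibres are equidimensional of dimension $\ell$, so Theorem~\ref{them:AO02} presents $X$ as a blow-up $Bl_WX'$ of a smooth fourfold $X'$ along a smooth centre $W=\varphi(E)$ of codimension $\ell+1$ — a point in case (b), a curve in case (a) — and a standard argument shows that $X'$ is again Fano. In case (b) one has $-K_X=\varphi^{\ast}(-K_{X'})-3E$, and ampleness of $-K_X$ together with $\iota_X=2$ forces every rational curve of $X'$ through $W$ to have anticanonical degree $\ge 5$ (a curve through $W$ with multiplicity $m\ge1$ has strict transform of degree $-K_{X'}\cdot C'-3m\ge\iota_X=2$), while Mori's theorem guarantees a rational curve through $W$ of degree $\le n+1=5$. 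Thus the minimal family through $W$ has degree $n+1$; it is unsplit and covering, whence $X'\cong\P^4$ by Theorem~\ref{them:fano:min:fam}(ii), giving case (iv).

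In case (a) I would run the analogous pushforward argument: for a minimal dominating family $\cM$ on $X$ one has $-K_{X'}\cdot\varphi_{\ast}C=\deg_{(-K_X)}\cM+2(E\cdot C)$, and Theorem~\ref{them:fano:min:fam} applied to $X'$, sharpened by the condition that $Bl_WX'$ be Fano of pseudoindex $2$, should restrict $(X',W)$ to $(\P^4,\text{a line})$, $(Q^4,\text{a line})$, $(Q^4,\text{a conic not lying in a plane})$, and $(\P^1\times\P^3,\,\P^1\times\{\mathrm{pt}\})$, i.e. cases (i), (ii), (iii) and (viii). For the jumping case (c) I would instead study $E$ intrinsically: $-K_X|_E$ is ample and $\cO_E(-E)$ is ample (as $\varphi$ contracts $E$ to a point), so by adjunction $-K_E=-K_X|_E+\cO_E(-E)$ is ample and $E$ is a Fano threefold, swept out by the unsplit family of extremal curves $C$ with $a:=\cO_E(-E)\cdot C\ge1$ and $-K_E\cdot C=\ell+a=2+a$. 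Applying Theorem~\ref{them:fano:min:fam} on $E$ gives $2+a\le 4$, hence $a\in\{1,2\}$, and identifies $(E,\cO_E(-E))$ as $(\P^3,\cO(2))$ when $a=2$ and $(Q^3,\cO(1))$ when $a=1$; the blow-up alternative of Theorem~\ref{them:fano:min:fam}(iii) is excluded since its degree-one curve $C_0$ would give $-K_X\cdot C_0=-K_E\cdot C_0-\cO_E(-E)\cdot C_0\le 1-1=0$, contradicting ampleness of $-K_X$. Finally, since $\rho_X>1$ and small contractions are impossible, $X$ carries a second elementary contraction that is either birational — reducing us to the previous cases — or of fibre type, in which case Theorem~\ref{them:HN13} makes it a $\P^1$-bundle whose base is identified with $E$; this recovers $\P(\cO_{\P^3}(2)\oplus\cO_{\P^3})$ and $\P(\cO_{Q^3}(1)\oplus\cO_{Q^3})$, cases (v) and (vi).

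The main obstacle I expect is the identification in the curve case (a). Unlike case (b), the minimal dominating family need not have degree $2$, and $E\cdot C$ may vanish — as it does for $\P^1\times\P^3$, where the family is horizontal over the exceptional divisor — so the pushforward degree does not automatically reach the extremal value in Theorem~\ref{them:fano:min:fam}. Controlling $X'$ there requires a careful case analysis according to $\iota_{X'}$, using the sharp numerical constraints imposed by the requirement that $Bl_WX'$ be Fano of pseudoindex exactly $2$ to pin down both $X'$ and the degree of the centre $W$ (in particular to exclude conics in $\P^4$ or higher-degree curves). A secondary technical point, in case (c), is to verify that the fibres of the second fibre-type contraction are equidimensional, so that Theorem~\ref{them:HN13} genuinely yields a $\P^1$-bundle.
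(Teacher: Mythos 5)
Your route is fundamentally different from the paper's: the paper settles the four-dimensional case in one line by citing the existing classifications \cite[Theorem~6.2]{Wat24} and \cite{Wis90}, whereas you attempt a self-contained geometric proof. Parts of your plan are sound: the exclusion of small contractions is correct (for $n=4$ the Ionescu--Wi\'sniewski inequality is violated outright), the trichotomy (a)/(b)/(c) according to $\dim\varphi(E)$ and $\ell(R)$ is the right decomposition, and case (b) is essentially complete, modulo the fact that the paper's Theorem~\ref{them:fano:min:fam}(ii) is stated for locally unsplit families, so you should really invoke the pointwise Cho--Miyaoka--Shepherd-Barron criterion of \cite{CMSB}, \cite{Keb02} (all rational curves through $p$ have degree $\geq n+1$) rather than claim that the relevant family is ``unsplit and covering''. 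However, the proposal contains genuine gaps in the two remaining cases, and your own closing paragraph concedes this.

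In case (a) --- the blow-up along a curve, which produces most of the output list, namely (i), (ii), (iii) and (viii) --- you never prove the classification: you write that the pushforward argument ``should restrict $(X',W)$'' to the four expected pairs and defer the ``careful case analysis'' that would pin down $X'$ and $W$; that analysis \emph{is} the content of this case. Moreover its starting point, that $X'$ is Fano, is not a formal consequence of $X$ being Fano when the centre is a curve: $-K_{X'}$ is automatically positive on curves not contained in $W$, but its positivity on $W$ itself requires an estimate on sub-line bundles of $N_{W/X'}$ that genuinely uses $\iota_X=2$; as written this is asserted, not proved. Note that this entire case can be closed with the paper's own Theorem~\ref{them:AO:blup}: once Theorem~\ref{them:AO02} exhibits $\varphi$ as a smooth blow-up along a curve $T$, the hypothesis $\iota_X\geq\dim T+1$ reads $2\geq 2$, so Theorem~\ref{them:AO:blup} applies verbatim and yields the list, after which one filters by $\iota_X=2$ as in Examples~\ref{eg:ps1} and~\ref{eg:ps2}. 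In case (c) there is a second unaddressed gap: you apply Theorem~\ref{them:fano:min:fam} to the exceptional divisor $E$, but that theorem concerns \emph{smooth} varieties, and nothing in your argument shows that $E$ (an irreducible divisor with jumping fibre dimension, contracted to a point) is smooth or even normal --- even the adjunction formula $-K_E=(-K_X-E)|_E$ needs $E$ to be Gorenstein and normal. In addition, as you yourself note, the equidimensionality required to invoke Theorem~\ref{them:HN13} for the second contraction, and the identification of its base with $E$ (which needs, e.g., \cite[Theorem~4.1]{Laz84} applied to the finite surjection $E\to Z$), are left unverified. So the proposal is a reasonable plan but not a proof; to complete it you must either supply these missing arguments or, as the paper does, fall back on the known four-dimensional classifications.
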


\begin{proof} This is a direct consequence of {\cite[Theorem~6.2]{Wat24}} and \cite{Wis90}.
\end{proof}

\begin{lemma}\label{lem:pic3} Let $X$ be an $n$-dimensional smooth Fano variety. We assume that $\iota_X=\frac{n}{2}\geq 3$ and $\rho_X\geq 3$. Then $X$ is isomorphic to $(\P^2)^3$.
\end{lemma}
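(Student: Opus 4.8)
The plan is to recognize Lemma~\ref{lem:pic3} as a special case of the generalized Mukai conjecture (Conjecture~\ref{conj:GM}) in the range where it is already a theorem, namely for Fano manifolds with $\iota_X \geq \frac{n+3}{3}$ (the main result of \cite{ACO04}), and then to read off the numerical consequences. First I would verify that our hypotheses place us exactly in this range: since $\iota_X = \frac{n}{2}$, the condition $\iota_X \geq \frac{n+3}{3}$ is equivalent to $\frac{n}{2} \geq 3$, i.e.\ to $n \geq 6$, which is precisely the assumption $\iota_X = \frac{n}{2} \geq 3$.

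With the inequality $\rho_X(\iota_X - 1) \leq n$ in hand, I would extract the numerical constraints. Substituting $\iota_X = \frac{n}{2}$ and using $\rho_X \geq 3$ gives $3\bigl(\tfrac{n}{2} - 1\bigr) \leq \rho_X(\iota_X - 1) \leq n$, hence $\tfrac{n}{2} \leq 3$, so $n \leq 6$; together with $n \geq 6$ this forces $n = 6$ and $\iota_X = 3$. Feeding $n = 6$ back into $\rho_X(\iota_X - 1) = 2\rho_X \leq 6$ yields $\rho_X \leq 3$, and combined with $\rho_X \geq 3$ we conclude $\rho_X = 3$. In particular every inequality above is an equality, so $\rho_X(\iota_X - 1) = n$ holds, and the equality clause of Conjecture~\ref{conj:GM} in this range (again \cite{ACO04}) applies and yields $X \cong (\P^{\iota_X - 1})^{\rho_X} = (\P^2)^3$, as claimed.

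It is worth recording how the inequality half is obtained, since that is what makes the range accessible through the tools of Section~2. As $X$ is Fano with $\rho_X \geq 3$, the cone $\overline{NE}(X)$ is polyhedral of dimension $\rho_X$, so one can select extremal rays $R_1, R_2, R_3$ with numerically independent classes; for each, a family $\cM^i$ of minimal anticanonical degree with class in $R_i$ is unsplit — any degeneration as in Lemma~\ref{lem:degeneration:curves} would produce rational curves of strictly smaller degree still lying in the extremal ray $R_i$ — and satisfies $\deg_{(-K_X)}\cM^i = \ell(R_i) \geq \iota_X$. Building the chain locus by applying Proposition~\ref{prop:LocMY:dim} successively, at the $i$-th step to the irreducible locus produced by the previous families (whose curves span only $\langle [\cM^1], \dots, [\cM^{i-1}]\rangle$ by Lemma~\ref{lem:LocMY}, so that the numerical independence from $\cM^i$ required by the hypothesis holds), produces a subvariety of $X$ of dimension at least $\sum_{i=1}^{3} \bigl(\deg_{(-K_X)}\cM^i - 1\bigr) \geq 3(\iota_X - 1)$, whence $3(\iota_X - 1) \leq n$.

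The main obstacle is the equality case rather than the inequality: pinning down $n$, $\iota_X$, and $\rho_X$ is elementary once three independent unsplit families are available, but deducing the rigid splitting $X \cong (\P^2)^3$ from $\rho_X(\iota_X - 1) = n$ is the substantive step carried out in \cite{ACO04}, where the families are shown to define fiber-type contractions assembling into the projections of the product. A secondary point demanding care in the chain construction is the irreducibility of the intermediate loci needed to invoke Proposition~\ref{prop:LocMY:dim}, exactly the hypothesis highlighted in Remark~\ref{rem:suzuki}.
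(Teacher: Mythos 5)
Your proof is correct and takes essentially the same route as the paper: the paper's proof likewise consists of observing that $\iota_X=\frac{n}{2}\geq 3$ places $X$ in a range where the generalized Mukai conjecture is a known theorem — it cites Novelli's result \cite{Nov16} (threshold $\iota_X\geq\frac{n+2}{3}$) rather than \cite{ACO04} (threshold $\iota_X\geq\frac{n+3}{3}$), and both thresholds are met since $n\geq 6$ — and then reads off the conclusion. The explicit arithmetic pinning down $n=6$, $\iota_X=3$, $\rho_X=3$ and invoking the equality case, which you spell out, is left implicit in the paper's two-line proof, and your supplementary sketch of the chain-locus argument behind the inequality is consistent with how \cite{ACO04} proves it.
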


\begin{proof} We have $\iota_X=\frac{n}{2}\geq \frac{n+2}{3}$. Then, our assertion follows from \cite[Theorem]{Nov16}.
\end{proof}

To prove Theorem~\ref{them:main}, by using Lemma~\ref{lem:n=4} and Lemma~\ref{lem:pic3}, we need only to address the case where $n=\dim X \geq 6$ and $\rho_X=2$. As we will see in Theorem~\ref{them:main1} and Lemma~\ref{lem:dimF:l(R)} below, under the assumptions and notation of Theorem~\ref{them:main}, $X$ admits either a blow-up structure or a projective bundle structure. Therefore, the remainder of this section deals with those cases.

 \begin{theorem}[{\cite[Theorem~1.3]{AO05}}]\label{them:AO:blup} Let $X$ be an $n$-dimensional smooth Fano variety and let $R$ be an extremal ray of $\overline{NE}(X)$. Assume that the associated contraction $\varphi_R : X \to Y$ is the blow-up of a smooth variety $Y$ along a smooth subvariety $T \subset Y$ such that $\iota_X + \ell(R) \geq n$ or equivalently $\iota_X\geq \dim T+1$.
Then $X$ is isomorphic to one of the following:
\begin{enumerate}
    \item $Bl_{\P^t}(\P^n)$, with $\P^t$ a linear subspace of dimension $\leq \frac{n}{2} - 1$,
    \item $Bl_{\P^t}(Q^n)$, with $\P^t$ a linear subspace of dimension $\leq \frac{n}{2} - 1$,
    \item $Bl_{Q^t}(Q^n)$, with $Q^t$ a smooth quadric of dimension $\leq \frac{n}{2} - 1$ not contained in a linear subspace of $Q^n$,
    \item {$Bl_{Y\sqcup\{p\}}(\P^n)$ where $Y$ is a smooth subvariety of dimension $n-2$ and degree $\leq n$ contained in a hyperplane $H\subset \P^n$ such that $p \notin H$,}
    \item $Bl_{\P^1 \times \{p\}}(\P^1 \times \P^{n-1})$.
\end{enumerate}
\end{theorem}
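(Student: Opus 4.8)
The plan is to reduce the statement to two separate problems — identifying the base $Y$ and then identifying the center $T$ — each attacked with the rational-curves machinery recalled above. I would start with the numerology. By Theorem~\ref{them:AO02} the non-trivial fibers of $\varphi_R$ are projective spaces $\P^{\ell}$ with $\ell := \ell(R) = \codim_Y T - 1 = n - \dim T - 1$, which is precisely what makes the two displayed hypotheses equivalent. A line inside such a fiber is a rational curve of anticanonical degree $\ell$, so $\iota_X \le \ell$, while the hypothesis reads $\iota_X \ge \dim T + 1 = n - \ell$. Combining these gives $\ell \ge \tfrac{n}{2}$, equivalently $\dim T \le \tfrac{n}{2} - 1$, which already produces the dimension bounds in conclusions (i)–(iii).

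The heart of the argument is to show that $Y$ is $\P^n$, $Q^n$, a product $\P^1 \times \P^{n-1}$, or a blow-up of $\P^n$. Since a divisorial contraction lowers the Picard number, $\rho_X \ge 2$, so the cone theorem furnishes a second extremal ray $R'$; I would first argue, using the cone theorem together with the largeness of $\ell$, that $Y$ is again a smooth Fano variety. The driving heuristic is that because $\ell \ge \tfrac n2$ is large, passing a curve through the center is expensive: writing $-K_X = \varphi_R^{\ast}(-K_Y) - \ell E$, any curve $B$ on $Y$ meeting $T$ has strict transform $\tilde B$ with $-K_Y \cdot B = -K_X \cdot \tilde B + \ell\,(E\cdot \tilde B) \ge \iota_X + \ell \ge n$. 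I would make this precise by playing a fiber $F \cong \P^{\ell}$ of $\varphi_R$ against a fiber $F'$ of $\varphi_{R'}$ meeting $E$: Lemma~\ref{lem:Serre} gives $\dim F' \le n - \ell$, the Ionescu–Wi\'sniewski inequality (Proposition~\ref{prop:Ion:Wis}) bounds $\ell(R')$, and the $\text{Locus}(\cM_x)$ estimate (Proposition~\ref{prop:Ion:Wis:2}) together with the numerical bookkeeping of Lemma~\ref{lem:LocMY} controls how a dominating family pushes forward to $Y$. The aim is to exhibit a locally unsplit dominating family on $Y$ of degree $n$ or $n+1$, so that the Cho–Miyaoka–Shepherd-Barron/Dedieu–H\"oring dichotomy (Theorem~\ref{them:fano:min:fam}) identifies $Y$, the borderline outputs being the product and blow-up bases of (iv) and (v).

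With $Y$ in hand I would determine $T$ from the Fano condition. On $E = \P(N^{\vee}_{T/Y})$ one has $\cO_E(E) = \cO_E(-1)$, so ampleness of $-K_X|_E = \varphi_R^{\ast}(-K_Y)|_T + \ell\,\cO_E(1)$ forces the normal bundle $N_{T/Y}$ to be very positive, and the inequality $\iota_X \ge \dim T + 1$ forces $T$ to be as uniform as possible. Translating these constraints into the projective geometry of the embedding $T \hookrightarrow Y$ should show that $T$ is a linear subspace of $\P^n$ or of $Q^n$, or a smooth quadric in $Q^n$ not contained in a linear subspace — yielding (i)–(iii) — while the disconnected-center configuration over $\P^n$ gives (iv) and the product base gives (v).

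The main obstacle is twofold. The harder conceptual step is the rigorous identification of $Y$: the minimal dominating family of $X$ need not meet $E$, so one cannot naively transfer a large degree to $Y$, and the delicate case is the boundary situation $\iota_X = n - \ell$ (forcing $\ell(R') = \dim F'$ rather than $\dim F' + 1$), where Theorem~\ref{them:HN13} does not apply directly and the covering structure on $Y$ must be extracted by a finer analysis of how $R$- and $R'$-curves span $\overline{NE}(X)$. The second difficulty is the projective-geometric step of the previous paragraph — converting numerical positivity of $-K_X$ and the pseudoindex bound into the statement that $T$ is \emph{linear} (respectively a quadric) of \emph{exactly} the listed dimension — together with the bookkeeping needed to isolate and correctly describe the degenerate outputs (iv) and (v).
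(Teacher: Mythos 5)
First, a structural point: the paper does not prove this statement at all --- it is quoted as Theorem~1.3 of Andreatta--Occhetta \cite{AO05} and used as a black box --- so your attempt must be judged as a self-contained proof, and as such it is an outline rather than a proof. What you do establish is correct: for the blow-up of a smooth $Y$ along a smooth $T$ of codimension $c$ one has $\ell(R)=c-1=n-\dim T-1$ (computed by lines in the $\P^{c-1}$-fibers), hence $\iota_X\le\ell(R)$, which combined with the hypothesis $\iota_X\ge n-\ell(R)$ gives $\ell(R)\ge n/2$ and $\dim T\le n/2-1$; and the estimate $-K_Y\cdot B=-K_X\cdot\tilde B+\ell(R)\,(E\cdot\tilde B)\ge\iota_X+\ell(R)\ge n$ for curves $B\not\subset T$ meeting $T$ is also correct. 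But everything that makes the theorem a theorem is deferred rather than done: you never actually construct a dominating, locally unsplit family on $Y$ of degree $\ge n$ --- note that a minimal dominating family on $Y$ generically consists of curves \emph{missing} $T$, so your degree estimate says nothing about it, and pushing forward a minimal family from $X$ only gives degree $\iota_X+\ell(R)\,(E\cdot C)$, which is small when $E\cdot C=0$ --- you never prove that $Y$ is Fano, and the entire identification of $T$ as a linear space or a quadric of exactly the stated dimension, together with the degree bound $\le n$ in case (iv), is labelled an ``obstacle'' instead of being resolved. Those items are not finishing touches; they are the substance of Andreatta--Occhetta's paper-length argument.

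Second, the identification mechanism you rely on is provably insufficient for case (v). Theorem~\ref{them:fano:min:fam} can only output $\P^n$ (degree $n+1$), or $Q^n$ or a blow-up of $\P^n$ along a codimension-two subvariety of a hyperplane (degree $n$); the base in case (v) is $Y=\P^1\times\P^{n-1}$, which is none of these, so the dichotomy can never produce it, contrary to your claim that the ``borderline outputs'' of that dichotomy are the bases of (iv) and (v). Worse, $\P^1\times\P^{n-1}$ \emph{does} carry an unsplit dominating family of anticanonical degree $n$ (the lines in the $\P^{n-1}$-fibers), so if your construction yielded such a family on $Y$, a literal application of Theorem~\ref{them:fano:min:fam} as stated would ``prove'' something false; this shows that the degree-$n$ case of that theorem must be applied only to minimal dominating families, and that the product base of case (v) requires a genuinely separate branch of argument which your outline does not contain. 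So the skeleton (numerology, then identify $Y$, then identify $T$) is reasonable, but each load-bearing step --- Fano-ness of $Y$, construction of the identifying family, the case (v) branch, and the classification of the centers --- is missing or incompatible with the tools you invoke.
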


\begin{proposition}\label{prop:bundle} Let $X$ be an $n$-dimensional smooth Fano variety admitting a $\P^{\iota_X-1}$-bundle structure $\psi: X\to \P^{\iota_X+1}$, and assume that $\iota_X=\frac{n}{2}\geq 2$. Then $X$ is isomorphic to one of the following:
\begin{enumerate} 
\item $\P^{\iota_X-1}\times \P^{\iota_X+1}$;
\item $\P(\cO_{\P^{\iota_X+1}}(1)\oplus\cO_{\P^{\iota_X+1}}^{\oplus \iota_X-1})$;
\item $\P(\cO_{\P^{\iota_X+1}}(1)^{\oplus 2}\oplus\cO_{\P^{\iota_X+1}}^{\oplus \iota_X-2})$;
\item $\P(\cO_{\P^{\iota_X+1}}(2)\oplus\cO_{\P^{\iota_X+1}}^{\oplus \iota_X-1}).$
\end{enumerate}
\end{proposition}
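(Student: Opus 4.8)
The plan is to reduce the statement to a classification of the rank-$(\iota_X-1)$ bundle $\cE$ with $X\cong\P(\cE)$, and to pin down $\cE$ by means of the second extremal ray. Write $r:=\iota_X-1$, so that $\dim X=2r+2$ and $\psi:X\to\P^{r+2}$ is a $\P^{r}$-bundle; consequently $\rho_X=2$ and $\overline{NE}(X)$ has exactly two extremal rays, the ray $R_1$ generated by a line in a fiber of $\psi$ (with $\ell(R_1)=r+1$) and a second ray $R_2$ with contraction $\varphi:X\to Y$. Since $\iota_X=r+1$ is the minimum of $-K_X\cdot C$ over all rational curves, every $R_2$-curve has anticanonical degree at least $r+1$, so $\ell(R_2)\geq r+1$. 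I normalize $\cE$ (twisting by a line bundle, which does not change $X$) so that $\cE$ is nef but $\cE\otimes\cO_{\P^{r+2}}(-1)$ is not nef; then $\xi:=\cO_{\P(\cE)}(1)$ is nef, supports $R_2$ (it has positive degree on fiber lines, hence does not support $R_1$), and the relative canonical bundle formula gives $-K_X=(r+1)\xi+(r+3-d)\psi^{\ast}H$, where $H$ is the hyperplane class and $d:=\deg(\cE|_L)\geq 0$ for a line $L$.

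Next I would bound $d$. Because $\cE$ is nef while $\cE(-1)$ is not, there is a line $L\subset\P^{r+2}$ carrying a quotient $\cE|_L\twoheadrightarrow\cO_L$; the corresponding section $s_0$ of $\psi$ over $L$ satisfies $\xi\cdot s_0=0$ and $\psi^{\ast}H\cdot s_0=1$, hence $[s_0]\in R_2$ and $-K_X\cdot s_0=r+3-d$. Thus $r+1\leq\ell(R_2)\leq r+3-d$, forcing $d\in\{0,1,2\}$. It then remains to show that the normalized nef bundle $\cE$ splits as a direct sum of line bundles, since its splitting type, determined by $d$ together with nefness, yields exactly $\cO^{\oplus(r+1)}$ (case (i)), $\cO(1)\oplus\cO^{\oplus r}$ (case (ii)), $\cO(1)^{\oplus 2}\oplus\cO^{\oplus(r-1)}$ (case (iii)), or $\cO(2)\oplus\cO^{\oplus r}$ (case (iv)). For $d=0$ every line has splitting type $(0,\dots,0)$, so $\cE$ is numerically flat and hence trivial on the simply connected base $\P^{r+2}$; for $d=1$ the splitting type is the constant $(1,0,\dots,0)$, so $\cE$ is uniform, and since $\rk\cE=r+1=(r+2)-1$ lies strictly below the dimension of the base, the classification of low-rank uniform bundles on projective space shows $\cE$ splits.

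The hard part will be the case $d=2$, where two splitting types, $(1,1,0,\dots,0)$ and $(2,0,\dots,0)$, are a priori possible and might jump from line to line; the decisive point is to prove that $\cE$ is nonetheless \emph{uniform}, after which the low-rank uniform splitting theorem again applies and separates cases (iii) and (iv). I expect to establish uniformity by analyzing the second contraction $\varphi$ directly: combining the bound $\dim F\leq r+2$ for a fiber $F$ of $\varphi$ coming from Lemma~\ref{lem:Serre} applied to the two distinct contractions $\psi$ and $\varphi$, with the Ionescu--Wi\'sniewski inequality (Proposition~\ref{prop:Ion:Wis}) and $\ell(R_2)\geq r+1$, so as to control both the fiber dimension and the type (fiber-type, divisorial, or small) of $\varphi$, and then invoking Theorem~\ref{them:HN13} and Theorem~\ref{them:AO02} to recognize $X$. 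The genuinely delicate subcases are the small contraction (producing (iii)) and the non-blow-up divisorial contraction onto a cone (producing (iv)), for which equidimensionality of $\varphi$ — equivalently, constancy of the splitting type of $\cE$ — must be argued by hand. Finally, the smallest case $r=1$ (that is, $n=4$), which is not covered by the uniform-bundle input, is instead disposed of by Lemma~\ref{lem:n=4}; and the reverse direction, that each of the four bundles is Fano with $\iota_X=r+1$, is already recorded in the examples of Section~2, so only the forward classification need be carried out here.
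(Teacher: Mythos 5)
Your skeleton is the same as the paper's: realize $X=\P(\cE)$, constrain the splitting type of $\cE$ on lines by testing the pseudoindex against distinguished sections, and convert uniformity into splitting via Sato's theorem (the rank $\iota_X$ is smaller than $\dim\P^{\iota_X+1}$). But two of your steps do not hold as written. The lesser problem is the normalization. From ``$\cE$ nef and $\cE(-1)$ not nef'' you cannot conclude that some \emph{line} $L$ carries a quotient $\cE|_L\twoheadrightarrow\cO_L$: the failure of nefness of $\cE(-1)$ is witnessed by a negative quotient over some curve, not necessarily a line, so this step tacitly assumes that nefness of such bundles is detected by restriction to lines --- which is precisely what would need proof. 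Without that line there is no curve $s_0$, hence no bound $d\le 2$. The repair is to normalize on lines instead, as the paper does: twist so that every line has all splitting summands $\ge 0$ while some line $\ell_0$ has a summand equal to $0$ (possible because the splitting types on lines form a bounded family); running your section computation on $\ell_0$ then yields $d\in\{0,1,2\}$, and your $d=0$ and $d=1$ cases go through.

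The serious gap is the case $d=2$, and it is not a defect of exposition: what you defer (``equidimensionality of $\varphi$ --- equivalently, constancy of the splitting type of $\cE$ --- must be argued by hand'') is the entire mathematical content of the proposition in that case. None of the tools you cite excludes a non-uniform $\cE$ whose generic splitting type is $(1,1,0,\dots,0)$ and which jumps to $(2,0,\dots,0)$ on special lines: both types have degree $2$, the minimal sections over both kinds of lines have anticanonical degree exactly $\iota_X$, and semicontinuity permits exactly this degeneration. The danger is real: for $r=1$ the null correlation bundle $N$ on $\P^3$ (generic type of $N(1)$ equal to $(1,1)$, jumping type $(2,0)$) gives $\P(N)$ Fano with $-K_{\P(N)}=2\bigl(\xi_{N(1)}+\psi^{\ast}H\bigr)$, hence pseudoindex $2=n/2$; it satisfies every hypothesis of Proposition~\ref{prop:bundle} and is not in the list, both of its elementary contractions being of fiber type. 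In particular your plan to dispose of $r=1$ via Lemma~\ref{lem:n=4} cannot work: that lemma assumes an elementary \emph{birational} contraction, which is neither a hypothesis of the proposition nor available for $\P(N)$; the case $r=1$ is genuinely outside the reach of this kind of argument (the paper itself only ever applies the proposition for $\iota_X\ge 3$). Finally, be aware that the paper's own proof settles $d=2$ by asserting that constancy of $\deg\cE|_\ell$ forces uniformity; since the two competing types above have equal degree, that observation would not have closed your gap either. Excluding the jumping for $\iota_X\ge 3$ requires input beyond degree counting and fiber-dimension estimates --- for instance the Fano-bundle classification methods of Peternell--Szurek--Wi\'sniewski, which the paper invokes for the quadric base in Proposition~\ref{prop:bundle:Q}.
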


\begin{proof} Since the Brauer group of $\P^{\iota_X+1}$ vanishes, there exists a rank $\iota_X$ vector bundle $\cE$ over $\P^{\iota_X+1}$ such that $\psi: X \to \P^{\iota_X+1}$ is given by the natural projection $\P(\cE) \to \P^{\iota_X+1}$. Let $\ell\subset \P^{\iota_X+1}$ be a line. By tensoring a line bundle if necessary, we may assume that 
$$\cE|_{\ell}\cong \cO_{\P^1}(-a_1)\oplus \cO_{\P^1}(-a_2)\oplus \cdots\oplus  \cO_{\P^1}(-a_{\iota_X})
$$
with $0=a_1\leq  a_2\leq \cdots\leq a_{\iota_X}$.

Set $X_{\ell}:=\ell\times_{\P^{\iota_X+1}}X$ and $\tilde{\ell}:=\P(\cO_{\P^1}(-a_{\iota_X}))$. Remark that $\tilde{\ell}$ is a section of $X_{\ell}\to \ell$. Then we have 
\begin{align} 
\iota_X &\leq -K_X\cdot \tilde{\ell}= \psi^{\ast}(-K_{\P^{\iota_X+1}})\cdot\tilde{\ell}-K_{X/\P^{\iota_X+1}}\cdot\tilde{\ell}\nonumber\\
&= \left(\iota_X+2\right)+\left(-K_{X_{\ell}/\ell}\cdot \tilde{\ell}\right)= \left(\iota_X+2\right)-\iota_Xa_{\iota_X}+\sum_{i=1}^{\iota_X}a_i. \nonumber
\end{align}
This implies that 
$$
0\leq (a_{\iota_X}-a_1)+(a_{\iota_X}-a_2)+\cdots +(a_{\iota_X}-a_{\iota_X})\leq 2.
$$
Therefore, $\cE|_{\ell}$ is isomorphic to 
$$
\cO_{\P^1}^{\oplus \iota_X}, \quad \cO_{\P^1}\oplus\cO_{\P^1}(-1)^{\oplus \iota_X-1},  \quad \cO_{\P^1}^{\oplus2
}\oplus \cO_{\P^1}(-1)^{\oplus \iota_X-2}\quad \mbox{or} \quad \cO_{\P^1}\oplus \cO_{\P^1}(-2)^{\oplus \iota_X-1}.
$$
Since $\deg \cE|_{\ell}$ is independent of the choice of a line $\ell \subset \P^{\iota_X+1}$, we see that $\cE$ is a uniform vector bundle.
Applying \cite[Main Theorem]{Sato76}, $\cE$ is isomorphic to one of the following:
\begin{itemize}
\item $\cO_{\P^{\iota_X+1}}^{\oplus \iota_X}$;
\item $\cO_{\P^{\iota_X+1}}\oplus\cO_{\P^{\iota_X+1}}(-1)^{\oplus \iota_X-1}$;
\item $\cO_{\P^{\iota_X+1}}^{\oplus2}\oplus \cO_{\P^{\iota_X+1}}(-1)^{\oplus \iota_X-2}$;
\item $\cO_{\P^{\iota_X+1}}\oplus \cO_{\P^{\iota_X+1}}(-2)^{\oplus \iota_X-1}$.
\end{itemize}

\end{proof}

\begin{proposition}\label{prop:bundle:Q} Let $X$ be an $n$-dimensional smooth Fano variety. Assume that $X$ admits a $\P^{\iota_X-1}$-bundle structure $\psi: X\to Q^{\iota_X+1}$ and $\iota_X=\frac{n}{2}\geq 2$. Then $X$ is isomorphic to one of the following:
\begin{enumerate} 
\item $\P^{\iota_X-1}\times Q^{\iota_X+1}$;
\item $\P(\cO_{Q^{\iota_X+1}}(1)\oplus\cO_{Q^{\iota_X+1}}^{\oplus \iota_X-1})$;
\item $\P(\cS(1)\oplus \cO_{Q^4})$, where $\cS$ is the spinor bundle on $Q^4$ \cite{Ott88}.
\end{enumerate}
\end{proposition}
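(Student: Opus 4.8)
The plan is to follow the proof of Proposition~\ref{prop:bundle} almost verbatim, keeping track of the single unit of anticanonical degree that is lost when the base $\P^{\iota_X+1}$ is replaced by the quadric $Q^{\iota_X+1}$. Since $\dim Q^{\iota_X+1}=\iota_X+1\geq 3$, the cohomological Brauer group of $Q^{\iota_X+1}$ vanishes, so $\psi$ is the projectivization $\P(\cE)$ of a rank $\iota_X$ vector bundle $\cE$ on $Q^{\iota_X+1}$. Fix a line $\ell\subset Q^{\iota_X+1}$ and, after tensoring by a line bundle, write $\cE|_{\ell}\iso\bigoplus_{i=1}^{\iota_X}\cO_{\P^1}(-a_i)$ with $0=a_1\leq\cdots\leq a_{\iota_X}$. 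Setting $X_{\ell}:=\ell\times_{Q^{\iota_X+1}}X$ and $\til\ell:=\P(\cO_{\P^1}(-a_{\iota_X}))$ exactly as there, the only change is that a line on the quadric satisfies $-K_{Q^{\iota_X+1}}\cdot\ell=\iota_X+1$ rather than $\iota_X+2$. The section estimate then reads
$$
\iota_X\leq -K_X\cdot\til\ell=(\iota_X+1)-\iota_Xa_{\iota_X}+\sum_{i=1}^{\iota_X}a_i,
$$
so that $\sum_{i=1}^{\iota_X}(a_{\iota_X}-a_i)\leq 1$. Hence on every line the splitting type of $\cE$ is either $\cO_{\P^1}^{\oplus\iota_X}$ or $\cO_{\P^1}\oplus\cO_{\P^1}(-1)^{\oplus\iota_X-1}$; in particular $\deg\cE|_{\ell}$ is independent of $\ell$, and $\cE$ is a uniform vector bundle.

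It then remains to classify the uniform bundles on $Q^{\iota_X+1}$ carrying one of these two splitting types. In the first case $\cE$ is trivial on every line, hence $\cE\iso\cO^{\oplus\iota_X}$ up to twist and $X\iso\P^{\iota_X-1}\times Q^{\iota_X+1}$, which is (i). In the second case the degree-zero summand is the maximal destabilizing sub-line-bundle of $\cE|_{\ell}$; forming the relative Harder--Narasimhan filtration over the universal family of lines produces a line sub-bundle of the pullback of $\cE$ whose quotient has fibrewise type $\cO_{\P^1}(-1)^{\oplus\iota_X-1}$. Whether this filtration descends to $Q^{\iota_X+1}$ is the heart of the matter. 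If it does, then since $\Pic Q^{\iota_X+1}=\Z$ the sub-bundle is $\cO$, the quotient is $\cO(-1)^{\oplus\iota_X-1}$, and the vanishing $\Ext^1(\cO(-1)^{\oplus\iota_X-1},\cO)\iso H^1(Q^{\iota_X+1},\cO(1))^{\oplus\iota_X-1}=0$ splits the extension, giving $\cE\iso\cO(1)\oplus\cO^{\oplus\iota_X-1}$ up to twist and hence case (ii).

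The main obstacle, and the origin of case (iii), is precisely the failure of this descent: the sub-lines of the fibres $\cE_x$ cut out by the lines through $x$ need not be independent of $x$, and this non-constancy is encoded by a spinor bundle. I would invoke the classification of low-rank uniform bundles on quadrics, together with Ottaviani's study of the spinor bundles \cite{Ott88}, to conclude that the only indecomposable uniform summand compatible with the above splitting type is of spinor type. Because a spinor bundle restricts to a line with balanced splitting, the requirement that the degree-zero part of $\cE|_{\ell}$ have multiplicity one forces this summand to be a rank-two spinor bundle $\cS$ with $\cS|_{\ell}\iso\cO_{\P^1}\oplus\cO_{\P^1}(-1)$; matching the total rank $\iota_X$ then localizes the base to $Q^4$ and yields $\cE\iso\cS(1)\oplus\cO_{Q^4}$ up to twist, which is (iii). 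I expect the technical heart of the argument to lie in this uniform-bundle classification on quadrics --- in particular, in verifying that no further indecomposable bundle realizes the splitting type $\cO_{\P^1}\oplus\cO_{\P^1}(-1)^{\oplus\iota_X-1}$, and that the spinor alternative is confined to the quadric fourfold.
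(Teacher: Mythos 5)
Your first half reproduces the paper's argument for Proposition~\ref{prop:bundle} essentially verbatim and is correct: Brauer--group vanishing gives $X\iso\P(\cE)$; the section estimate over a line, with $-K_{Q^{\iota_X+1}}\cdot\ell=\iota_X+1$ in place of $\iota_X+2$, gives $\sum_{i}(a_{\iota_X}-a_i)\leq 1$; hence the splitting type is $\cO_{\P^1}^{\oplus\iota_X}$ or $\cO_{\P^1}\oplus\cO_{\P^1}(-1)^{\oplus\iota_X-1}$ and $\cE$ is uniform, the trivial type giving case (i) (the paper cites \cite[Proposition~1.2]{AW01} here). From that point on you and the paper part ways. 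The paper does not classify uniform bundles at all: it twists to $\cE'=\cE(2)$, observes that $\det\cE'\iso\cO_{Q^{\iota_X+1}}(\iota_X+1)=-K_{Q^{\iota_X+1}}$ (equivalently, that $X$ then has Fano index $\iota_X$), and quotes the classification of such Fano bundles over quadrics, namely \cite{PSW92} together with \cite{Occ05} (the latter supplying precisely the spinor case on $Q^4$), to land on (ii) or (iii). Your alternative route --- Harder--Narasimhan descent giving (ii), with failure of descent to be handled by a classification of uniform bundles on quadrics --- is plausible in outline, and the descent branch is fine modulo routine details; but the non-descent branch is a promissory note rather than a proof: you never identify which uniform-bundle theorem you are invoking, and in the rank you actually need, $\rank\cE=\iota_X=\dim Q^{\iota_X+1}-1$, that classification is exactly the technical content at stake.

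Moreover, one assertion in that branch is false, not merely unproved: that matching ranks ``localizes the base to $Q^4$''. Take $\iota_X=2$. The spinor bundle $\cS$ on $Q^3$ has rank $2=\iota_X$, is uniform of splitting type $\cO_{\P^1}\oplus\cO_{\P^1}(-1)$, and $\P(\cS)$ is the complete flag variety $\Sp(4)/B$: it is Fano, $-K_{\P(\cS)}$ is twice a primitive ample class, both elementary contractions are $\P^1$-bundles (over $Q^3$ and over $\P^3$), and its pseudoindex is $2=n/2$. It survives every constraint you have imposed --- uniformity, splitting type, total rank, Fanoness --- and indeed satisfies every hypothesis of the proposition, yet it is none of (i)--(iii). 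So your argument, carried out honestly, cannot terminate in the stated list; nothing in your proposal excludes this bundle, and it cannot be excluded, since it genuinely occurs. (This also shows that the statement is delicate at the boundary $\iota_X=2$; the paper only ever invokes Proposition~\ref{prop:bundle:Q} for $\iota_X\geq 3$, the fourfold case of the main theorem being settled separately by Lemma~\ref{lem:n=4}, and in that range the classifications it cites do return exactly (ii) and (iii).) A completed version of your plan must therefore either restrict to $\iota_X\geq 3$ and actually prove the uniform-bundle classification in rank $\iota_X$ on $Q^{\iota_X+1}$, or follow the paper in reducing to the Fano-bundle classification via $\det\cE'=-K_{Q^{\iota_X+1}}$.
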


\begin{proof} By the same argument as in Proposition~\ref{prop:bundle}, there exists a vector bundle $\cE$ over $Q^{\iota_X+1}$ such that 
\begin{itemize}
\item $\psi: X \to Q^{\iota_X+1}$ is given by the natural projection $\P(\cE) \to Q^{\iota_X+1}$;
\item For any line $\ell \subset Q^{\iota_X+1}$, $\cE|_{\ell}$ is isomorphic to $\cO_{\P^1}^{\oplus \iota_X}$ or $\cO_{\P^1}\oplus\cO_{\P^1}(-1)^{\oplus \iota_X-1}$.
\item $\cE$ is a uniform vector bundle.
\end{itemize}
If $\cE|_{\ell}$ is isomorphic to $\cO_{\P^1}^{\oplus \iota_X}$ for any line $\ell \subset Q^{\iota_X+1}$, \cite[Proposition~1.2]{AW01} implies $\cE\cong \cO_{Q^{\iota_X+1}}^{\oplus \iota_X}$; thus $X$ is as in (i). 
So, we assume that $\cE|_{\ell}$ is isomorphic to $\cO_{\P^1}\oplus\cO_{\P^1}(-1)^{\oplus \iota_X-1}$ for any line $\ell \subset Q^{\iota_X+1}$. Then we have $\det \cE\cong \cO_{Q^{\iota_X+1}}(1-\iota_X)$; this implies that $-K_X=\iota_X\xi +n\psi^{\ast}H$, where $\xi$ is the tautological divisor of $X=\P(\cE)$ and $H$ is a hyperplane of $Q^{\iota_X+1}$. Set $L:=\xi +2\psi^{\ast}H$ and $\cE':=\psi_{\ast}\cO(L)$. {Then $\cE'=\cE(2)$ and this implies that the anticanonical divisor of ${Q^{\iota_X+1}}$ is equal to $\det \cE'$.} Applying \cite{PSW92} and \cite{Occ05}, we see that $X$ is isomorphic to the variety as in (ii) or (iii).
\end{proof}

\begin{remark}\label{rem:spinor} The vareity $\P(\cS(1)\oplus \cO_{Q^4})$ as in Proposition~\ref{prop:bundle:Q} has a scroll structure with a $4$-dimensional fiber (see \cite[Remarks~4.2]{Wis94}). In particular, it does not admit a birational contraction.
\end{remark}

\begin{proposition}\label{prop:bundle:iotaZ} Let $X$ be an $n$-dimensional smooth Fano variety. Assume that {there exists a vector bundle $\cE$ over a variety $Z$ such that  $X$ is isomorphic to $\P(\cE)$.} Moreover, we assume that $\iota_X=\iota_Z=\frac{n}{2}\geq 2$ and $\rho_X=2$. Then $X$ is isomorphic to $\P^{\iota_X-1}\times Z$.
\end{proposition}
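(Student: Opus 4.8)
The plan is to exploit the second elementary contraction of $X$ and to show that, together with $\psi$, it exhibits $X$ as a product with factor $Z$. Write $\psi\colon X=\P(\cE)\to Z$ for the given $\P^{r}$-bundle, $r=\rk\cE-1$. Since a projective bundle has $\rho_X=\rho_Z+1$, the hypothesis $\rho_X=2$ gives $\rho_Z=1$, so ${\rm Pic}\,Z=\Z H$ with $H$ ample. A line $\ell$ in a fibre spans an extremal ray $R_1$ of $\overline{NE}(X)$, with $-K_X\cdot\ell=r+1\ge\iota_X$ and $\psi$ its contraction; let $R_2$ be the other extremal ray and $\varphi\colon X\to W$ its contraction.

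First I would build a horizontal family of minimal curves. Choose a minimal family $\cM_Z$ on $Z$ with $\deg_{-K_Z}\cM_Z=\iota_Z=\iota_X$. By Lemma~\ref{lem:Fano:bundle}(ii), for a general $\cM_Z$-curve $C$ one has $\P^1\times_Z X\cong\P^1\times\P^{r}$; hence the sections $\P^1\times\{\mathrm{pt}\}$ are rational curves $\widetilde C\subset X$ with $-K_X\cdot\widetilde C=\iota_X$ and $\psi_\ast[\widetilde C]=[C]$. Their deformations form a dominating family $\cM$ of minimal anticanonical degree, which is unsplit because any degeneration would contain a component of degree $<\iota_X$. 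As $\psi^\ast H\cdot\widetilde C=H\cdot C$ realises the least positive $H$-degree on $Z$ and $\rho_Z=1$, the class $[\widetilde C]$ must lie on $R_2$; thus $\varphi$ contracts the $\cM$-curves.

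Next I would analyse the fibres of $\varphi$. Let $F$ be a fibre of $\varphi$. Every curve in $F$ has class in $R_2$, hence is not $\psi$-contracted, so $\psi|_F$ is finite and $\dim F\le\dim Z$. Conversely $Z$ is rationally chain connected by $\cM_Z$ (a dominating family on a Fano variety of Picard number one), and over each general $\cM_Z$-curve the product structure $\P^1\times\P^{r}$ allows one to lift such chains horizontally into $F$; therefore $\psi(F)=Z$ and $\dim F=\dim Z$, so $\psi|_F\colon F\to Z$ is finite and surjective. In particular $\dim W=n-\dim Z=r$, and $\varphi$ restricted to a fibre $\P^{r}$ of $\psi$ is finite onto $W$. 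Now consider $(\psi,\varphi)\colon X\to Z\times W$: its fibres are the sets $F_\psi\cap F_\varphi$, which by Lemma~\ref{lem:Serre} are finite (the rays are distinct, so no curve lies in both) and nonempty, so $(\psi,\varphi)$ is a finite morphism onto the $n$-dimensional smooth variety $Z\times W$. Restricting over a general $\cM_Z$-curve shows its degree equals $\deg(\psi|_F)$, and $\psi|_F$ is an isomorphism on every horizontal $\cM$-curve contained in $F$, hence unramified and so étale; as the Fano variety $Z$ is simply connected, $\psi|_F$ has degree one. Thus $(\psi,\varphi)$ is finite and birational onto a smooth variety, so an isomorphism by Zariski's main theorem, and $W\cong\P^{r}$ as a fibre of the projection $Z\times W\to Z$. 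With $r=\iota_X-1$ this yields $X\cong\P^{\iota_X-1}\times Z$.

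The hard part will be the degree-one step, i.e. showing that the finite map $\psi|_F\colon F\to Z$ (equivalently $(\psi,\varphi)$) is not a nontrivial multiple cover. The clean route is via simple connectedness of $Z$ together with the local product structure over minimal curves, but establishing unramifiedness in all tangent directions---not merely along the horizontal curves---requires some care (for instance through normality of $F$ or a purity argument), and this is where I expect the main technical effort to lie; determining that the fibre dimension is exactly $\iota_X-1$ (equivalently $\dim Z=\iota_X+1$) rests on the same product description.
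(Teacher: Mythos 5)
Your strategy diverges fundamentally from the paper's: the paper never touches the second extremal contraction at all. After establishing (as you also need to) that a minimal dominating family $\cN$ on $Z$ has degree exactly $\iota_Z$, is unsplit, and that Lemma~\ref{lem:Fano:bundle}(ii) gives $\P^1\times_Z X\cong\P^1\times\P^{\iota_X-1}$ over every $\cN$-curve, the paper simply observes that this means $\cE|_\ell\cong\cO_{\P^1}(a)^{\oplus\iota_X}$ for every $[\ell]\in\cN$, i.e.\ that $\cE$ is uniform with totally split, balanced restriction on an unsplit covering family, and then quotes \cite[Proposition~1.2]{AW01} to conclude $X\cong\P^{\iota_X-1}\times Z$ directly. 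Your plan replaces that citation by a hands-on construction of the product via the morphism $(\psi,\varphi)$ to $Z\times W$; this in effect amounts to reproving the Ancona--Wi\'sniewski-type triviality result, and the two places where you yourself flag difficulty are exactly where such a proof is hard. They are genuine gaps, not routine verifications.

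Concretely: (1) your justification that $[\widetilde C]$ lies on $R_2$ --- that $\psi^*H\cdot\widetilde C$ realises the least positive $H$-degree and $\rho_Z=1$ --- does not work, because minimality of degree does not imply extremality. Writing $[\widetilde C]=\alpha[\ell]+\beta[C']$ with $[C']$ a generator of $R_2$, the degree comparison only yields $\beta\le 1$, and the pseudoindex inequality only yields $\alpha+\beta\le 1$; nothing forces $\alpha=0$, so a priori $[\widetilde C]$ lies in the interior of $\overline{NE}(X)$ and $\varphi$ does not contract the $\cM$-curves. Everything downstream (fibres of $\varphi$ dominating $Z$ via lifted chains, equidimensionality, $\dim W=r$, finiteness of $(\psi,\varphi)$) depends on this. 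Note that the technique the paper uses for precisely this kind of claim in Theorem~\ref{them:main1} --- Lemma~\ref{lem:LocMY} applied after showing $\mathrm{Locus}(\cM)_F=X$ --- is not available here either, since for a fibre $F_\psi\cong\P^{\iota_X-1}$ of $\psi$ Proposition~\ref{prop:LocMY:dim} only gives $\dim\mathrm{Locus}(\cM)_{F_\psi}\ge n-2$. (2) The degree-one step: finiteness of $(\psi,\varphi)$ together with ``isomorphism along the horizontal curves'' does not give unramifiedness, since one needs injectivity on tangent spaces in all directions and the fibre $F$ of $\varphi$ is not even known to be normal; you acknowledge this, but without it the \'etale/simply-connected argument, and hence the conclusion, is unproven (smoothness of $W$, which you invoke for the target $Z\times W$, is likewise not established --- only normality is automatic). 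A smaller but real point: you assert $\deg_{(-K_Z)}\cM_Z=\iota_Z$ without argument; this needs Theorem~\ref{them:fano:min:fam} together with $\iota_Z=\iota_X$ to exclude $Z\cong\P^{\dim Z}$ and $Q^{\dim Z}$, as in the paper, and it is essential because Lemma~\ref{lem:Fano:bundle}(ii) applies only to curves of degree exactly $\iota_Z$.
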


\begin{proof} Let us take a minimal dominating family $\cN\subset \text{RatCurves}^n(Z)$. By Theorem~\ref{them:fano:min:fam} and our assumption, we obtain 
$$
\iota_Z\leq \deg_{(-K_Z)}\cN<\dim Z=\iota_X+1=\iota_Z+1.
$$ 
Thus we have $\deg_{(-K_Z)}\cN=\iota_Z$; {this implies that $\cN$ is an unsplit covering family. In fact, if ${\mathcal M}$ is not unsplit, we obtain a rational $1$-cycle $\sum_{i=1}^s a_iZ_i$ as in Lemma~\ref{lem:degeneration:curves}. Then we have $\deg_{(-K_X)}{\mathcal M}=\sum_{i=1}^s a_i(-K_X)\cdot Z_i\geq 2\iota_X=n$. This is a contradiction.} 
For $[\ell] \in \cN$, let $f:\P^1\to \ell \subset Z$ be the normalization of $\ell$; Lemma~\ref{lem:Fano:bundle} (ii) implies that $\P(f^{\ast}\cE)=\P^1\times_ZX$ is isomorphic to $\P^1\times \P^{\iota_X-1}$. This means that $\cE|_{\ell} \cong \cO_{\P^1}(a)^{\oplus \iota_X}$ for some integer $a$. Since $\deg \cE|_{\ell}$ is independent of the choice of $[\ell] \in \cN$, we see that $\cE$ is a uniform vector bundle with respect to $\cN$. Thus \cite[Proposition~1.2]{AW01} concludes that $X$ is isomorphic to $\P^{\iota_X-1}\times Z$.
\end{proof}

\section{Proof of the main theorem} 

\subsection{The case where $X$ admits an elementary small contraction} 
The purpose of this subsection is to prove the following:

\begin{theorem}\label{them:main1} Let $X$ be a smooth Fano variety with $\iota_X=\frac{n}{2}\geq 3$ and $\rho_X=2$. Assume that $X$ admits a small contraction $\varphi: X\to Y$ of an extremal ray $R\subset \overline{NE}(X)$. Then $X$ is isomorphic to $\P(\cO_{\P^{\iota_X+1}}^{\oplus 2}(1)\oplus\cO_{\P^{\iota_X+1}}^{\oplus \iota_X-2})$.
\end{theorem}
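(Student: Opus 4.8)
The plan is to use the hypothesis $\rho_X=2$ to produce a second elementary contraction and to show it is a projective bundle, so that the classification reduces to Propositions~\ref{prop:bundle} and \ref{prop:bundle:Q}. Let $R'$ be the extremal ray of $\overline{NE}(X)$ other than $R$, and let $\psi\colon X\to Z$ be its contraction. Since $\varphi$ is small, $\dim\Exc(\varphi)\le n-2$, so Proposition~\ref{prop:Ion:Wis} applied to any irreducible component $F$ of a nontrivial fibre of $\varphi$ gives $\dim F\ge n+\ell(R)-1-\dim\Exc(\varphi)\ge \ell(R)+1\ge \iota_X+1$, using $\ell(R)\ge\iota_X$. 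Thus every nontrivial $\varphi$-fibre has dimension at least $\iota_X+1$.

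First I would show that $\psi$ is of fibre type. Suppose instead $\psi$ is birational, with exceptional locus $E':=\Exc(\psi)$ and a component $F'$ of a nontrivial fibre. The same inequality gives $\dim F'\ge\ell(R')\ge\iota_X$. Hence for any $\varphi$-fibre $F$ and any $\psi$-fibre $F'$ one has $\dim F+\dim F'\ge(\iota_X+1)+\iota_X=n+1>n$, so by Lemma~\ref{lem:Serre} they cannot meet; consequently $\Exc(\varphi)\cap E'=\varnothing$. Ruling this configuration out is the crux. When $\psi$ is divisorial it is excluded cleanly: $E'$ is then an irreducible divisor with $E'\cdot R'<0$, while disjointness from $\Exc(\varphi)$ forces $E'\cdot R=0$ (curves with class in $R$ sweep out $\Exc(\varphi)$). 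Writing $A:=\varphi^{\ast}H_Y$ and $B:=\psi^{\ast}H_Z$ for the nef generators dual to $R,R'$, these two conditions give $[E']=\lambda A$ with $\lambda<0$ in $N^1(X)$; but then $A^{n-1}\cdot E'=\lambda\,A^{n}<0$ contradicts $A^{n-1}\cdot E'\ge 0$, since $A$ is nef and big and $E'$ is effective. I expect the remaining possibility, that both $\varphi$ and $\psi$ are small, to be the main obstacle, and I would dispose of it by an analogous numerical analysis on $N^1(X)$, or by the fact that a smooth Fano variety of Picard number two carries at most one small contraction.

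Once $\psi$ is of fibre type, the numerology pins itself down. A $\psi$-fibre through a point of a $\varphi$-fibre $F$ meets $F$, so Lemma~\ref{lem:Serre} gives $\dim F\le n-(\iota_X-1)=\iota_X+1$; combined with the lower bound this forces $\dim F=\iota_X+1$ for every $\varphi$-fibre, and then Proposition~\ref{prop:Ion:Wis} yields $\ell(R)=\iota_X$ and $\dim\Exc(\varphi)=n-2$. Next I would establish equidimensionality of $\psi$. Fix a $\varphi$-fibre $F$ of dimension $\iota_X+1$: any $\psi$-fibre meets $F$ in a finite set, because a positive-dimensional intersection would contain a curve contracted by both $\varphi$ and $\psi$, i.e. with class in $R\cap R'=\{0\}$. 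Hence $\psi|_F\colon F\to Z$ is finite, so $\dim\psi(F)=\iota_X+1$; as a $\psi$-fibre has dimension at least $\ell(R')-1\ge\iota_X-1$ we also get $\dim Z\le\iota_X+1$, whence $\dim Z=\iota_X+1$ and $\psi(F)=Z$. Thus $\psi|_F$ is surjective, every $\psi$-fibre meets $F$ and so has dimension at most $\iota_X-1$, giving equidimensional fibres of dimension exactly $\iota_X-1$ and $\ell(R')=\iota_X$. Theorem~\ref{them:HN13} then shows $\psi$ is a $\P^{\iota_X-1}$-bundle over a smooth variety $Z$ with $\dim Z=\iota_X+1$.

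It remains to identify $Z$ and conclude. By Lemma~\ref{lem:Fano:bundle}, $Z$ is a smooth Fano variety with $\iota_Z\ge\iota_X$; since $\dim Z=\iota_X+1$, either $\iota_Z=\iota_X$ or $\iota_Z\ge\dim Z$. If $\iota_Z=\iota_X$, then Proposition~\ref{prop:bundle:iotaZ} gives $X\cong\P^{\iota_X-1}\times Z$, whose two elementary contractions are the projections and hence are both of fibre type, contradicting the existence of $\varphi$. Therefore $\iota_Z\ge\iota_X+1=\dim Z$, so $Z\cong\P^{\iota_X+1}$ or $Z\cong Q^{\iota_X+1}$, and Proposition~\ref{prop:bundle} (resp. Proposition~\ref{prop:bundle:Q}) applies. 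I would finish by checking which of the resulting varieties admit a small second contraction: the products $\P^{\iota_X-1}\times\P^{\iota_X+1}$ and $\P^{\iota_X-1}\times Q^{\iota_X+1}$ have only fibre-type contractions; the bundles $\P(\cO(1)\oplus\cO^{\oplus\iota_X-1})$, $\P(\cO(2)\oplus\cO^{\oplus\iota_X-1})$ and $\P(\cO_{Q^{\iota_X+1}}(1)\oplus\cO^{\oplus\iota_X-1})$ have a divisorial second contraction, since the locus of minimal sections $\P(\cO^{\oplus\iota_X-1})$ has codimension $1$; and the scroll $\P(\cS(1)\oplus\cO_{Q^4})$ admits no birational contraction by Remark~\ref{rem:spinor}. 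The only remaining case is $X\cong\P(\cO_{\P^{\iota_X+1}}(1)^{\oplus2}\oplus\cO_{\P^{\iota_X+1}}^{\oplus\iota_X-2})$, for which the locus of minimal sections $\P(\cO^{\oplus\iota_X-2})\cong\P^{\iota_X-3}\times\P^{\iota_X+1}$ has codimension $2$ and is contracted onto a subvariety of dimension $\iota_X-3$, so that its second contraction is indeed small. This is the asserted variety.
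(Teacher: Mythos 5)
Your proposal has a genuine gap at what you yourself identify as the crux: ruling out the case where both $\varphi$ and $\psi$ are small. Neither of your two suggested fixes works. The ``analogous numerical analysis'' cannot be run, because a small contraction has no exceptional divisor: the contradiction in your divisorial argument comes from pairing the effective class $[E']$ (forced to be a negative multiple of the nef class $A$) against $A^{n-1}$, and when $\psi$ is small there is no distinguished effective class with $D\cdot R'<0$ and $D\cdot R=0$ to feed into it --- indeed, disjointness of the two exceptional loci is perfectly compatible, numerically, with an effective cone strictly larger than the nef cone on both sides, as happens for non-Fano Mori dream spaces with flipping walls. Your fallback, ``a smooth Fano variety of Picard number two carries at most one small contraction,'' is not a result available in this paper, is cited without reference or proof, and is essentially equivalent to the statement being proved: the paper's own argument is, in substance, a proof of exactly this fact under the hypothesis $\iota_X=\frac{n}{2}$. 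The paper avoids your case analysis entirely by introducing a minimal dominating family $\cM$, showing it is unsplit (via Theorem~\ref{them:fano:min:fam} and Lemma~\ref{lem:degeneration:curves}), deducing ${\rm Locus}(\cM)_F=X$ from Proposition~\ref{prop:LocMY:dim}, and then using Lemma~\ref{lem:LocMY} to force the class of the $\cM$-curves into the second ray $R'$; since $\cM$-curves pass through every point of $X$, the contraction $\psi$ of $R'$ is automatically of fiber type, with no need to exclude birational possibilities case by case.

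There is a second, smaller gap in your equidimensionality step. From surjectivity of $\psi|_F$ you conclude that ``every $\psi$-fibre meets $F$ and so has dimension at most $\iota_X-1$,'' but Lemma~\ref{lem:Serre} only bounds those irreducible components of fibers that actually meet $F$; a jumping fiber could a priori have a large component disjoint from $F$, and nothing in your argument excludes this. The paper closes exactly this loophole with the family $\cM$: through a general point $x$ of such a component $F'_0$ there is an irreducible $\cM$-curve joining $x$ to $F$; being $\psi$-contracted and irreducible, it lies in $F'_0$, so $F'_0$ meets $F$ after all and Lemma~\ref{lem:Serre} applies. Since you never introduce $\cM$, this repair is not available to you. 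The remainder of your proposal is sound and in places nicer than the paper: your numerical exclusion of a divisorial $\psi$ is correct and elementary, and your identification of $Z$ via Lemma~\ref{lem:Fano:bundle} and Proposition~\ref{prop:bundle:iotaZ} (rather than the paper's route through the Stein factorization of the normalized exceptional locus and Lazarsfeld's theorem), followed by the explicit check of which candidates in Propositions~\ref{prop:bundle} and~\ref{prop:bundle:Q} admit a small contraction, is valid and makes explicit a step the paper leaves to the reader. But as it stands the central case --- and with it the theorem --- is not proved.
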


\begin{proof} Let $\cM\subset \text{RatCurves}^n(X)$ be a minimal dominating family of rational curves. Since $\rho_X>1$ and $\iota_X=\frac{n}{2}$, by Theorem~\ref{them:fano:min:fam}, we have $\deg_{(-K_X)}\cM<n$. Applying Lemma~\ref{lem:degeneration:curves}, it follows that $\cM$ is unsplit. By Proposition~\ref{prop:Ion:Wis:2}, for any $x\in X$, the following inequality holds
$$
\dim \text{Locus}(\cM_x) \geq \deg_{(-K_X)}\cM -1\geq \iota_X-1.
$$Let $F$ be an irreducible component of a nontrivial fiber of $\varphi$, and let $E$ be an {irreducible component of} the exceptional locus of $\varphi$ {containing $F$}. By Proposition~\ref{prop:Ion:Wis}, we have an inequality
\begin{eqnarray}\label{ineq2}
\dim F \geq \ell(R)+1\geq \iota_X+1.
\end{eqnarray}
\if0By Lemma~\ref{lem:Serre}, for any point $x\in F$, we have an inequality
$$
\dim (\text{Locus}(\cM_x) \cap F ) \geq \dim \text{Locus}(\cM_x) +\dim F -n\geq \dim F-1-\iota_X. 
$$
Combining this with Lemma~\ref{lem:LocMY} and the inequality (\ref{ineq2}), we see that $\dim F=\iota_X+1$. Then the inequality (\ref{ineq2}) also tells us that $\ell(R)=\iota_X$, and Proposition~\ref{prop:Ion:Wis} implies $\codim_XE=2$.
On the other hand, \fi 
{Applying} Proposition~\ref{prop:LocMY:dim}, we obtain 
$$
n \geq \dim \text{Locus}(\cM)_F \geq \dim F+\deg_{(-K_X)}\cM -1 \geq \left( \iota_X+1 \right)+\iota_X-1=n.
$$ 
This inequality implies that $\dim F = \iota_X + 1$, $X = \mathrm{Locus}(M)_F$, and {$\deg_{(-K_X)}\cM = \iota_X$.} Then Proposition~\ref{prop:Ion:Wis} implies that $\mathrm{codim}\, E = 2$ and $\iota_X = \ell(R)$.
 
Since $\rho_X=2$, there exists an extremal ray $R'\subset \overline{NE}(X)$ which is not $R$. Let $C$ and $C'$ denote an extremal rational curve of $R$ and of $R'$, respectively. We claim that $R'$ contains any $\cM$-curve. {Since any curve contained in $F$ is numerically proportional to $C$, Lemma~\ref{lem:LocMY} implies} there exist a non-negative rational number $a$, a rational number $b$ and an $\cM$-curve $C_{\cM}$ that satisfy the following
$$
[C']= a[C]+b[C_{\cM}]\in N_1(X).
$$
If $b$ is negative, we have $[C'], [C_{\cM}] \in R$ since $[C']-b[C_{\cM}]= a[C]\in R$. This is a contradiction. Hence, we have $b$ is non-negative. Then $a[C]+b[C_{\cM}]=[C']\in R'$; this yields $a[C], b[C_{\cM}]\in R'$. Since $R$ and $R'$ are distinct, we obtain $a=0$. Then we see that $b\neq 0$ and $[C_{\cM}]\in R'$. Since any $\cM$-curve is numerically equivalent to each other, our claim holds.

Let $\psi: X \to Z$ be the contraction of $R'$, which contracts any $\cM$-curve. Since we have a finite morphism $\psi|_F: F\to Z$, we have $\dim Z \geq \dim F=\iota_X+1$. We also have an inequality 
$$
\dim X\geq \dim \text{Locus}(\cM_x) + \dim Z \geq \iota_X-1+\dim Z.
$$
This yields that $\iota_X+1\geq \dim Z$. Thus we see that $\dim Z=\iota_X+1$, and this concludes that $\psi|_F: F\to Z$ is surjective and the dimension of a general fiber of $\psi$ is $\iota_X-1$. 

Here, we claim that $\psi: X\to Z$ is equidimensional. Suppose the contrary to prove this. Then { there exists an irreducible component $F'$ of a jumping fiber of} $\psi$, such as $\dim F'>\iota_X-1$. {Since we have $\text{Locus}(\cM)_F=X$, for a general point $x\in F'$, we may find an $\cM$-curve $C$ such that $x \in C$ and $F \cap C \neq \emptyset$. Since $x$ is a general point of $F'$, $C$ is contained in $F'$. } Thus we see that $F \cap F'$ is nonempty. {Then, by Lemma~\ref{lem:Serre}, we have 
$$
\dim X \geq \dim F +\dim F' >\left(\iota_X+1\right)+\left(\iota_X-1\right)=\dim X.
$$}
This is a contradiction. Consequently, we see that $\psi: X\to Z$ is equidimensional.
Remark that $\iota_X=\deg_{(-K_X)}\cM$; this yields that $\ell(R')=\iota_X$. Applying Theorem~\ref{them:HN13}, we see that $\psi$ is a $\P^{\iota_X-1}$-bundle.

{
Let $\tilde{E}\to E$ be the normalization of $E$ and $\tilde{\varphi}: \tilde{E} \to \tilde{W}$ be the morphism obtained by the Stein factorization of $\tilde{E} \to E \to {Y}$.
Since we have $\dim E+\dim F=\dim X + \ell(R)-1$, \cite[Theorem~1.4]{HNov13} implies $\tilde{E} \to \tilde{W}$ is a projective bundle in codimension one. From this, we obtain a surjective morphism ${\P^{\iota_X+1}\to Z}$. Applying \cite[Theorem~4.1]{Laz84}, it follows that $Z$ is isomorphic to $\P^{\iota_X+1}$. Then our assertion follows from Proposition~\ref{prop:bundle}.}

\if0
Let $\tilde{E}\to E$ be the normalization of $E$ and $\tilde{\varphi}: \tilde{E} \to \tilde{Z}$ be the morphism obtained by the Stein factorization of $\tilde{E} \to E \to {Y}$. Applying \cite[Theorem~1.4]{HNov13}, we obtain the following commutative diagram

\[
  \xymatrix{
   E' \ar[r]^{\mu} \ar[d]_{\varphi'}   & \tilde{E} \ar[r]^{} \ar[d]_{\tilde{\varphi}}   & E \ar@{^{(}->}[r] \ar[d]_{\varphi} & X \ar[r]^{\psi}   \ar[d]_{\varphi}  & Z    \\
   Z' \ar[r]^{\mu'}  &\tilde{Z} \ar[r]^{}    & \varphi(E)  \ar@{^{(}->}[r]  & Y &      }
\]
such that $\mu$ and $\mu'$ are birational, $E'$ and $Z'$ are smooth, and $\varphi': E'\to Z'$ is a $\P^{\iota_X+1}$-bundle. From this commutative diagram, we obtain a surjective morphism from $\{$P^{\iota_X+1}\to Z$}$. Applying \cite[Theorem~4.1]{Laz84}, it follows that $Z$ is isomorphic to $\P^{\iota_X+1}$. Then our assertion follows from Proposition~\ref{prop:bundle}.
\fi
\end{proof}

\subsection{The case where $X$ admits an elementary divisorial contraction} 
The purpose of this subsection is to prove the following:
\begin{theorem}\label{them:main2} Let $X$ be a smooth Fano variety with $\iota_X=\frac{n}{2}\geq 3$ and $\rho_X=2$. Assume that $X$ admits a divisorial contraction $\varphi: X\to Y$ of an extremal ray $R\subset \overline{NE}(X)$. { Then $X$ is isomorphic to one of the varieties in Theorem~\ref{them:main}.}

\if0
Then $X$ is isomorphic to one of the following:
\begin{enumerate}
\item $Bl_{\P^{\iota_X-1}}(\P^n)$, with $\P^{\iota_X-1}$ a linear subspace of $\P^n$;
\item $Bl_{\P^{\iota_X-1}}(Q^n)$, with $\P^{\iota_X-1}$ a linear subspace of $Q^n$;
\item $Bl_{Q^{\iota_X-1}}(Q^n)$, with $Q^{\iota_X-1}$ a smooth quadric of $Q^n$ not contained in a linear subspace of $Q^n$;
\item $\P(\cO_{\P^{\iota_X+1}}(1)\oplus\cO_{\P^{\iota_X+1}}^{\oplus \iota_X-1})$;
\item $\P(\cO_{\P^{\iota_X+1}}(2)\oplus\cO_{\P^{\iota_X+1}}^{\oplus \iota_X-1})$;
\item $\P(\cO_{Q^{\iota_X+1}}(1)\oplus\cO_{Q^{\iota_X+1}}^{\oplus \iota_X-1})$.
\end{enumerate}
\fi
\end{theorem}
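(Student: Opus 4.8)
The plan is to adapt the strategy of Theorem~\ref{them:main1}, exploiting that for a divisorial contraction the exceptional locus is a divisor rather than a codimension-two subset; this will produce a clean dichotomy between a blow-up structure and a projective bundle structure. First I would fix a minimal dominating family $\cM\subset\text{RatCurves}^n(X)$ and put $d:=\deg_{(-K_X)}\cM$. Exactly as in Theorem~\ref{them:main1}, Theorem~\ref{them:fano:min:fam} together with $\rho_X=2$ and $\iota_X=\frac{n}{2}\ge 3$ forces $d\le n-1<2\iota_X$, so $\cM$ is unsplit by Lemma~\ref{lem:degeneration:curves}, while $\iota_X\le d$ by definition of the pseudoindex. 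Since $\cM$ is dominating but the exceptional locus $E$ of $\varphi$ is a proper divisor, no $\cM$-curve lies in $R$; hence $[\cM]\notin R$, and as $\rho_X=2$ the $\cM$-curves are numerically independent of the curves contracted by $\varphi$. For an irreducible component $F$ of a non-trivial fiber, $\dim E=n-1$ and Proposition~\ref{prop:Ion:Wis} give $\dim F\ge\ell(R)\ge\iota_X$, whereas Proposition~\ref{prop:LocMY:dim} applied to $F$ gives $n\ge\dim\text{Locus}(\cM)_F\ge\dim F+d-1\ge\dim F+\iota_X-1$, whence $\dim F\le\iota_X+1$. Thus $\iota_X\le\ell(R)\le\dim F\le\iota_X+1$.

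I would then branch on whether $\varphi|_E$ is equidimensional with all fibers of dimension exactly $\ell(R)$. In that case Theorem~\ref{them:AO02} shows $\varphi$ is the blow-up of a smooth variety along a smooth centre $T$ of codimension $\ell(R)+1$, so $\dim T=n-1-\ell(R)$, and the hypothesis $\iota_X\ge\dim T+1$ of Theorem~\ref{them:AO:blup}, equivalent to $\iota_X+\ell(R)\ge n$, is automatic because $\ell(R)\ge\iota_X=\frac{n}{2}$. Invoking Theorem~\ref{them:AO:blup}, its two outputs involving two disjoint centres have $\rho_X=3$ and are excluded, and among the remaining blow-ups of $\P^n$ and $Q^n$ the constraint $\iota_X=\frac{n}{2}$, evaluated through the pseudoindex computations of Examples~\ref{eg:ps1} and \ref{eg:ps2}, singles out precisely the centre dimensions occurring in cases (i)--(iv) of Theorem~\ref{them:main}.

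In the complementary case some component $F_0$ of a fiber has $\dim F_0>\ell(R)$; the bounds above then force $\dim F_0=\iota_X+1$, $\ell(R)=\iota_X$, $d=\iota_X$ and $\text{Locus}(\cM)_{F_0}=X$. From here I would reproduce the argument of Theorem~\ref{them:main1} almost verbatim: by Lemma~\ref{lem:LocMY} the second extremal ray $R'$ contains every $\cM$-curve, its contraction $\psi\colon X\to Z$ is equidimensional of relative dimension $\iota_X-1$ with $\dim Z=\iota_X+1$ (equidimensionality being forced by $\text{Locus}(\cM)_{F_0}=X$ together with Lemma~\ref{lem:Serre}), and since $\ell(R')=\iota_X$ Theorem~\ref{them:HN13} makes $\psi$ a $\P^{\iota_X-1}$-bundle. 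To pin down $Z$ I would use Lemma~\ref{lem:Fano:bundle}: $Z$ is Fano with $\iota_Z\ge\iota_X$, and if $\iota_Z=\iota_X$ then Proposition~\ref{prop:bundle:iotaZ} gives $X\cong\P^{\iota_X-1}\times Z$, which admits no birational contraction and contradicts the existence of $\varphi$. Hence $\iota_Z\ge\iota_X+1=\dim Z$, so $Z\cong\P^{\iota_X+1}$ or $Z\cong Q^{\iota_X+1}$ by Theorem~\ref{them:fano:min:fam}, and Propositions~\ref{prop:bundle} and \ref{prop:bundle:Q} enumerate $X$; the product, the bundle carrying only a small contraction, and the spinor bundle of Remark~\ref{rem:spinor} are discarded for lacking a divisorial contraction, leaving cases (iv)--(vi).

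The main obstacle is the branching itself. Both regimes $\dim F=\ell(R)$ and $\dim F=\ell(R)+1$ genuinely occur among the target varieties — case (iv) is a blow-up with $\dim F=\ell(R)=\iota_X+1$, whereas cases (v) and (vi) are equidimensional bundles with $\dim F=\iota_X+1>\ell(R)=\iota_X$ — so the delicate point is to verify that equidimensionality \emph{with fibres of dimension precisely} $\ell(R)$, and not mere equidimensionality, is exactly the condition separating the blow-up branch (Theorem~\ref{them:AO02}) from the bundle branch, and that a single over-dimensional fiber already produces, through the bundle it generates, a variety whose divisorial contraction is itself equidimensional, so that no mixed configuration survives. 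The feature with no analogue in Theorem~\ref{them:main1} is that the base $Z$ may now be a quadric; I would resolve this not by a Lazarsfeld-type surjection from projective space but through the strict inequality $\iota_Z\ge\iota_X+1$ extracted from Proposition~\ref{prop:bundle:iotaZ}.
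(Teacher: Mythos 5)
Your argument is essentially sound and reaches the correct list, but it is organized genuinely differently from the paper's proof, so let me compare the two before pointing out the one real gap. The paper first proves a standalone lemma that the second elementary contraction $\psi\colon X\to Z$ is of fiber type (by contradiction: if $\psi$ is small, Theorem~\ref{them:main1} applies; if $\psi$ is divisorial, Theorems~\ref{them:AO02} and~\ref{them:AO:blup} apply), and only then, in Lemma~\ref{lem:dimF:l(R)}, splits according to whether every component of every fiber of $\varphi|_E$ has dimension exactly $\iota_X$: in that case $\ell(R)=\iota_X$ and Theorem~\ref{them:AO02} produces a blow-up with center of dimension exactly $\iota_X-1$, so only cases (i)--(iii) of Theorem~\ref{them:main} occur there, while in the opposite case $\psi$ is shown to be a $\P^{\iota_X-1}$-bundle, and case (iv) of Theorem~\ref{them:main} (which is $\P(\cO_{\P^{\iota_X+1}}(1)\oplus\cO_{\P^{\iota_X+1}}^{\oplus\iota_X-1})$) is recovered on the bundle side. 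Your dichotomy --- all fibers of dimension exactly $\ell(R)$ versus some component exceeding $\ell(R)$ --- instead routes case (iv) through the blow-up branch, since there $\ell(R)=\iota_X+1$. What your organization buys is that the paper's fiber-type lemma becomes unnecessary: your blow-up branch never mentions $\psi$ at all, and in your bundle branch the unsplit covering family $\cM$ has its class in $R'$, so $\psi$ contracts a covering family and is automatically of fiber type with $\dim Z=\iota_X+1$. Your endgame in the bundle branch (ruling out $\iota_Z=\iota_X$ via Proposition~\ref{prop:bundle:iotaZ} because the product has no birational contraction, then $Z\cong\P^{\iota_X+1}$ or $Q^{\iota_X+1}$ via Theorem~\ref{them:fano:min:fam}, then Propositions~\ref{prop:bundle} and~\ref{prop:bundle:Q} together with Remark~\ref{rem:spinor}) coincides with the paper's, and your identification of which target varieties fall into which branch is accurate.

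The gap is in your blow-up branch. When $\ell(R)=\iota_X+1$, the center produced by Theorem~\ref{them:AO02} has dimension $\frac{n}{2}-2$, and after removing the two $\rho_X=3$ items of Theorem~\ref{them:AO:blup} (note that its case (v) is not a two-center blow-up but a blow-up of $\P^1\times\P^{n-1}$ along one center; it still has $\rho_X=3$, so your exclusion stands), the theorem still allows $Bl_{\P^{n/2-2}}(Q^n)$ and $Bl_{Q^{n/2-2}}(Q^n)$ alongside $Bl_{\P^{n/2-2}}(\P^n)$. You assert that these are eliminated by ``the pseudoindex computations of Examples~\ref{eg:ps1} and~\ref{eg:ps2}'', but Example~\ref{eg:ps1} treats only quadric blow-ups with center of dimension $\frac{n}{2}-1$, and Example~\ref{eg:ps2} concerns blow-ups of $\P^n$; neither computes the pseudoindex of a quadric blow-up with center of dimension $\frac{n}{2}-2$. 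The exclusion is easy but must be supplied: for $X=Bl_T(Q^n)$ with $\dim T=t$, the canonical bundle formula gives $-K_X=n\varphi^*H-(n-t-1)E$, so the strict transform $\tilde\ell$ of a line of $Q^n$ meeting $T$ satisfies $-K_X\cdot\tilde\ell=n-(n-t-1)\,E\cdot\tilde\ell\le t+1$, which for $t=\frac{n}{2}-2$ gives $\iota_X\le\frac{n}{2}-1$, contradicting $\iota_X=\frac{n}{2}$. (The paper never needs this computation because its blow-up branch only ever produces centers of dimension $\iota_X-1$.) With that one verification added, your proof is complete.
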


Since $\rho_X=2$, there exists an extremal ray $R'\subset \overline{NE}(X)$ such that $\overline{NE}(X)= R+R'$. We denote by $C$ and $C'$ an extremal rational curve of $R$ and that of $R'$, respectively. Let $\psi:X \to Z$ be the contraction of $R'$.
We denote by $E$ the exceptional divisor of $\varphi$. Throughout this section, we will use these notations.
By the negativity lemma \cite[Lemma~3.39]{KM}, we have $E\cdot C<0$. Since $E$ is an effective divisor, a curve $C_0\subset X$ with $E\cdot C_0>0$ exists. For non-negative numbers $a, b$ such that $[C_0]=a[C]+b[C']\in N_1(X)$, we have 
$$
0<E\cdot C_0 =aE\cdot C+bE\cdot C'\leq bE\cdot C'.
$$
This implies that $E\cdot C'>0$. We now claim the following:
\begin{lemma} {The contraction} $\psi$ is of fiber type.
\end{lemma}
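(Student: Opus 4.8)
I would prove the lemma by contradiction, assuming that $\psi$ is birational; then $\psi$ is either small or divisorial, and I would rule out each possibility. The common input is the relation $E\cdot C'>0$ established just above: a general curve of $R'$ meets $E$, and since such curves cover $\Exc(\psi)$, we have $E\cap\Exc(\psi)\neq\emptyset$. Fixing a point $p\in E\cap\Exc(\psi)$, the fiber $F$ of $\varphi$ and the fiber $F'$ of $\psi$ through $p$ are both positive-dimensional and meet at $p$, with $F\subset E$. Since $\varphi$ is divisorial we have $\dim E=n-1$, so Proposition~\ref{prop:Ion:Wis} gives $\dim F\ge \ell(R)\ge\iota_X=\frac n2$ and likewise $\dim F'\ge\ell(R')\ge\frac n2$. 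The engine of the whole argument is then Lemma~\ref{lem:Serre} applied to $F\cap F'\ni p$, which gives $n\ge \dim F+\dim F'$.

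If $\psi$ is small, every component of $\Exc(\psi)$ has dimension at most $n-2$, so Proposition~\ref{prop:Ion:Wis} upgrades the bound to $\dim F'\ge \ell(R')+1\ge\frac n2+1$. Combined with $\dim F\ge\frac n2$ and Lemma~\ref{lem:Serre}, this yields $n\ge\frac n2+(\frac n2+1)=n+1$, a contradiction. Hence $\psi$ is not small.

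The divisorial case is the real difficulty, since here the two fiber bounds give only $n\ge\frac n2+\frac n2=n$; indeed, two divisorial contractions are perfectly consistent numerically on a Picard-number-two variety, so dimension counting alone cannot separate the two birational possibilities for $\psi$. The equality forces $\ell(R)=\ell(R')=\iota_X=\frac n2$ and $\dim F=\dim F'=\frac n2$. I would next check that these fiber dimensions are uniform: writing $E'=\Exc(\psi)$ for the exceptional divisor, one finds $E'\cdot C>0$, so $E'$ meets every fiber of $\varphi$, and re-running the Lemma~\ref{lem:Serre} estimate on an arbitrary $\varphi$-fiber forces all of them to have dimension exactly $\frac n2=\ell(R)$. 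Then Theorem~\ref{them:AO02} realizes $\varphi$ as the blow-up of a smooth $Y$ along a smooth center $T$ of codimension $\frac n2+1$, so $\dim T=\frac n2-1=\iota_X-1$.

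Finally I would invoke Theorem~\ref{them:AO:blup}, which applies precisely because $\iota_X=\dim T+1$. Since $\rho_X=2$ excludes the two blow-ups with disconnected center (case (iv) and $Bl_{\P^1\times\{p\}}(\P^1\times\P^{n-1})$, both of Picard number $3$), the variety $X$ must be one of $Bl_{\P^t}(\P^n)$, $Bl_{\P^t}(Q^n)$ or $Bl_{Q^t}(Q^n)$ with $t=\frac n2-1$. For each of these the second elementary contraction is the evident projective- or quadric-bundle projection, which is of fiber type, contradicting the assumption that $\psi$ is divisorial. Therefore $\psi$ is of fiber type. The hypothesis $\iota_X=\frac n2$ enters decisively only in this last case, through the blow-up classification; this is the step I expect to be the main obstacle, precisely because the numerics of the two-ray game do not by themselves preclude a second divisorial contraction.
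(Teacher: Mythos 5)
Your proof is correct, and it differs from the paper's in a genuinely useful way in the small-contraction case. The paper rules out a small $\psi$ by invoking its own Theorem~\ref{them:main1}: if $X$ admitted an elementary small contraction, then $X$ would be $\P(\cO_{\P^{\iota_X+1}}^{\oplus 2}(1)\oplus\cO_{\P^{\iota_X+1}}^{\oplus \iota_X-2})$, whose two elementary contractions are a projective bundle and a small contraction, leaving no room for the divisorial $\varphi$ --- so the paper's treatment of this case leans on the full classification from Section 4.1. Your argument is instead purely numerical and self-contained: divisoriality of $\varphi$ gives $\dim F\geq \ell(R)\geq \iota_X$ via Proposition~\ref{prop:Ion:Wis}, smallness of $\psi$ upgrades the fiber bound to $\dim F'\geq \ell(R')+1\geq \iota_X+1$, the positivity $E\cdot C'>0$ produces intersecting fiber components (take irreducible components through a point of $E\cap \Exc(\psi)$, which Lemma~\ref{lem:Serre} requires), and then $n\geq \dim F+\dim F'\geq n+1$ is absurd. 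In the divisorial case your argument is essentially the paper's up to a symmetric swap of roles: the paper uses $E\cdot C'>0$ to show $\psi|_{E'}$ is equidimensional of relative dimension $\iota_X$ and applies Theorem~\ref{them:AO02} to $\psi$, whereas you use $E'\cdot C>0$ to equidimensionalize $\varphi$ and apply it to $\varphi$; both routes then feed into Theorem~\ref{them:AO:blup}, discard cases (iv) and (v) (you make the Picard-number reason explicit, where the paper leaves it implicit), and conclude that the surviving blow-ups $Bl_{\P^t}(\P^n)$, $Bl_{\P^t}(Q^n)$, $Bl_{Q^t}(Q^n)$ admit only one birational elementary contraction, contradicting the assumption that both $\varphi$ and $\psi$ are birational.
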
  
\begin{proof} Assume the contrary, that is, $\psi$ is of birational type. If $\psi$ is a small contraction, then $X$ is isomorphic to the variety as in Theorem~\ref{them:main1}. In this case, $\psi$ is of fiber type; this is a contradiction. Thus, $\psi$ is a divisorial contraction. 
Let $E'$ be the exceptional divisor of $\psi$. We denote by $F$ and $F'$ an irreducible component of a nontrivial fiber of $\varphi$ and that of $\psi$, respectively. By Proposition~\ref{prop:Ion:Wis}, we have 
\begin{eqnarray}\label{eq:F}
\dim F \geq \ell(R)\geq \iota_X\quad\text{and}\quad \dim F'\geq \ell(R')\geq\iota_X.
\end{eqnarray}
Since $E\cdot C'>0$, $E\cap F'$ is nonempty. We claim that $\psi|_{E'}$ is an equidimensional morphism of relative dimension $\iota_X$. If not, there exists an irreducible component $F'_0$ of a nontrivial fiber of $\psi$ such that $\dim F'_0 >\iota_X$. Since $E\cap F'_0$ is nonempty, we may find an irreducible component $F_0$ of a nontrivial fiber of $\varphi$ such that $F_0 \cap F_0'$ is nonempty. Then we have 
$$
\dim F_0\cap F_0' \geq \dim F_0+\dim F_0'-n >0.
$$
This is a contradiction. Therefore, $\psi|_{E'}$ is an equidimensional morphism of relative dimension $\iota_X$.
By the inequality (\ref{eq:F}), we have $\ell(R')=\iota_X$. Applying Theorem~\ref{them:AO02}, we see that $\psi: X\to Z$ is the blow-up of a smooth projective variety $Z$ along a smooth variety of codimension $\iota_X+1$. Then $X$ is isomorphic to any of the varieties appearing in (i), (ii), and (iii) of Theorem~\ref{them:AO:blup}, but these varieties have only one birational contraction. Consequently, we see that $\psi$ is of fiber type.
\end{proof}

{Subsequently, an irreducible component of a non-trivial fibre of $\varphi$ and that of $\psi$ are denoted by $F$ and $F'$, respectively. Any fibre of $\psi$ whose dimension is equal to $\dim X - \dim Z$ is denoted by $F'_{\mathrm{gen}}$, and any fibre of $\varphi|_{E}$ whose dimension is equal to $\dim E - \dim \varphi(E)$ is denoted by $F_{\mathrm{gen}}$.}

Since $\psi|_F: F\to Z$ is finite, {applying Proposition~\ref{prop:Ion:Wis} for $\varphi$, we have} 
\begin{eqnarray}\label{ineq:F}
\dim Z\geq \dim F\geq\dim F_{\rm gen}\geq \ell(R)\geq \iota_X.
\end{eqnarray}
{Applying Proposition~\ref{prop:Ion:Wis} for $\psi$,} we obtain 
\begin{eqnarray}\label{ineq:F'}
\dim F'\geq \dim F_{\rm gen}'\geq \ell(R')-1\geq \iota_X-1.
\end{eqnarray}
Moreover, we have 
\begin{eqnarray}\label{ineq:X}
n=\dim F_{\rm gen}'+\dim Z\geq \left(\iota_X-1\right)+\iota_X=n-1.
\end{eqnarray}
This implies 
\begin{eqnarray}\label{ineq:F'Z}
\dim F_{\rm gen}'=\iota_X~\text{or}~\iota_X-1.
\end{eqnarray}
\if0
\begin{eqnarray}\label{ineq:F'Z}
\left(\dim F_{\rm gen}', \dim Z\right)=\left(\iota_X, \iota_X\right)~\text{or}~\left(\iota_X-1, \iota_X+1\right).
\end{eqnarray}
\fi

\begin{lemma}\label{lem:dimF:l(R)} {One of the following holds}:
\begin{enumerate} 
\item The contraction $\varphi:X \to Y$ is the blow-up of a smooth projective variety $Y$ along {a} smooth subvariety $W$ of dimension $\iota_X-1$; 
\item The contraction $\psi: X \to Z$ is a $\P^{\iota_X-1}$-bundle. 
\end{enumerate}
\end{lemma}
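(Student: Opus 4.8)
The plan is to branch on the dimension of a general fibre of $\psi$, which by (\ref{ineq:F'Z}) equals either $\iota_X$ or $\iota_X-1$, and to show that the first value forces alternative \textup{(i)} and the second forces \textup{(ii)} (with a fallback to \textup{(i)}). Suppose first that $\dim F'_{\mathrm{gen}}=\iota_X$, so $\dim Z=n-\iota_X=\iota_X$. Every fibre of $\varphi$ maps finitely into $Z$, since its curves lie in $R$ and $\psi$ contracts only $R'$; hence the chain (\ref{ineq:F}) collapses, giving $\iota_X\le\ell(R)\le\dim F\le\dim Z=\iota_X$ for every nontrivial fibre $F$. Thus $\varphi$ is equidimensional with all nontrivial fibres of dimension $\iota_X=\ell(R)$, and Theorem~\ref{them:AO02} exhibits $X$ as the blow-up of a smooth variety along a smooth centre of codimension $\ell(R)+1=\iota_X+1$, that is, of dimension $\iota_X-1$. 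This is \textup{(i)}.

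Now suppose $\dim F'_{\mathrm{gen}}=\iota_X-1$, so $\dim Z=\iota_X+1$ and, by (\ref{ineq:F'}), $\ell(R')=\iota_X$. Here I aim for \textup{(ii)}, and the first step is a uniform upper bound on the fibres of $\psi$. Since $E\cdot C'>0$, every fibre $F'$ of $\psi$ meets $E$; choosing $x\in E\cap F'$ and letting $F$ be the nontrivial fibre of $\varphi$ through $x$, Proposition~\ref{prop:Ion:Wis} gives $\dim F\ge\ell(R)\ge\iota_X$, while Lemma~\ref{lem:Serre} applied to $F\cap F'\neq\emptyset$ yields $\dim F'\le n-\dim F\le\iota_X$. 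So every fibre of $\psi$ has dimension $\iota_X-1$ or $\iota_X$. If there is no jumping fibre, i.e. $\psi$ is equidimensional with all fibres of dimension $\iota_X-1=\ell(R')-1$, then Theorem~\ref{them:HN13} shows $\psi$ is a $\P^{\iota_X-1}$-bundle, which is \textup{(ii)}.

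It remains to handle a jumping fibre $F'_0$ of $\psi$ with $\dim F'_0=\iota_X$, and this is the crux. I would \emph{not} try to exclude the jump directly; instead I transfer equidimensionality to $\varphi$. The only fibre dimensions available to $\varphi$ are $\iota_X$ and $\iota_X+1$, bounded below by $\ell(R)\ge\iota_X$ and above by $\dim Z=\iota_X+1$. Suppose $\varphi$ had a fibre $F_1$ of dimension $\iota_X+1$. Its curves lie in $R$, so $\psi|_{F_1}$ is finite, forcing $\psi(F_1)=Z$; in particular $F_1$ meets $F'_0$. But then Lemma~\ref{lem:Serre} gives the impossible $n\ge\dim F_1+\dim F'_0=(\iota_X+1)+\iota_X=n+1$. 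Hence all fibres of $\varphi$ have dimension $\iota_X=\ell(R)$, and Theorem~\ref{them:AO02} once more yields \textup{(i)}.

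The principal obstacle is exactly this last step: controlling a potential jump of $\psi$ when its general fibre already has the minimal dimension $\iota_X-1$. The decisive observation is that $E\cdot C'>0$, combined with the Ionescu--Wi\'sniewski inequality and Lemma~\ref{lem:Serre}, pins every $\psi$-fibre to dimension at most $\iota_X$, so a jump can only raise the dimension by one; a maximal-dimensional fibre of $\varphi$ would then be finite and surjective onto $Z$ and meet the jumping fibre in a curve belonging to both $R$ and $R'$, which is absurd. This is what lets the jumping case collapse into the blow-up alternative rather than obstructing the dichotomy.
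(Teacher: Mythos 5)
Your first case ($\dim F'_{\mathrm{gen}}=\iota_X$, hence $\dim Z=\iota_X$), your uniform upper bound $\dim F'\leq \iota_X$ via $E\cdot C'>0$, and the equidimensional subcase handled by Theorem~\ref{them:HN13} are all correct. The gap is in the jumping subcase, at the step you yourself call the crux. From finiteness of $\psi|_{F_1}$ and $\dim F_1=\iota_X+1=\dim Z$ you correctly get $\psi(F_1)=Z$, but you then assert that ``in particular $F_1$ meets $F'_0$.'' This does not follow: $F'_0$ is only an \emph{irreducible component} of the fibre $\psi^{-1}(z_0)$, where $z_0:=\psi(F'_0)$, and surjectivity of $\psi|_{F_1}$ only produces a point of $F_1$ lying somewhere in $\psi^{-1}(z_0)$. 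That point may lie in a different irreducible component of this connected but possibly reducible fibre. Since every component of every fibre of the fibre-type contraction $\psi$ has dimension at least $\ell(R')-1=\iota_X-1$ by Proposition~\ref{prop:Ion:Wis}, hitting such a small component gives, via Lemma~\ref{lem:Serre}, only the harmless inequality $n\geq(\iota_X+1)+(\iota_X-1)=n$ --- no contradiction. So the argument that is supposed to collapse the jumping case into alternative (i) does not go through as written.

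This is exactly the point where the paper deploys the machinery you deliberately avoided. In the paper's proof, once there exist a component $F_0$ of a $\varphi|_E$-fibre with $\dim F_0\geq\iota_X+1$ and a component $F'_0$ of a $\psi$-fibre with $\dim F'_0\geq\iota_X$, one takes a minimal dominating family $\mathcal{M}$ of $\psi$-contracted rational curves, shows (as in Theorem~\ref{them:main1}) that $\mathcal{M}$ is unsplit and that ${\rm Locus}(\mathcal{M})_{F_0}=X$ (via Proposition~\ref{prop:LocMY:dim}, using $\dim F_0\geq\iota_X+1$ and $\deg_{(-K_X)}\mathcal{M}\geq\iota_X$), and then connects a \emph{general} point $x\in F'_0$ to $F_0$ by an $\mathcal{M}$-curve $C$. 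Because $C$ is irreducible, is contracted by $\psi$, and passes through a point of $F'_0$ lying on no other component of $\psi^{-1}(z_0)$, it must be contained in $F'_0$; hence $F_0\cap F'_0\neq\emptyset$, and only then does Lemma~\ref{lem:Serre} yield the contradiction $n\geq(\iota_X+1)+\iota_X=n+1$. The generality of the point is what pins the connecting curve, and thus the intersection, to the large component $F'_0$ --- precisely the control your surjectivity argument lacks. To repair your proof you would need this family-theoretic connection (or some substitute forcing a maximal-dimensional $\varphi$-fibre to meet the component $F'_0$ itself rather than merely its fibre).
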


\begin{proof} {If $\dim F = \iota_X$ for any irreducible component $F$ of any fiber of $\varphi|_E$, then $\varphi|_E$ is equidimensional and $\dim F = \ell(R)$ by (\ref{ineq:F})}, so by Theorem~\ref{them:AO02} we are in case (i). Therefore, we may assume that there exists an irreducible component $F_0$ of a fiber of $\varphi|_E$ with $\dim F_0 \geq \iota_X + 1$.
Then, since $\psi|_{F_0} : F_0 \to Z$ is finite, it follows that
\[
\dim Z \geq \dim F_0 \geq \iota_X + 1. \tag{1}
\]
{Now, if $\dim F' = \iota_X - 1$ for any irreducible component $F'$ of any fiber of $\psi$, then $\psi$ is equidimensional and $\dim F' = \ell(R') - 1$ by (\ref{ineq:F'})}, so by Theorem~\ref{them:HN13}  we are in case (ii). Hence we may assume that there exists an irreducible component $F'_0$ of a fiber of $\psi$ with $\dim F'_0 \geq \iota_X$.
Let $\mathcal{M} \subset {\rm RatCurves}^n(X)$ be a dominating family of $\psi$-contracted rational curves of minimal anticanonical degree. By the same argument as in Theorem~\ref{them:main1}, we see that $\mathcal{M}$ is unsplit and that $\mathrm{Locus}(\mathcal{M})_{F_0} = X$. Moreover, by the same argument as in Theorem~\ref{them:main1}, we conclude that $F_0 \cap F'_0 \neq \emptyset$. 
{Then, by Lemma~\ref{lem:Serre}, we have $$\dim X \ge \dim F_0 + \dim F'_0 > (\iota_X+1) + \iota_X = \dim X+1,$$ a contradiction.}
\end{proof}

\begin{proof}[{Proof of Theorem~\ref{them:main2}}] We see that $\varphi$ and $\psi$ satisfy either (i) or (ii) of Lemma~\ref{lem:dimF:l(R)}.
In the first case, Theorem~\ref{them:AO:blup} yields that $X$ is isomorphic to one of the varieties as in (i)-(iii) of Theorem~\ref{them:main2}. So, assume that $\psi: X \to Z$ is a $\P^{\iota_X-1}$-bundle. {By Lemma~\ref{lem:Fano:bundle}, $Z$ is a smooth Fano variety with $\iota_Z \geq \iota_X$. Moreover, since $\dim Z=\dim X-(\iota_X-1)$, it follows that  $\iota_Z\leq \dim Z+1=\iota_X+2$.} If $\iota_Z=\iota_X+2$ or $\iota_X+1$, then Theorem~\ref{them:fano:min:fam} implies that $Z$ is isomorphic to $\P^{\iota_X+1}$ or $Q^{\iota_X+1}$, respectively. In these cases, by Proposition~\ref{prop:bundle} and Proposition~\ref{prop:bundle:Q}, $X$ is isomorphic to one of the varieties as in (iv)-(vi) of Theorem~\ref{them:main2}. Finally, Proposition~\ref{prop:bundle:iotaZ} implies that the case $\iota_Z=\iota_X$ does not occur. 
\end{proof}

\subsection{Conclusion} 
\begin{proof}[{Proof of Theorem~\ref{them:main}}] Our assertion follows from Example~\ref{eg:ps1}, Example~\ref{eg:ps2}, Lemma~\ref{lem:n=4}, Lemma~\ref{lem:pic3},  Theorem~\ref{them:main1}, and Theorem~\ref{them:main2}.
\end{proof}

\subsection*{Acknowledgments} The author would like to express sincere gratitude to the anonymous referee for carefully reading the manuscript and providing valuable comments that greatly improved this work.
The author is also deeply grateful to Professor Taku Suzuki for reading the manuscript and offering valuable suggestions; in particular, he kindly pointed out Remark \ref{rem:suzuki}.

Part of the results presented in this paper were obtained during the author's stay at Université Côte d'Azur. The author wishes to express his deepest gratitude to Professor Andreas Höring for hosting him during this period. The author is also profoundly grateful to Université Côte d'Azur for providing a stimulating and welcoming academic environment. The assistance and hospitality of the faculty and staff were greatly appreciated, making the author's stay both productive and memorable.

The author would like to express gratitude to the American Institute of Mathematics (AIM) and the participants of the AIM workshop {\it Higher-dimensional log Calabi-Yau pairs}, where topics related to this research were discussed. The discussions and feedback from the workshop significantly influenced and contributed to the development of this paper.

\if0
\subsection*{Conflict of Interest.} The author has no conflicts of interest directly relevant to the content of this article.

\subsection*{Data availability.} Data sharing not applicable to this article as no data sets were generated or analyzed during the current study.
\fi

\bibliographystyle{plain} 
\bibliography{biblio}
\end{document}
